\newtheorem{theorem}{Theorem}[section]
\newtheorem{lemma}[theorem]{Lemma}
\newtheorem{cor}[theorem]{Corollary}
\newtheorem*{thm}{Main Theorem}
\newtheorem{proposition}[theorem]{Proposition}
\theoremstyle{definition}
\newtheorem{definition}[theorem]{Definition}
\newtheorem{example}[theorem]{Example}
\theoremstyle{remark}
\newtheorem{remark}[theorem]{Remark}
\numberwithin{equation}{section}
\DeclareMathOperator{\ind}{ind}
\newcommand{\Sym}{{\text {Sym}}}
\begin{document}

\title{Small Cancellation for Random Branched Covers of Groups}

\author[H.~Cho]{Hyeran Cho}
\address{Tufts University, 177 College Ave, Medford, MA 02155}
\email{hcho13@tufts.edu}

\author[J.~Lafont]{Jean-Fran\c{c}ois Lafont}
\address{The Ohio State University, 231 W. 18th Ave., Columbus OH 43210}
\email{lafont.1@osu.edu}

\author[R.~Skipper]{Rachel Skipper}
\address{University of Utah, 155 S 1400 E, Salt Lake City, UT 84112}
\email{rachel.skipper@utah.edu}


\date{\today.}

\begin{abstract}
We construct a random model for an $n$-fold branched cover of a finite acceptable $2$-complex $X$. This includes presentation $2$-complexes for finitely presented groups satisfying some mild conditions. For any $\lambda >0$, we show that as $n$ goes to infinity, a random branched cover asymptotically almost surely is homotopy equivalent to a $2$-complex satisfying geometric small cancellation $C'(\lambda)$. As a consequence the fundamental group of a random branched cover is asymptotically almost surely Gromov hyperbolic and has small cohomological dimension.
\end{abstract}

\maketitle

\section{Introduction}\label{Section:Introduction}

The probabilistic method was originally pioneered by Erd\"os, and was used as a non-constructive approach to showing the existence of interesting examples in combinatorics and graph theory, see e.g. the classic text by Alon—Spencer \cite{alon-spencer}. The method has been particularly effective in graph theory, with random graphs having evolved into its own field of study. Random models have since been developed to study higher dimensional simplicial complexes (Kahle \cite{Kahle2014}), random closed surfaces (Brooks—Makover \cite{Brooks-Makover}), and random $3$-manifolds (Dunfield—Thurston \cite{Dunfield-Thurston}).  In the late 1980s, Gromov launched the study of random groups. The two main models for random groups are the density model and the few relator model, see Gromov \cite{Gromov-asymptotic-invariants} and the survey article by Ollivier \cite{Ollivier-2005}. 

A common theme in these approaches is that a space is built by attaching spaces together via a random process. In the case of random graphs or simplicial complexes, edges or simplices are added to a vertex set at random. In the setting of random surfaces or $3$-manifolds, triangles or handlebodies are glued together via a suitable random process. In the setting of random groups, one can think of relators as $2$-cells being randomly attached to a bouquet of circles, with the random group being the fundamental group of the resulting $2$-complex.

A different topological construction that is commonly used in low-dimensional topology is that of branched covers. All closed oriented surfaces can be realized as branched covers over the sphere, a fact that is also true for $3$-manifolds (Hilden \cite{hilden} and Montesinos \cite{montesinos}). Branched covers have also been a source of many interesting examples in the geometry of negatively curved manifolds, see e.g. Gromov—Thurston \cite{gromov-thurston}, Fine—Premoselli \cite{fine-premoselli}, Stover—Toledo \cite{stover-toledo}, Minemyer \cite{Min25}, Guenancia-Hamenst\"{a}dt \cite{GH25}. From this viewpoint, it is natural to look for a random model for branched covers. In the present paper, we construct a random model for branched covers of finite polygonal $2$-complexes and prove:

\begin{thm}\label{thm:main theorem}
    Let $X$ be an acceptable finite polygonal $2$-complex, and $X(\sigma)$ denote an $n$-fold random branched cover of $X$. Then for any fixed $\lambda >0$, the branched cover $X(\sigma)$ is asymptotically almost surely homotopy equivalent to a $2$-complex satisfying geometric $C'(\lambda)$-small cancellation.
\end{thm}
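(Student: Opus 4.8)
The plan is to verify the defining inequality of geometric $C'(\lambda)$-small cancellation directly on a polygonal complex homotopy equivalent to $X(\sigma)$, by separately controlling the two quantities that enter it: the lengths of pieces and the perimeters of $2$-cells. The starting point is the combinatorial description of $X(\sigma)$: each $2$-cell is obtained by lifting one of the finitely many attaching polygons of $X$ to the induced $S_n$-cover $\widehat{X^{(1)}}$ of the $1$-skeleton, so its boundary cycle is a lift of a cyclic power $w_i^{\ell}$ of the corresponding attaching word $w_i$, where $\ell$ ranges over the cycle lengths of the monodromy permutation $\sigma_{w_i}$ determined by $\sigma$. In particular the perimeters of the $2$-cells of $X(\sigma)$ lie in the finite set $\{\ell\,|w_i| : 1\le i\le m,\ \ell \text{ a cycle length of } \sigma_{w_i}\}$. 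The argument then splits into a deterministic step (pieces are always short) and a probabilistic step (asymptotically almost surely no $2$-cell is short); together these force $C'(\lambda)$.

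For the deterministic step, a piece $p$ in $X(\sigma)$ — a path in $\widehat{X^{(1)}}$ carried as a sub-arc by the boundary cycles of two $2$-cells, or by one boundary cycle in two genuinely different ways — is a single path whose two carrier sub-arcs necessarily share an initial vertex; projecting to $X^{(1)}$, its image is simultaneously a factor of the bi-infinite periodic words $w_i^{\infty}$ and $w_{i'}^{\infty}$, at a nonzero offset when $i=i'$. The acceptability hypotheses on $X$ (notably: the attaching maps are immersions, no attaching word is a proper power, and distinct attaching words are non-commensurable), combined with the Fine--Wilf periodicity lemma, then produce a constant $M=M(X)$, independent of $n$ and of $\sigma$, bounding the length of every piece. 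Consequently $C'(\lambda)$ can fail only on account of a $2$-cell of perimeter at most $M/\lambda$: if every $2$-cell $R$ of $X(\sigma)$ has $|\partial R| > M/\lambda$, then every piece $p$ carried by $R$ satisfies $|p|\le M < \lambda\,|\partial R|$, which is exactly the $C'(\lambda)$ condition.

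For the probabilistic step it therefore suffices to show that asymptotically almost surely every monodromy permutation $\sigma_{w_i}$ has all of its cycles longer than the fixed threshold $T:=M/(\lambda\cdot \min_i |w_i|)$ — equivalently, that $X(\sigma)$ has no short $2$-cell — or, more conservatively, that asymptotically almost surely no short $2$-cell is carried through a piece of relative length at least $\lambda$. I would prove this by a first-moment argument. The potential short $2$-cells, and the potential long overlaps they could carry, are indexed by a finite list of combinatorial types over the fixed complex $X$ (a choice of $w_i$, a cycle length $\ell\le T$, and a description of how the lifted loop closes up and overlaps another lifted loop), together with $O(n)$ choices of a basepoint vertex of $\widehat{X^{(1)}}$; for each such configuration, the event that the random monodromy realizes it is a conjunction of coincidences whose probability, under the distribution defining the random branched cover, decays in $n$ fast enough to beat the polynomial count after summing. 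Taking the union over the finitely many types and over $i$ then gives the claim.

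I expect the probabilistic step to be the main obstacle: it is exactly where the choice of random model enters, and it requires a quantitative equidistribution (or concentration) estimate for the cycle structure that the random monodromy induces on each of the finitely many attaching words of $X$, controlling in particular the number of short cycles of the $\sigma_{w_i}$ and the number of short repeated sub-arcs of the lifted boundary loops. By comparison, the deterministic step and the final assembly are essentially bookkeeping, although some care is needed to replace the branched cover $X(\sigma)$ — whose cell structure near the branch locus need not be literally polygonal — by a homotopy-equivalent polygonal complex, collapsing or discarding a bounded amount of degenerate structure, so that geometric $C'(\lambda)$ can be formulated and checked in the polygonal category.
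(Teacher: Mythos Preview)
Your deterministic step is essentially the paper's: overlaps in $X(\sigma)$ project to overlaps in $X$, and acceptability plus Fine--Wilf gives a uniform bound $\mathcal O$ on their length, independent of $n$ and $\sigma$. That part is fine.

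The probabilistic step, however, has a genuine gap. It is \emph{not} true that asymptotically almost surely every $\sigma_{w_i}$ has all cycles longer than your threshold $T$, nor is the ``more conservative'' variant true. For a non-power word $w$, the expected number of $k$-cycles of $w(\sigma)$ in $\Sym(n)$ converges to $1/k$ as $n\to\infty$ (this is the Hanany--Puder estimate the paper cites); in particular the expected number of fixed points tends to $1$, and the number of short cycles converges in distribution to a nondegenerate Poisson, not to zero. So with probability bounded away from $0$ there \emph{is} an index-$1$ lift $\bar D$ of some $D_i$, and by the lifting lemma its overlap ratio equals that of $D_i$ in $X$. If $X$ itself is not $C'(\lambda)$ (the interesting case), such a $\bar D$ is a $2$-cell of $X(\sigma)$ that genuinely fails $C'(\lambda)$: the long overlap of $D_i$ with some $D_j$ lifts to a long piece between $\bar D$ and some lift of $D_j$. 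Your first-moment sketch breaks precisely here---the ``coincidence'' that $w_i(\sigma)$ has a short cycle is a single equation among permutations, and its probability is $\Theta(1/n)$ per basepoint, so summing over $n$ basepoints gives $\Theta(1)$, not $o(1)$.

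The paper's resolution is not bookkeeping but an additional construction. One shows that a.a.s.\ the short disks are (i) bounded in number, (ii) embedded, and (iii) pairwise disjoint; then one \emph{collapses} each short disk to a point, obtaining a homotopy-equivalent complex $Y(\sigma)$. The delicate point is that collapsing can concatenate overlaps in $X(\sigma)$ into longer overlaps in $Y(\sigma)$, so one must choose a second threshold $K\gg I$ and verify that large disks, after collapsing, still have overlap ratio $<\lambda$; this is where the bound on the \emph{number} of short disks is used. Your closing remark about ``collapsing or discarding a bounded amount of degenerate structure'' hints in the right direction, but the structure being collapsed is not degenerate cell structure near the branch locus---it is the short $2$-cells themselves, and controlling the effect of that collapse on pieces is the heart of the argument.
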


By a {\it polygonal} $2$-complex, we mean a $2$-dimensional CW-complex where the attaching maps are particularly simple, as described here. The $1$-skeleton is metrized by having each edge of length one and having a prescribed orientation. The $2$-cells are identified with disks scaled so that their perimeter is an integer. We then subdivide the boundary of the $2$-cell into consecutive intervals of length one, and the attaching map is required to map each of these intervals isometrically onto a single edge of the $1$-skeleton. For these CW-complexes, the attaching maps for each disk can be described just be enumerating the sequence of edges and their orientation traversed in the $1$-skeleton. Note that we are allowing the possibility of a disk attaching to a single edge loop, or to a pair of edges (so paths of combinatorial length one or two).

By an {\it acceptable} polygonal $2$-complex, we mean one which satisfies the following additional conditions:
\begin{enumerate}
\item the $1$-skeleton $X^{(1)}$ has fundamental group of rank at least two;
\item the $2$-cells have attaching maps that are not proper powers in $\pi_1(X^{(1)})$ (so in particular, are non-trivial), and that are pairwise non-homotopic.
\end{enumerate}
It is easy to see that the Main Theorem {\bf fails}\footnote{ More precisely, the theorem holds trivially when the rank is one (see Remark \ref{rmk-one-generator}), and fails when two attaching maps are homotopic (these create $\pi_2$, see Lemma \ref{lem:pi2}). In the presence of proper powers, a positive proportion of the branched covers produced will have non-trivial $\pi_2$, see Section \ref{subsec:non-acceptable}.} 
for non-acceptable $2$-complexes, so our hypotheses are actually necessary.

It is well-known that the $C'(1/6)$-small cancellation property has strong geoemetric consequences, e.g. \cite{gromov2003random}, \cite{wise-small-cancellation}. As a result, the $\lambda=1/6$ case of our main theorem immediately implies the following:

\begin{cor}
    Let $X$ be an acceptable finite $2$-complex, and let $X(\sigma)$ be a $n$-fold random branched cover of $X$. Then we have asymptotically almost surely the following properties hold:
    \begin{itemize}
        \item $X(\sigma)$ is aspherical, hence an Eilenberg--MacLane space $K(\pi_1(X(\sigma)),1)$;\item $\pi_1(X(\sigma))$ is Gromov hyperbolic and cubulable;
        \item $\pi_1(X(\sigma))$ is torsion-free, and has cohomological dimension $\leq 2$
    \end{itemize}
\end{cor}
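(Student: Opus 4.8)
The plan is to derive the corollary directly from the $\lambda = 1/6$ case of the Main Theorem together with the classical consequences of $C'(1/6)$ small cancellation. By the Main Theorem, asymptotically almost surely $X(\sigma)$ is homotopy equivalent to a finite $2$-complex $Y$ satisfying geometric $C'(1/6)$-small cancellation (finiteness is automatic, since $X(\sigma)$ is an $n$-fold cover of the finite complex $X$). A homotopy equivalence preserves the fundamental group and the property of being aspherical, so it suffices to establish each bullet for $Y$ and $\pi_1(Y)$ and then transport the conclusions back across $X(\sigma) \simeq Y$.

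For asphericity I would invoke the classical fact -- going back to Lyndon, and recovered via the nonpositive-curvature viewpoint in \cite{wise-small-cancellation} and \cite{gromov2003random} -- that a $C'(1/6)$ $2$-complex is aspherical; hence $Y$ is a $K(\pi_1(Y),1)$. Since $Y$ is two-dimensional, $\pi_1(Y)$ then has cohomological dimension at most $2$, and a group of finite cohomological dimension is torsion-free (a nontrivial finite subgroup would force infinite cohomological dimension). For Gromov hyperbolicity, a finite $C'(1/6)$ presentation complex satisfies Dehn's algorithm, hence a linear isoperimetric inequality, so $\pi_1(Y)$ is hyperbolic (see \cite{gromov2003random}). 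Cubulability is Wise's theorem that a group with a finite $C'(1/6)$ presentation acts properly and cocompactly on a CAT(0) cube complex \cite{wise-small-cancellation}. Pulling all of this back along the homotopy equivalence then yields the three bullets for $X(\sigma)$ and $\pi_1(X(\sigma))$.

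The only point requiring genuine care is bookkeeping: one must confirm that the \emph{geometric} $C'(\lambda)$ condition output by the Main Theorem is strong enough to feed into the classical statements above -- in particular that $Y$ can be taken finite and that its $2$-cells play the role of relators in an honest finite presentation, so that ``Dehn's algorithm'' and ``cocompact cubulation'' are meaningful -- and that the acceptability hypotheses on $X$ (rank of $\pi_1(X^{(1)})$ at least two, attaching maps nontrivial, not proper powers, and pairwise non-homotopic) exclude the degenerate cases flagged in the introduction. I do not expect a substantive obstacle here; once the Main Theorem is available, the corollary is essentially a dictionary translation of standard small cancellation theory.
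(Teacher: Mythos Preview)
Your proposal is correct and mirrors the paper's own treatment: the paper does not give a separate proof of this corollary but simply states that it follows immediately from the $\lambda=1/6$ case of the Main Theorem together with the standard consequences of $C'(1/6)$ small cancellation, citing \cite{gromov2003random} and \cite{wise-small-cancellation}. Your write-up just makes explicit the dictionary translation (asphericity $\Rightarrow$ $K(\pi,1)$ and cohomological dimension $\leq 2$ $\Rightarrow$ torsion-free, linear isoperimetric inequality $\Rightarrow$ hyperbolic, Wise $\Rightarrow$ cubulable) that the paper leaves to the reader.
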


It is perhaps worth emphasizing that our model is qualitatively very different from models previously considered in the literature. Let us highlight a few of the key differences:
\begin{itemize}
    \item in our model, the number of generators for the random groups is growing to infinity as $n$ increases. In previous models, the number of generators was always fixed.
    \item in our model, the groups that are produced {\it virtually surject} to the initial group. In previous models, the groups produced are typically quotients of the initial group.
    \item in our model, the key probabilistic ingredient are asymptotic properties of random elements in finite symmetric groups. These do not appear to feature in any of the previous models. 
\end{itemize}
Thus while the statements of our results are similar to those known for other models, the methods and context are quite different.

In Section 2, we will briefly review some basic notions related to branched covers and to small cancellation. In Section 3 we will describe our model for random branched coverings.

\subsection*{Acknowledgments} Cho and Lafont were partially supported by the NSF, under grant DMS-2407438. Skipper was partially supported by the NSF, under grants DMS-2343739 and DMS-2506840, and the European Research Council (ERC), under the European Union’s Horizon 2020 research and innovation program (grant agreement No.725773).
The authors would like to thank the following colleagues for insightful conversations and/or helpful comments: Laurent Bartholdi, Francesco Fournier-Facio, Jingyin Huang, Matthew Kahle, Ilya Kapovich, Olga Kharlampovich, MurphyKate Montee, Denis Osin, Tim Riley, Genevieve Walsh, and Matthew Zaremsky. The authors would also like to thank Doron Puder for comments on an earlier draft that helped to unify the main construction.

\section{Preliminaries}\label{Section:Prelim}

\subsection{Branched Coverings}

Let us recall the notion of branched covering of smooth manifolds (see e.g. \cite{gompf20234}). 

\begin{definition}\textbf{($n$-fold branched covering of manifolds)}\label{def-branched-cover}
    Given a pair of smooth $k$-manifolds $X^k$, $Y^k$, an $n$-fold branched (or ramified) covering is a smooth, proper map $f:X^k\rightarrow Y^k$ exhibiting some particularly simple local form. The critical set $B^{k-2}\subset Y$ is called the branch locus, and we require that it is a smoothly embedded codimension two submanifold. Moreover, $f|_{X\setminus f^{-1}(B)}:X\setminus f^{-1}(B)\rightarrow Y\setminus B$ is a covering map of degree $n$, and for each $p\in f^{-1}(B)$ there are local coordinate charts $U,V\rightarrow\mathbb{C}\times\mathbb{R}^{k-2}$ about $p,f(p)$ on which $f$ is given by $(z,x)\mapsto (z^m,x)$ for some positive integer $m$ called the branching index of $f$ at $p$. 
\end{definition}

Notice that, when restricted to the branching locus $B$, the map $f|_{f^{-1}(B)}$ is just an
ordinary covering map. The pre-image of $B$ is not assumed to be connected, and indeed, could have multiple connected components. Transverse to the branching locus $B$, $f$ behaves like the map $z\rightarrow z^m$ near the origin -- though again, at different pre-image points the value of $m$ might be different.

This definition can be readily extended to the setting of CW-complexes. As we only need the $2$-dimensional case, we will focus on that setting. 

\begin{definition}\textbf{($n$-fold branched covering of $2$-complexes)}
    Given a pair of finite $2$-dimensional CW-complexes $X$, $Y$, an $n$-fold branched (or ramified) covering is a continuous map $f:X\rightarrow Y$ satisfying the following property. There is a finite subset of points $B\subset Y$, called the branching locus, which satisfies $B \cap Y^{(1)}=\emptyset$ (so $B$ lies in the interior of the $2$-cells). Moreover, $f|_{X\setminus f^{-1}(B)}:X\setminus f^{-1}(B)\rightarrow Y\setminus B$ is a covering map of degree $n$, and for each $p\in f^{-1}(B)$ there are local coordinate charts $U,V\rightarrow\mathbb{C}$ about $p,f(p)$ on which $f$ is given by $z\mapsto z^m$ for some positive integer $m$, called the branching index of $f$ at $p$. 
\end{definition}

Note that, in the case where a $2$-cell in $Y$ contains more than one branch point, connected components of its preimage might no longer be homeomorphic to a disk. However, if there is a single branch point inside a $2$-cell, then each component in its preimage will be homeomorphic to a disk.

\vskip 10pt

\subsection{Small Cancellation Conditions}

Small cancellation has been a useful tool in combinatorial group theory since the 1970s, see e.g. the references \cite{lyndon1977combinatorial}, \cite{gromov1987hyperbolic}, \cite{gromov2003random}, and \cite{guirardel2012geometric}.
There are various notions of small cancellation. Here, we start
by recalling the  classical small cancellation with respect to a group presentation. Roughly speaking, this condition says that any common subword between two relators in a presentation is short compared to the length of the relators.

Let $X$ be a symmetric generating set for a group $\Gamma$, i.e. $X$ contains all elements of a generating set $S$ and their inverses. We call an element of $S$ a $\textit{letter}$. A $\textit{word}$ $w$ is finite string of letters $w=s_1\dots s_m$. We consider $w$ as an element of the free group $F$ with the generating set $S$. Then each element of $F$ other than the identity $1$ has a unique representation as a $\textit{reduced word}$ $w=s_1\dots s_n$ in which no two successive letters $s_i s_j$ form an inverse pair $s_i s_i^{-1}$. The integer $n$ is the $\textit{length}$ of $w$, which we denote by $|w|$. A reduced word is called $\textit{cyclically reduced}$ if $s_n$ is not the inverse of $s_1$. If there is no cancellation in the product $z=u_1\dots u_n$, we write $z\equiv u_1\dots u_n$.

A subset $R$ of $F$ is called $\textit{symmetrized}$ if all elements of $R$ are cyclically reduced and for each $r$ in $R$, all cyclically reduced cyclic permutations of both $r$ and $r^{-1}$ also belong to $R$. Suppose that $r_1\equiv bc_1$ and $r_2\equiv bc_2$ are distinct elements of $R$. If $b$ is the maximal such subword then it is called a $\textit{piece relative to the set R}$ or simply a $\textit{piece}$.

\begin{definition}
   We say that $R$ satisfies the small cancellation condition $C'(\lambda)$ if for $r\in R$ with $r\equiv bc$ where $b$ is a piece, then $|b|<\lambda |r|$. In this case, we also say that the presentation satisfies $C'(\lambda)$. Also, for a group $\Gamma$, if there is a presentation that satisfies $C'(\lambda)$, we say that $\Gamma$ is $C'(\lambda)$ group.
\end{definition}

The following is well known:
\begin{proposition}\label{gromov}\cite{gromov1987hyperbolic}
    If a finitely presented group $\Gamma$ satisfies $C'(\frac{1}{6})$, then $\Gamma$ is word hyperbolic. 
\end{proposition}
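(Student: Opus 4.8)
The plan is to derive word hyperbolicity from the fact that a $C'(1/6)$ presentation is a \emph{Dehn presentation} --- one for which Dehn's algorithm solves the word problem --- together with the standard chain of equivalences relating admitting a Dehn presentation, satisfying a linear isoperimetric inequality, and being Gromov hyperbolic. So it suffices to exhibit, for the presentation $\langle S \mid R\rangle$ of $\Gamma$, a short relator lurking inside every word that represents the identity.

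First I would replace $R$ by its symmetrized closure $R^{*}$, consisting of all cyclically reduced cyclic permutations of the elements of $R$ and of their inverses; this leaves the normal closure (hence $\Gamma$) unchanged, and one checks directly that $R^{*}$ still satisfies $C'(1/6)$, with the same notion of piece. The crucial combinatorial input is \emph{Greendlinger's Lemma}: if $w$ is a nonempty cyclically reduced word of $F$ with $w =_{\Gamma} 1$, then some $r\in R^{*}$ admits a factorization $r\equiv st$ with $s$ a subword of some cyclic permutation of $w$ and $|s| > (1-3\lambda)|r|$; for $\lambda = 1/6$ this reads $|s| > |r|/2$. To prove this I would fix a reduced van Kampen diagram $D$ over $\langle S\mid R^{*}\rangle$ with boundary label $w$ and with a minimal number of $2$-cells. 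Reducedness together with $C'(1/6)$ forces every interior edge of $D$ to be a piece, so the boundary of each interior $2$-cell is cut into more than $1/\lambda \ge 6$ subarcs by the edges it shares with its neighbors; equivalently, each interior face has degree at least $7$. A counting argument based on the Euler characteristic of $D$ --- equivalently, a combinatorial Gauss--Bonnet computation with the angle assignment that makes interior vertices and interior faces have nonpositive curvature --- then shows that positive curvature must concentrate near $\partial D$, and concretely that $D$ contains a $2$-cell whose intersection with $\partial D$ is an arc comprising more than half of its boundary; the label of that arc is the desired subword $s$.

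Granting Greendlinger's Lemma, Dehn's algorithm runs as follows. Given $1\neq g\in\Gamma$ represented by a cyclically reduced word $w$, find $r\equiv st\in R^{*}$ with $s$ a subword of a cyclic permutation of $w$ and $|s| > |r|/2$. Since $r =_{\Gamma} 1$ we may substitute $t^{-1}$ for $s$, and $|t^{-1}| = |r| - |s| < |s|$, producing a strictly shorter word for $g$; we cyclically reduce and repeat. The length strictly decreases at each step, so the process terminates, and it reaches the empty word exactly when $g = 1$. In particular, a loop of length $\ell$ in the presentation complex is filled using at most $\ell$ relators (each substitution deletes at least one letter), so $\Gamma$ satisfies a linear isoperimetric inequality and is therefore word hyperbolic by Gromov's criterion \cite{gromov1987hyperbolic}; see also \cite{lyndon1977combinatorial} for the small cancellation input.

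I expect the main obstacle to be the proof of Greendlinger's Lemma itself: the $C'(1/6)$ hypothesis only controls \emph{pairwise} overlaps between relators, and converting this local control into the \emph{global} statement that every reduced diagram has a boundary cell of exterior arc exceeding half its perimeter requires the full curvature/Euler-characteristic bookkeeping on van Kampen diagrams. Once that is in hand, the remaining steps --- symmetrization, termination of Dehn's algorithm, and the passage from a linear isoperimetric inequality to hyperbolicity --- are routine and standard.
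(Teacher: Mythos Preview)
Your argument is correct and follows the classical route: symmetrize, invoke Greendlinger's Lemma via the curvature/Euler-characteristic analysis of reduced van Kampen diagrams, deduce that $\langle S\mid R^*\rangle$ is a Dehn presentation, and conclude via the linear isoperimetric inequality. This is exactly the standard proof one finds in \cite{lyndon1977combinatorial} and subsequent expositions.

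However, the paper itself gives \emph{no proof at all} of this proposition: it is stated as a well-known result and attributed to \cite{gromov1987hyperbolic}, with no argument supplied. So there is nothing in the paper to compare your approach against --- you have simply written out what the authors chose to cite as background. Your sketch is sound; the only place requiring real work, as you correctly identify, is the Gauss--Bonnet/counting step that produces a boundary cell with exterior arc exceeding half its perimeter, and that is precisely the content of Greendlinger's Lemma.
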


Geometric consequences of the small cancellation hypothesis are further studied in \cite{lyndon1977combinatorial}, typically via the group's presentation $2$-complex, as well as van Kampen diagrams and their mapping to $2$-complexes. This allows the small cancellation condition to be reformulated geometrically, and the results to be generalized to the setting of polygonal $2$-complexes. 

Recall that the attaching maps for the $2$-cells in a polygonal $2$-complexes are given by a (cyclic) sequence of directed edges from the $1$-skeleton. For each $2$-cell $D$ in a polygonal $2$-complex, the boundary $\partial D$ is a cycle graph, and one can label the edges of $\partial D$ according to the directed edges they map to in the $1$-skeleton.

 We will consider combinatorial subpaths $b$ in $\partial D$ which are injective on their interior (so at most, agree at the two endpoints). Note that we allow the two extremal cases (i) where the path has length zero (so is just a point), and (ii) where the path covers the entirety of $\partial D$. A {\it subpiece} is a subpath in the boundary of a pair of disks $\partial D$, $\partial D'$, whose labels, including orientation, are identical. Note that $D'$ can possibly be the same disk $D$, but with distinct initial vertices for the two subpaths of the boundary. In that case a subpiece would be contained in the self-intersection of the attaching map on $\partial D$. A \textit{piece} is a subpiece in the boundary of a pair of disks $\partial D, \partial D'$ which is maximal under containment. 
 
\begin{remark}
    Maximality gives us some insight on the local behavior of the labels on $D$, $D'$ near the endpoints. If the endpoints of the piece are distinct vertices in $\partial D$, $\partial D'$, then maximality tells us that at the initial (and terminal) vertex of the path $b$, the previous (resp. following) edges of $\partial D$ and $\partial D'$ have to be distinct. 
    
    The other possibility is that the endpoints of the piece $b$ coincide in $D$ (for example). Note that this means the label on $D'$ contains an entire copy of the label on $D$.
\end{remark} 

We say that $D$ satisfies $C'(\lambda)$ if for any piece $b$ of $D$, we have $$\frac{\ell(b)}{\ell(\partial D)}<\lambda$$
where $\ell$ is the combinatorial path length. We say that a $2$-complex satisfies $C'(\lambda)$ if each of its $2$-cells satisfy $C'(\lambda)$. This provides us with a geometric notion of small cancellation, and results on groups satisfying classical small cancellation (established via analysis of the presentation $2$-complex) readily generalize to $2$-complexes satisfying geometric small cancellation. 

\medskip

 In our later constructions, we will consider certain finite covers of the $1$-skeleton of $X$, with certain lifts of attaching maps. Given a $2$-cell $D$ with attaching map $\alpha: \partial D \rightarrow X^{(1)}$, we have an associated map $\tilde \alpha: \mathbb R \rightarrow X^{(1)}$ obtained by composing the universal covering map $\pi: \widetilde{\partial D} \rightarrow \partial D$ with the attaching map. We can identify $\widetilde{\partial D}$ with $\mathbb R$ equipped with its standard simplicial structure. The map $\tilde \alpha$ is then described by the bi-infinite, periodic word obtained by lifting the edge labels from $\partial D$ to $\widetilde{\partial D} \cong \mathbb R$.  Since we will be interested in studying small cancellation properties associated to some of these covers, we now formulate a notion that is slightly more general than a piece.

A {\it sub-overlap} between two disks $D$, $D'$ is a pair of finite combinatorial subpaths $\bf{p}\subset \widetilde{\partial D}$ and $\bf{p'}\subset \widetilde{\partial D'}$ on which the lifted attaching maps $\tilde \alpha: \widetilde{\partial D} \rightarrow X^{(1)}$ and $\tilde \beta: \widetilde{\partial D'} \rightarrow X^{(1)}$ coincide. Note that the paths $\bf{p}, \bf{p'}$ are not required to respect the chosen orientations on $\widetilde{\partial D}, \widetilde{\partial D'}$. Sub-overlaps are considered equivalent if they differ by translation by the $\pi_1(\partial D)$ and $\pi_1(\partial D')$ actions. We also allow the case where $D=D'$, in which case we also require the starting point of the overlaps $\bf{p} \subset \widetilde{\partial D}$ and $\bf{p'} \subset \widetilde{\partial D'}$ to be in distinct orbits of the $\pi_1(\partial D)$-action (i.e. correspond to distinct initial points in $\partial D$) if both paths $\bf{p}, \bf{p'}$ have the same relative orientation. When $D=D'$ and the paths $\bf{p}, \bf{p'}$ have opposite orientations (relative to the fixed orientation on $\widetilde{\partial D}$) they will always be considered distinct. 

\begin{definition}
       An {\it overlap} is a sub-overlap which is maximal under containment. The {\it overlap ratio} of $D$ with $D'$ is defined to be 
       $$o(D, D') = \sup_{\bf{p}} \frac{\ell(\bf{p})}{\ell(\partial D)}$$
       where the supremum is over all overlaps $(\bf{p},\bf{p'})$ between $D$ and $D'$. The overlap ratio of a $2$-cell $D$ is then defined to be $o(D):= \sup_{D'} o(D, D')$, and the overlap ratio of the polygonal $2$-complex $X$ is defined by $o(X) = \sup_D o(D) = \sup_{D, D'} o(D, D')$. 
 \end{definition}

Observe that any piece automatically gives a sub-overlap. In particular, for $\epsilon <1$ the $C'(\epsilon)$ small cancellation condition is implied by the statement that  $o(X)<\epsilon$. 
On the other hand, when the overlap ratio of a pair satisfies $o(D, D')\geq 1$, this just means that the label for $\partial D$ is entirely contained in the label for $\partial D'$.

\begin{lemma}\label{lem-bound-on-overlap-length}
    Let $X$ be an acceptable polygonal $2$-complex, $(\bf{p},\bf{p'})$ an overlap between disks $D, D'$, and set $M=\max \{\ell(\partial D), \ell(\partial D')\}$. Then the length of the overlap is bounded above by $\ell({\bf p}) < M^2+M$.
    In particular, for any pair $D, D'$ of $2$-cells, the overlap ratio $o(D, D')$ is finite. It follows that for any finite acceptable polygonal $2$-complex, the overlap ratio $o(X)$ is finite.
\end{lemma}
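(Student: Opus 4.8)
The plan is to prove the (slightly stronger) bound
$\ell(\mathbf p) < \mathrm{lcm}\big(\ell(\partial D),\ell(\partial D')\big) \le \ell(\partial D)\cdot \ell(\partial D') \le M^2 < M^2+M$,
from which everything follows. Write $p=\ell(\partial D)$ and $q=\ell(\partial D')$, and recall that $\tilde\alpha\colon\widetilde{\partial D}=\mathbb R\to X^{(1)}$ is $p$-periodic while $\tilde\beta\colon\widetilde{\partial D'}=\mathbb R\to X^{(1)}$ is $q$-periodic. A sub-overlap is recorded by two integer intervals $\mathbf p=[a,b]$ and $\mathbf p'=[c,d]$ of a common length $L$, together with the affine isometry $\phi(x)=\varepsilon x+t$ ($\varepsilon=\pm1$, $t\in\mathbb Z$) carrying $\mathbf p$ onto $\mathbf p'$ and satisfying $\tilde\alpha|_{\mathbf p}=\tilde\beta\circ\phi$.

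The crux is an extension step: if a sub-overlap has length $L\ge \ell_0:=\mathrm{lcm}(p,q)$, then it is properly contained in a longer sub-overlap. Indeed, take a point $x$ just to the left of $[a,b]$; since $\ell_0\le L$ we have $x+\ell_0\in[a,b]$, so the matching there gives $\tilde\alpha(x+\ell_0)=\tilde\beta(\phi(x+\ell_0))$. As $p\mid\ell_0$ and $q\mid\ell_0$, periodicity of $\tilde\alpha$ and $\tilde\beta$ turns this into $\tilde\alpha(x)=\tilde\beta(\phi(x))$, i.e. the match already holds one edge further to the left; the symmetric computation works on the right. One checks the enlarged triple is still a legitimate sub-overlap: in the case $D=D'$ with equally oriented subpaths, the requirement that the two initial vertices project to distinct points of $\partial D$ is unchanged by an extension, and for oppositely oriented subpaths there is no condition to preserve.

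Iterating, a sub-overlap of length $\ge\ell_0$ would be contained in sub-overlaps of every larger length, so $\tilde\alpha=\tilde\beta\circ\Phi$ on all of $\mathbb R$ for a global affine isometry $\Phi$; acceptability rules this out. If $\Phi$ preserves orientation, the periodic edge-word $w$ of $\partial D$ equals, up to a shift, the periodic edge-word $w'$ of $\partial D'$, so $w$ has period $g:=\gcd(p,q)$. If $g<p$, then (matching oriented edge labels with period $g$ forces the vertices of $\tilde\alpha$ to recur with period $g$) the attaching map $\alpha$ runs around a fixed loop $\gamma$ exactly $p/g\ge2$ times, so $[\alpha]=[\gamma]^{p/g}$ is a proper power, or trivial if $[\gamma]=1$, contradicting condition (2); symmetrically if $g<q$. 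If $g=p=q$, then either $D\ne D'$ and $\alpha,\alpha'$ are freely homotopic, again contradicting (2), or $D=D'$ and the shift is nonzero modulo $p$ by the distinct–initial–vertex condition, so $w$ has period $<p$ and $\alpha$ is a proper power. If $\Phi$ reverses orientation, then $[\alpha']$ is conjugate to $[\alpha]^{-1}$ in the free group $\pi_1(X^{(1)})$; since $[\alpha],[\alpha']$ are nontrivial this is impossible when $D=D'$ (no nontrivial element of a free group is conjugate to its inverse), and for $D\ne D'$ it contradicts condition (2) read for unoriented $2$-cells, i.e. attaching maps up to conjugacy and inversion. Hence every sub-overlap — in particular every overlap — satisfies $\ell(\mathbf p)<\ell_0\le pq\le M^2<M^2+M$; consequently $o(D,D')=\sup_{\mathbf p}\ell(\mathbf p)/\ell(\partial D)\le\ell_0/p\le q\le M<\infty$, and $o(X)$ is finite because a finite $2$-complex has only finitely many $2$-cells.

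The step I expect to be the main obstacle is the last one: carefully reducing each "global agreement" configuration — especially the orientation-reversing case and the $D=D'$ self-overlaps — to the three clauses of acceptability, using "pairwise non-homotopic attaching maps" in the form appropriate to unoriented $2$-cells, together with the standard fact that a free group has no nontrivial element conjugate to its own inverse.
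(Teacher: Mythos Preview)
Your argument is correct and in fact yields the sharper bound $\ell(\mathbf p)<\mathrm{lcm}(p,q)\le M^2$. The route is genuinely different from the paper's. The paper works \emph{inside} a single long overlap: assuming $\ell(\mathbf p)\ge M^2+M$, it uses B\'ezout's identity to combine translations by $\ell(\partial D)$ and $\ell(\partial D')$ and thereby exhibit $\gcd(p,q)$-periodicity of the common labeled subpath directly; the $M^2+M$ window is what guarantees that the intermediate B\'ezout translates stay inside the overlap. You instead prove a one-edge \emph{extension} lemma---any sub-overlap of length $\ge\mathrm{lcm}(p,q)$ extends on both sides---and iterate to a global identity $\tilde\alpha=\tilde\beta\circ\Phi$, which you then dismantle case by case. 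Both arguments are instances of the Fine--Wilf phenomenon; yours trades the explicit B\'ezout bookkeeping for a cleaner limiting argument and a slightly better constant. Your case analysis is also more explicit about orientation-reversing overlaps than the paper's, which folds this into the phrase ``attached along the same map.'' Your flagged concern---that the $D\ne D'$ orientation-reversing endgame needs ``pairwise non-homotopic'' read as ``up to conjugacy \emph{and} inversion''---is legitimate, but note that the paper's own proof rests on the identical reading at the identical step, so this is a feature of the hypothesis, not a defect of your approach.
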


\begin{proof}
    Let us consider the case where $D\neq D'$, and assume that the overlap $(\bf{p},\bf{p'})$ has length $$\ell({\bf p}) \geq M^2 + M.$$ Since the lifted attaching maps $\bar \alpha, \bar \beta$ coincide on $\bf{p}, \bf{p'}$, these subpaths of $\widetilde{\partial D}, \widetilde{\partial D'}$ have identical edge labelings. The labeled bi-infinite paths $\widetilde{\partial D}, \widetilde{\partial D'}$ are periodic with respect to the $\pi_1(\partial D)$-action and $\pi_1(\partial D')$-action, which are translations by $\ell(D), \ell(D')$ respectively. Since the common subpath contains fundamental domain for both translations, one can apply the Euclidean algorithm to find a subpath of length $r=GCD(\ell(D), \ell(D'))$ that tiles both fundamental domains. To see this, consider the initial subpath ${\bf q} \subset {\bf p}$ of length $r$. Since $r=GCD(\ell(D), \ell(D'))$, the Euclidean algorithm provides us with an integral solution to B\'ezout's identity
    $$r=A\ell(D) + B\ell(D')$$
    and the solution satisfies $|A|\leq \ell (D'), |B|\leq \ell(D)$. Note that exactly one of the integers $A,B$ is positive, the other is negative. Assume without loss of generality that $A$ is positive. Viewing $\bf{q} \subset \bf{p'}$ as the initial segment of $\bf{p'}$, and using $\pi_1(D')$-periodicity of $\widetilde{\partial D'}$, we can translate $A$ times along $\widetilde{\partial D'}$. Since the length of ${\bf p'}$ is at least $M^2+M$, this translate of ${\bf q}$ is still contained within ${\bf p'}$. We now switch to viewing that translate as contained in ${\bf p}$, and use $\pi_1(D)$-periodicity of $\widetilde{\partial D}$ to translate $B$ times along $\widetilde{\partial D}$. This has the effect of translating $\bf{q}$ by exactly $r$, and hence the initial portion of ${\bf p}$ of length $2r$ consists of two copies of $\bf{q}$. We can iterate this process $M/r$-times, noting that the hypothesis that $\ell({\bf p})\geq M^2+M$ guarantees that the forward and backward translates from B\'ezout's identity land within the common subwords ${\bf p}, {\bf p'}$. If $r<M$, this tells us that the larger of the two words is a proper power, contradicting the definition of acceptable $2$-complex. On the other hand, if $r=M$ we get that $\ell(D)=\ell(D')$, and the two disks $D, D'$ are attached along the same map, which again contradicts the  definition of acceptable $2$-complex.

    Next we consider the case where $D=D'$, i.e. self-overlaps. Then one has that the bi-infinite label on $\widetilde{\partial D}$ is periodic with respect a translation by $\ell(D)$. Since the subwords ${\bf p}, {\bf p'}$ differ by a translation by some $0<k<\ell(D)$, the subword ${\bf p}$ is also $k$-periodic. Then as before we can apply the Euclidean algorithm to obtain a solution to B\'ezout's identity, and use combinations of $\ell(D)$-translations and $k$-translations to see that the ${\bf p}$ is actually periodic with period $r=GCD(k, \ell(D)) < \ell(D)$. This implies the attaching map for $D$ is a proper power, which again contradicts $X$ acceptable. 
 \end{proof}

\begin{cor}\label{cor-bound-on-number-of-overlaps}
    Any acceptable polygonal $2$-complex $X$ only contains finitely many equivalence classes of overlaps $(\bf{p},\bf{p'})$.
\end{cor}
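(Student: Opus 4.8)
The plan is to derive the corollary directly from Lemma~\ref{lem-bound-on-overlap-length} together with the finiteness of $X$. Since $X$ has only finitely many $2$-cells, I would first set $N=\max_{D}\ell(\partial D)$, the maximum taken over all $2$-cells $D$ of $X$, which is a finite integer. For any ordered pair of $2$-cells $D,D'$ we then have $M:=\max\{\ell(\partial D),\ell(\partial D')\}\le N$, so Lemma~\ref{lem-bound-on-overlap-length} shows that every overlap $(\mathbf{p},\mathbf{p}')$ between $D$ and $D'$ satisfies $\ell(\mathbf{p})=\ell(\mathbf{p}')<N^2+N$. Thus there is a uniform bound on the length of all overlaps occurring in $X$.

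Next I would observe that an equivalence class of overlap is pinned down by a finite amount of combinatorial data. Fix an ordered pair of $2$-cells $D,D'$; there are finitely many such choices. The covering $\pi\colon\widetilde{\partial D}\to\partial D$ is the universal cover of a finite cycle graph, hence regular with deck group $\pi_1(\partial D)\cong\mathbb{Z}$ acting by the integer translations of $\widetilde{\partial D}\cong\mathbb{R}$. Consequently a finite combinatorial subpath $\mathbf{p}\subset\widetilde{\partial D}$, considered up to the $\pi_1(\partial D)$-action, is completely determined by the triple consisting of the vertex $\pi(v)\in\partial D$ where $v$ is the initial point of $\mathbf{p}$, the length $\ell(\mathbf{p})$, and the direction of $\mathbf{p}$ relative to the fixed orientation of $\widetilde{\partial D}$. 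Once we impose the length bound $\ell(\mathbf{p})<N^2+N$, each entry of this triple ranges over a finite set: at most $N$ vertices, fewer than $N^2+N$ lengths, and two directions. The identical statement applies to $\mathbf{p}'\subset\widetilde{\partial D'}$ up to the $\pi_1(\partial D')$-action.

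Putting these together, an equivalence class of overlap between $D$ and $D'$ is specified by the pair $(D,D')$ together with the finite data describing $\mathbf{p}$ up to $\pi_1(\partial D)$-translation and $\mathbf{p}'$ up to $\pi_1(\partial D')$-translation; in the case $D=D'$ the additional conditions in the definition of overlap only shrink this set further. Since each ingredient ranges over a finite set, there are only finitely many equivalence classes of overlaps in $X$, as claimed. I do not expect any genuine obstacle here beyond the bookkeeping; the one point that warrants care is the claim that a subpath of $\widetilde{\partial D}$, taken up to deck translation, is determined by its starting vertex in $\partial D$, its length, and its direction — but this is exactly the statement that $\widetilde{\partial D}\to\partial D$ is the universal cover of a cycle graph, so that the lifts of a given based path in $\partial D$ are permuted simply transitively by the deck group.
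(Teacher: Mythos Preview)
Your argument is correct and follows essentially the same approach as the paper: bound the length of overlaps via Lemma~\ref{lem-bound-on-overlap-length}, then observe that each equivalence class is determined by finitely many choices (the pair of $2$-cells, the initial vertex of $\mathbf{p}$ modulo the deck action, its length, and its direction), with the analogous data for $\mathbf{p}'$. The paper phrases the choice of initial vertex as lying in a fundamental domain rather than as its image in $\partial D$, but this is the same thing.
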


\begin{proof}
    For a given pair of $2$-cells $D$, $D'$, we can count the overlap pairs $(\bf{p},\bf{p'})$. Up to the action of $\pi_1(\partial D)$, the initial point of the path $\bf{p}$ can be chosen in a fixed fundamental domain $F\subset \widetilde{\partial D}$, where $F$ is a combinatorial interval of length $\ell(\partial D)$. Thus there are at most $\ell(\partial D)$ possible initial vertices for $\bf{p}$. From Lemma~\ref{lem-bound-on-overlap-length}, there is also a uniform bound on the length of $\bf{p}$. Thus there are at most finitely many possibilities for the path $\bf{p}$. A symmetric argument shows that there are at most finitely many choices for $\bf{p'}$, hence finitely many possibilities for the pairs $(\bf{p},\bf{p'})$. Since the acceptable $2$-complex $X$ has a finite number of $2$-cells, the corollary follows.
\end{proof}

\bigskip

\section{Random Branched Coverings} \label{Section:Branched}

In this section, we describe our random model for branched coverings of finite $2$-complexes, with the motivating example being the case of the presentation $2$-complex of a finitely presented group. We then establish a few basic properties concerning the behavior of $2$-cells in our random model.

 \vskip 10pt

\subsection{Branched Coverings of presentation $2$-complexes}\label{subsec:brcov}

Let us first focus on the setting of a presentation $2$-complex. Let $\Gamma=\langle u_1,\dots,u_t\mid r_1,\dots,r_s\rangle$ be a finite presentation of a group $\Gamma$, and let $X$ be the presentation $2$-complex for the fixed group presentation above. The complex consists of a single vertex $v$ along with oriented loops $x_1,\dots, x_t$ corresponding to each generator, and $2$-disks $D_1,\dots, D_s$ that are attached by the attaching maps $r_1,\dots,r_s$ corresponding to the relators. For such complexes, being an acceptable $2$-complex just means that the finitely presented group has at least two generators, that no relator is conjugate to a proper power, and that no relations are conjugate to each other.

\begin{remark}
    Any finite polygonal $2$-complex that has a single vertex can be viewed as a presentation $2$-complex for its fundamental group. These spaces have the advantage of having a canonical basepoint for the fundamental group. To deal with the general case of a finite connected polygonal $2$-complex $X$, we can contract a spanning tree $T$ for the $1$-skeleton to obtain a $1$-vertex $2$-complex $X'$, and use the branched cover model for covers of $X'$. The details can be found in Section~\ref{sec:multiple-vertex-case}.
\end{remark}

Next let us analyze a branched cover of $\rho:\hat X \rightarrow X$ between polygonal $2$-complexes from the viewpoint of the attaching maps.
The branching locus $B \subset X$ consists of the set of centers of the $2$-disks $D_1,\dots, D_s$. On the level of the $1$-skeletons, we have a not necessarily connected $n$-fold covering of the $1$-skeleton $X^{(1)}$. Aside from the covering $\hat X^{(1)}$ of $X^{(1)}$, the branched cover $\hat X$ has a number of $2$-disk attached to the $1$-skeleton. The covering condition on the $1$-skeleton forces the attaching maps of these disks to follow the lifts of the attaching map $r_1,\dots,r_s$.

\begin{remark}[Labeling convention]\label{rmk:label-convention}
    Since the presentation $2$-complex $X$ has a single vertex, an $n$-fold covering of $X^{(1)}$ has $n$ vertices, which we will label $v_1$ to $v_n$. Under the covering map, the pre-image of each directed loop $x_i$ will consist of $n$ directed edges which we denote $x_{i1},\dots,x_{in}$. Our labeling convention is to label the lifted edge $x_{ij}$ to originate at the lifted vertex $v_j$. Then each vertex $v_j$ has some lifted edge  $x_{ik}$ coming in, and the lifted edge $x_{ij}$ going out. Note the situation where $j=k$ corresponds to the case where the lifted edge starting at $v_j$ is a loop at the vertex. 
\end{remark}

  \begin{figure}[h]\label{fig branched covering}
\includegraphics[width=7cm,height=5cm]{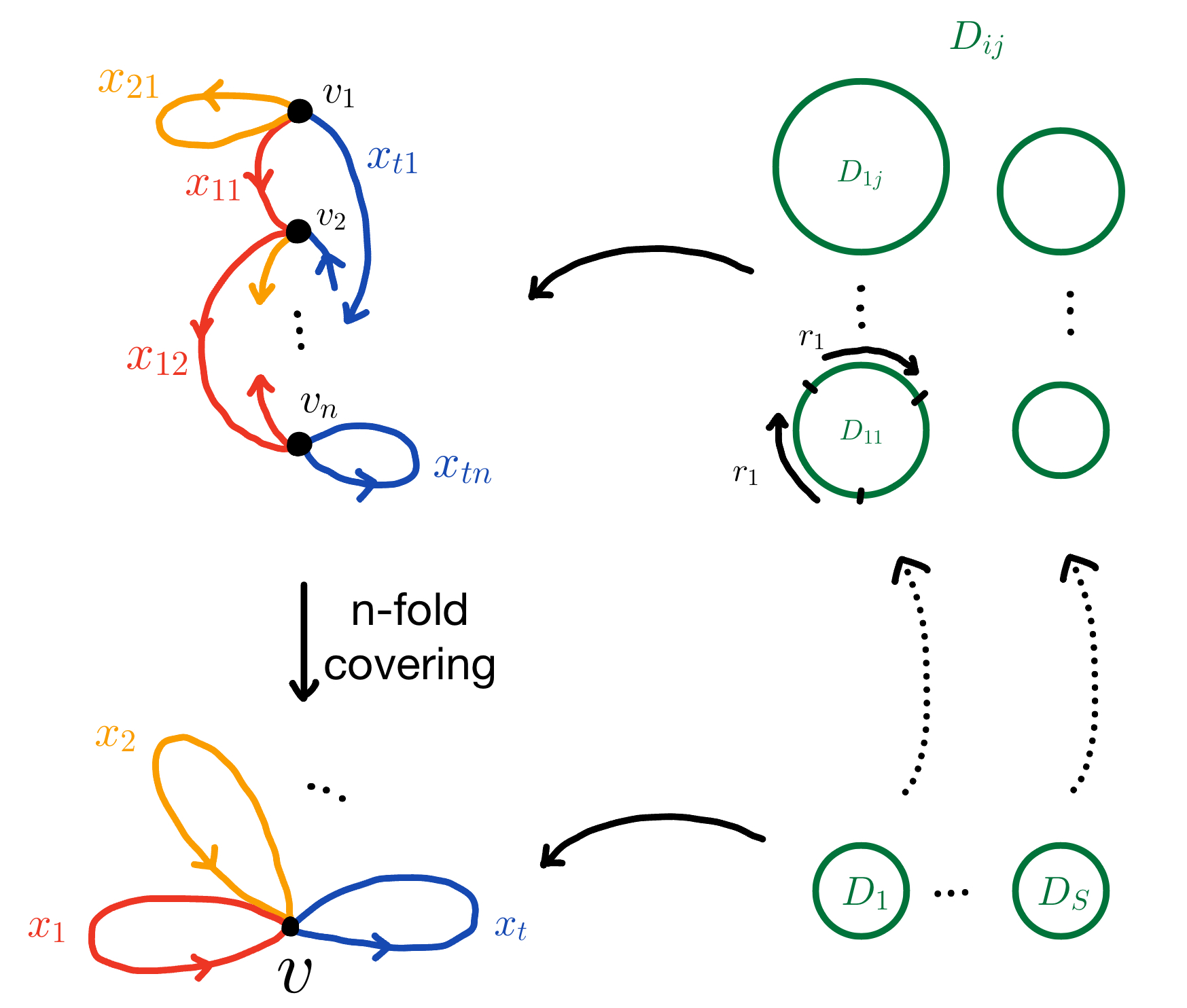}
\caption{A branched cover of a presentation $2$-complex}
\label{fig:branchedcoverpres}
\end{figure}

This now suggests a method for producing branched covers. Start with an ordinary finite cover of the $1$-skeleton $\rho: \widehat{X^{(1)}} \rightarrow X^{(1)}$. Note that each disk $D_i$ in $X$ has its boundary labeled $r_i$, and for each vertex in the cover we have a path starting at that vertex and following the letters of $r_i$. If this path in the cover ends at a vertex different from the starting vertex, we follow the letters of $r_i$ again, repeating the process until the word ends at the starting vertex. In this process, we get a closed loop in $\widehat{X^{(1)}}$, and we can attach a $2$-disk to $\widehat{X^{(1)}}$ along this lifted loop. We call these disks $D_{ij}$, for suitable indexing set for $j$, and refer to $D_{ij}$ as a \emph{lift} of the disk $D_i$. Note that some lifted loops coincide up to cyclic permutation, due to the choice of where to start a lift. We only attach a single disk to $\widehat{X^{(1)}}$ along each of these loops. The degree of the covering map on the boundary of the disks $\partial D_{ij} \rightarrow \partial D_i$ is denoted by $\ind(D_{ij})$ and called the \emph{index} of the disk $D_{ij}$. Attaching all lifts of disks $D_1,\dots,D_s\subset X$, we obtain a space $\widehat{X}$ and a candidate branched covering map $\widehat{X}\rightarrow X$, where the branching locus is the set of the centers $c_{i}\subset D_{i}$ of all of the disks in the original 2-complex. See Figure~\ref{fig:branchedcoverpres} for an illustration of this process.

\begin{remark}
    The branching index of each center $c_{ij}\subset D_{ij}$ is the same as the index of the disk $D_{ij}$. The combinatorial lengths of the attaching maps are related via the simple formula $$\ell(\partial D_{ij})=ind(D_{ij})\ell(\partial D_i)=ind(D_{ij})|r_i|$$
    where $\ell$ is the combinatorial length.
\end{remark} 

\vskip 10pt

\subsection{Random model for branched coverings}\label{subsec:random-model}

We now proceed to define our random model for branched coverings of a presentation $2$-complex, which allows us to 
randomly pick a degree $n$ branched cover of the $2$-complex $X$. We call this the  {\it random labeled branched cover model}. Note that, as detailed in the previous section, each degree $n$ branched cover of the $2$-complex $X$ determines, and is determined by, an ordinary degree $n$ cover of the $1$-skeleton $X^{(1)}$. Since in our special case $X^{(1)}$ is a bouquet of $t$ circles, it is easy to describe the degree $n$ covers of this graph.

Labeling the $n$ pre-images of the single vertex $v$ by labels $V=\{v_1, \ldots, v_n\}$, covering space theory tells us that associated to each loop $x_i$ in $X$, we have a permutation $\sigma_i$ of the vertex set $V$. The collection of permutations is determined by the finite cover, and conversely, determines the cover up to label preserving isomorphism. Thus there is a bijection between the set of degree $n$ labeled branched covers, and elements in the product of $t$ copies of the symmetric group $\Sym(n)$.

A candidate random $n$-fold covering of $X^{(1)}$ can now be generated by choosing $t$ random permutations $\sigma=(\sigma_1,\dots,\sigma_t)$ with uniform distribution, where each $\sigma_i\in \Sym(n)$ corresponds to each generator $u_i$ for $i=1,\dots,t$. For a generator $u_i$ and its corresponding loop $x_i$ in $X^{(1)}$, a permutation $\sigma_i$ encodes the preimage of $x_i$ in the $n$-fold covering space. More precisely, if the permutation $\sigma_i$ maps the integer $a$ to the integer $b$, then there exists an oriented pre-image of the edge $x_i$ that joins the vertex $v_a$ to the vertex $v_b$. For a finite presentation of a group $\Gamma$, the random choice of $\sigma=(\sigma_1,\dots,\sigma_t)$ where $\sigma_i\in \Sym(n)$ completely determines a $n$-fold covering of $X^{(1)}$, and thus we can carry out the procedure described in the previous section, and attach $2$-disks along all lifts of the attaching maps. We denote the resulting space by $X(\sigma)$. Let us look at a few examples.

\begin{figure}[h!]
\includegraphics[width=10cm,height=5cm]{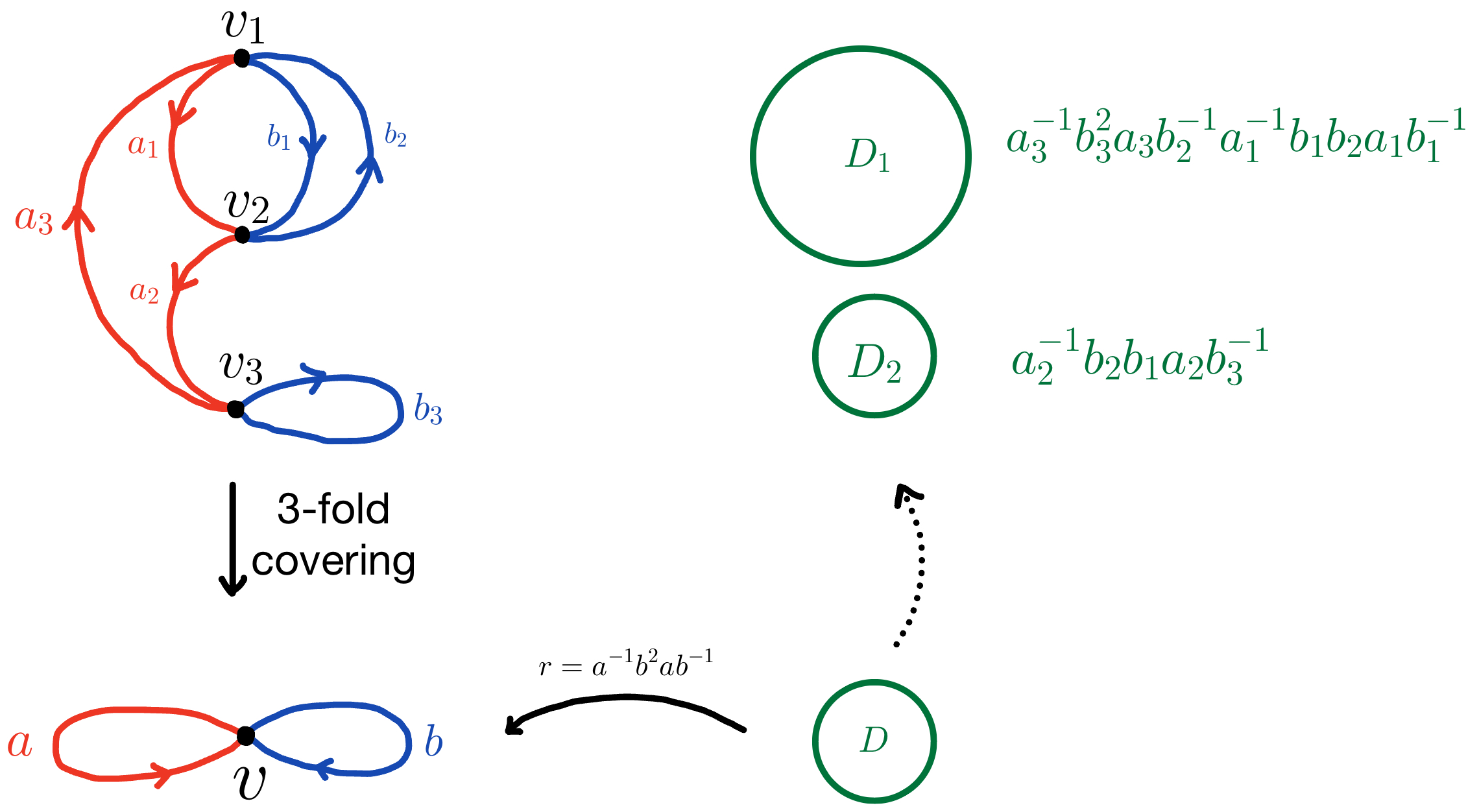}
\caption{A branched covering of degree $3$ (Example \ref{ex})}\label{fig_ex1}
\end{figure}

\begin{example}\label{ex}
    Let $\Gamma=\langle a,b \mid a^{-1}b^2ab^{-1}\rangle$ and $X$ be its presentation $2$-complex. Let $n=3$ and $\sigma_a=(123),\sigma_b=(12)(3)\in \Sym(3)$. From the choice of $\sigma=(\sigma_a,\sigma_b)$, we have a $3$-fold covering of $X^{(1)}$. A disk $D\subset X$ is attached to $X^{(1)}$ along the relator $a^{-1}b^2ab^{-1}$, so our label on $\partial D$ is $a^{-1}b^2ab^{-1}$. The attaching map of $D$ has two connected lifts $D_1$ and $D_2$, which are attached via the maps $$\partial{D_1}=a_3^{-1}b_3^2a_3b_2^{-1}a_1^{-1}b_1b_2a_1b_1^{-1},$$ 
    $$\partial{D_2}=a_2^{-1}b_2b_1a_2b_3^{-1}.$$ 
    Then the obtained $2$-complex $X(\sigma)$ would be a $3$-fold branched covering of $X$ where $\{c_1\}\cup\{c_2\}$ is the preimage of the branching locus with $c_1$ (resp. $c_2$) the center of the disk $D_1$ (resp. $D_2$). The branching index at $c_1$ is $2$ and at $c_2$ is $1$. 
    An illustration of the cover $X(\sigma)$ is given in Figure~\ref{fig_ex1}.

    Consider the fundamental group $\pi_1(X(\sigma))$ in this case. Choose $a_1,a_2$ as a spanning tree of the $1$-skeleton. Then the generators of $\pi_1(X(\sigma))$ will be $a_3,b_1,b_2$ and $b_3$. We have two relators $a_3^{-1}b_3^2a_3b_2^{-1}b_1b_2b^{-1}$ and $b_2b_1b_3^{-1}$, obtained by collapsing the spanning tree $a_1\cup a_2$. We can remove the generator $b_3$ and the relator $b_2b_1b_3^{-1}$ by a Tietze transformation as $b_3=b_2b_1$. Therefore, the fundamental group of the branched cover $X(\sigma)$ is the one relator group with presentation
    \begin{align}
        \pi_1(X(\sigma))\cong \langle a_3,b_1,b_2\mid a_3^{-1}b_2b_1b_2b_1a_3b_2^{-1}b_1b_2b_1^{-1}\rangle.
    \end{align}

\end{example}

\bigskip

\begin{figure}[h!]
\includegraphics[width=8cm,height=5.33cm]{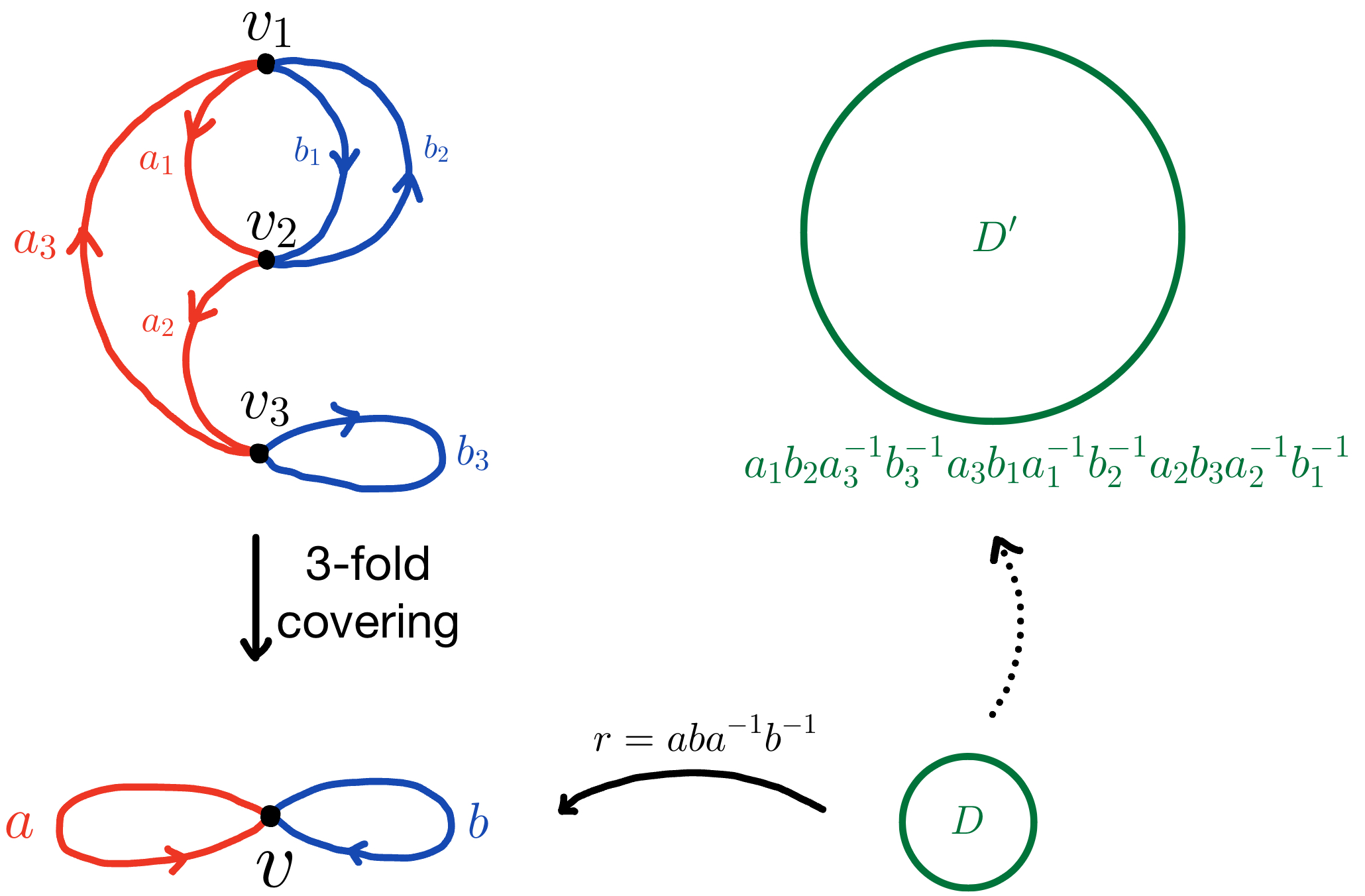}
\caption{A branched covering of degree $3$ (Example \ref{ex2})}\label{fig_ex2}
\end{figure}

\begin{example}\label{ex2}
Let $\Gamma=\langle a,b\mid aba^{-1}b^{-1}\rangle$ and $X$ be its presentation $2$-complex. Let $n=3$ and as before, let $\sigma_a=(123),\sigma_b=(12)(3)\in \Sym(3)$. From the choice of $\sigma=(\sigma_a,\sigma_b)$, we have a $3$-fold covering of $X^{(1)}$. A disk $D\subset X$ is attached to $X^{(1)}$ along the relation $aba^{-1}b^{-1}$ so the label on $\partial D$ is $aba^{-1}b^{-1}$. In this case the attaching map of $D$ has a unique connected lift $D'$, with label $$\partial{D'}=a_1b_2a_3^{-1}b_3^{-1}a_3b_1a_1^{-1}b_2^{-1}a_2b_3a_2^{-1}b_1^{-1},$$ see Figure~\ref{fig_ex2}. Thus the branched cover $2$-complex $X(\sigma)$ will be a $3$-fold branched covering of $X$. The preimage of the branched point is the unique point $\{c'\}$ where $c'$ is the center of the disk $D'$.  The branching index at $c'$ is $3$. 

Note that the original group $\Gamma=\langle a,b\mid aba^{-1}b^{-1}\rangle$ is a surface group of genus $1$, and $X$ is homeomorphic to a torus. Recall that a branched cover of an oriented surface is again an oriented surface, and we can determine which surface by considering the Euler characteristic of the branched covering. Looking again at Figure~\ref{fig_ex2}, we see that $X(\sigma)$ has a CW-structure with three vertices, six edges, and a single $2$-cell, thus giving us $\chi(X(\sigma))=-2$. Since $\chi(X(\sigma))=2-2g$ where $g$ is the genus, we conclude that $X(\sigma)$ will be a surface of genus $2$.

This can also be seen directly from the fundamental group $\pi_1(X(\sigma))$. Choose $a_3,b_1$ as a spanning tree of the $1$-skeleton, then the generators of $\pi_1(X(\sigma))$ will be $a_1,a_2,b_2$ and $b_3$. After collapsing the spanning tree $a_3\cup b_1$, the attaching map gives rise to the single relator $a_1b_2b_3^{-1}a_1^{-1}b_2^{-1}a_2b_3a_2^{-1}$. This gives us the presentation 
    \begin{align}
        \pi_1(X(\sigma))\cong \langle a_1,a_2,b_2,b_3 \mid a_1b_2b_3^{-1}a_1^{-1}b_2^{-1}a_2b_3a_2^{-1}\rangle
    \end{align}
Let $\alpha_1=a_1$, $\beta_1=b_2$, $\alpha_2=a_2^{-1}b_2a_1$, and $\beta_2=b_3$. By applying Tietze transformations, we obtain the presentation
    \begin{align}
        \pi_1(X(\sigma)) &\cong \langle a_1,a_2,b_2,b_3 \mid a_1b_2b_3^{-1}a_1^{-1}b_2^{-1}a_2b_3a_2^{-1}\rangle\\
        &\cong \langle \alpha_1,\beta_1,\alpha_2,\beta_2 \mid [\alpha_1,\beta_1][\alpha_2,\beta_2]\rangle
    \end{align}
which is the standard presentation of the surface group of genus $2$.
\end{example}

Note that in the previous two examples, the space $X$ we started with was an acceptable $2$-complex. It is informative to consider an example where $X$ is {\bf not} acceptable.

\begin{example}\label{example:projective-plane}
    Consider the finite group  $\mathbb Z_2\cong \langle x \mid x^2\rangle$. The presentation two complex $X$ has a single loop labelled $x$, and a single 2-cell attached by the degree two map $r$ on the circle. $X$ is homeomorphic to the projective plane $\mathbb R P^2$, a closed non-orientable surface, so any branched cover of $X$ would also have to be a closed surface. Now consider the space $X(\sigma)$, where $\sigma = (12\ldots n)$ is the cyclic permutation. The $1$-skeleton of $X(\sigma)$ is then $C_n$, a cycle of length $n$. There is only one lift $\tilde r$ of the attaching map to the $1$-skeleton $C_n$. Observe that the lifted map $\tilde r$ to $C_n$ is either a degree one map if $n$ is even, or a degree two map if $n$ is odd. So the space $X(\sigma)$ is either homeomorphic to $\mathbb RP^2$ (if $n$ odd) or to $\mathbb D^2$ (if $n$ even). Of these $X(\sigma)$, only the $n$ odd case produces a branched cover. In the $n$ even case, the natural map $X(\sigma) \rightarrow X$ has the property that each point has exactly $n$-preimages. The only points where it fails to be a branched cover are along the edges of $X$, where the local topology fails to be preserved. Note, however, that this issue can easily be fixed -- one only needs to change the attaching map so that we have the correct local multiplicity along the edges. This can be done by gluing
    multiple copies of the lifted disk. In the case where $n=2k$ is even, attaching {\it two} copies of a disk along the same attaching map resolves the issue. This gives a space which is topologically an $S^2$, with the two disks identified with the two hemispheres attached to the equator, a combinatorial loop of length $2k$. In terms of the covering of the equator circle, the generator of the deck group $\mathbb Z _{2k}$ acts by the cyclic permutation on the labels, so by a rotation on the equatorial loop. The generator also swaps the two attached disks, so that the centers of the two disks (north and south pole) are now a pair of branch points of index $k$, interchanged by the generator. This realizes $S^2$ as a branched cover of degree $n=2k$ over $\mathbb R P^2$, with a single branch point in $\mathbb R P^2$ having two preimages, each with ramification $k$. We will call this a branched cover {\it with multiplicity} and give more details in Remark~\ref{rem:withmult}.
    
      Building a branched cover with multiplicity is not the only option. Indeed, one can also change the power on an attaching map. In the case where $n=2k$ is even for instance, one can instead attach a single disk using the {\it square} of the lifted map.
    
\end{example}

This last example shows the need for the following:

\begin{lemma}
    Let $X$ be an arbitrary presentation $2$-complex. If no relator is a proper power, then $X(\sigma)\rightarrow X$ is a branched cover.
\end{lemma}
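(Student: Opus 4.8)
The plan is to verify the three defining conditions of a branched covering of $2$-complexes for the candidate map $X(\sigma) \to X$ constructed in Section~\ref{subsec:brcov}. The only condition that can fail is the local model at preimages of the branch locus $B = \{c_1, \ldots, c_s\}$, since the construction on the $1$-skeleton is genuinely an $n$-fold covering of graphs and the map away from $f^{-1}(B)$ restricts to a covering by design. So the heart of the matter is to check that near each lifted center $c_{ij} \subset D_{ij}$ the map looks like $z \mapsto z^m$ for some positive integer $m$. Equivalently, I need to show that each connected component of the preimage of a punctured $2$-cell $D_i \setminus \{c_i\}$ is, topologically, a punctured disk (an annulus), with the covering map on it being the standard $m$-fold cyclic cover of the punctured disk.

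First I would set up the local picture: fix a $2$-cell $D_i$ with attaching map the relator $r_i$, a cyclically reduced word of length $|r_i| = \ell_i$. A component of the preimage of $D_i \setminus \{c_i\}$ in $X(\sigma)$ is built from the lifted boundary loop $\partial D_{ij}$, which by construction traverses the word $r_i$ repeatedly in the cover $\widehat{X^{(1)}}$ until it closes up; say it closes after $m = \ind(D_{ij})$ passes, so $\ell(\partial D_{ij}) = m \ell_i$. Then the corresponding component of the preimage of $D_i \setminus \{c_i\}$ deformation retracts onto this loop, and the covering map sends it to $D_i \setminus \{c_i\}$, which deformation retracts onto $\partial D_i$ (the circle traversing $r_i$ once). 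The map of circles induced here is exactly the degree-$m$ covering $S^1 \to S^1$. Since a connected covering of the punctured disk of degree $m$ is unique up to isomorphism and is the standard $z \mapsto z^m$ model, this identifies the local form, provided the preimage component is actually a punctured disk and not something with more topology.

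The key point making this work — and where the hypothesis that no relator is a proper power enters — is that the lifted loop $\partial D_{ij}$, although it may wrap $m$ times around an underlying shorter loop in $\widehat{X^{(1)}}$, is genuinely glued into $X(\sigma)$ as the boundary of a single disk of perimeter $m\ell_i$, so the preimage piece is literally (disk of circumference $m\ell_i$) minus its center, which is a punctured disk; the covering map to $D_i \setminus \{c_i\}$ is the $m$-fold cyclic cover because the attaching map is the $m$-th power of $r_i$ read around this boundary. Here I must use that $r_i$ is not a proper power in $\pi_1(X^{(1)})$: this guarantees that the word $r_i$ read $m$ times around $\partial D_{ij}$ in the graph $\widehat{X^{(1)}}$ has combinatorial length exactly $m\ell_i$ and is cyclically reduced of that length (no hidden periodicity creates a shorter closed lift that would be attached with a fractional or branched-index mismatch), so the local degree $m$ is well-defined and the branched index is the honest integer $\ind(D_{ij})$. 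Example~\ref{example:projective-plane} shows precisely what goes wrong otherwise: when $r = x^2$ is a proper power and $n$ is even, the "lifted loop" of length $n$ closes up after wrapping only once around $C_n$ while the word $x^2 \cdots x^2$ has shrunk, so the preimage of $D \setminus \{c\}$ is a genuine disk rather than a punctured disk, and no branch point exists.

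I would then assemble these local verifications: the branch locus $B$ is finite and disjoint from $X^{(1)}$ (the $c_i$ are centers of the $2$-cells), over $X \setminus B$ the map is an $n$-fold covering since it is the disjoint union of the $n$-fold graph covering $\widehat{X^{(1)}} \to X^{(1)}$ with the (punctured) $2$-cell pieces each of which covers its target with total degree $n$ (the indices $\ind(D_{ij})$ over all lifts of $D_i$ sum to $n$, by counting vertices/sheets), and the local model at each $c_{ij}$ is $z \mapsto z^{\ind(D_{ij})}$ by the previous paragraph. I expect the main obstacle to be making the "preimage of a punctured $2$-cell is a punctured disk" claim airtight: one must argue that the connected component containing $c_{ij}$ contains exactly that one puncture and has the annular topology, which is where the non-proper-power condition does the real work in ruling out the degenerate collapse of Example~\ref{example:projective-plane}. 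Once that topological identification is in hand, matching it with the standard $m$-fold cover of the punctured disk and upgrading to the $z \mapsto z^m$ chart is routine.
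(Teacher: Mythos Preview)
Your diagnosis is backwards. The local model $z\mapsto z^m$ at each preimage $c_{ij}$ of a branch point holds automatically: by construction each $D_{ij}$ is a genuine disk of perimeter $m\ell_i$ with $m=\ind(D_{ij})$, and the map $D_{ij}\to D_i$ is defined to be the degree-$m$ branched cover extending the $m$-fold boundary covering radially. There is nothing to prove there, and the ``no proper power'' hypothesis is not needed for it. What can fail, and what you dismissed as holding ``by design,'' is precisely that $X(\sigma)\setminus f^{-1}(B)\to X\setminus B$ is an honest $n$-fold covering. The problem occurs along the $1$-skeleton: at a point interior to an edge $e\subset X^{(1)}$ the local model is $I\times C_k$ where $k$ counts occurrences of $e$ in all boundary labels, and for the map to be a local homeomorphism near a lift $e_i$ you need the corresponding count $k_i$ in $X(\sigma)$ to equal $k$. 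In Example~\ref{example:projective-plane} with $n$ even, the single lifted disk has each edge $x_i$ appearing \emph{once} in its boundary while $x$ appears \emph{twice} in $\partial D$, so $k_i=1<2=k$; equivalently the degree over the open $2$-cell is only $n/2$, not $n$. Your description of that example (``the preimage of $D\setminus\{c\}$ is a genuine disk rather than a punctured disk'') is incorrect: the preimage is still a punctured disk, the failure is a degree deficit.

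The paper's proof therefore focuses entirely on the edge count. One shows $k_i\le k$ by observing that each occurrence of $e$ in some $\partial D$ lifts to at most one occurrence of $e_i$ among the lifts of $D$. The reverse inequality $k_i\ge k$ is where the hypothesis enters: two distinct occurrences of $e$ in disk boundaries form a sub-overlap, and the bound of Lemma~\ref{lem-bound-on-overlap-length} (which uses that no relator is a proper power) forces the corresponding lifts based at $e_i$ to eventually diverge, hence to lie on distinct lifted disks. This yields $k_i=k$, so the flap counts match and the map is a covering over the $1$-skeleton; the vertex case then follows. Your sketch does not address this at all.
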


\begin{proof}
     We first note that there is a natural map $X(\sigma) \rightarrow X$ induced by the covering map on the $1$-skeleton, extended to a branched cover from each $2$-cell in $X(\sigma)$ to its image $2$-cell in $X$. It is straightforward to check that every point in $X$ that is {\bf not} the center of a $2$-cell has exactly $n$ pre-images, where $n$ is degree of the covering on the $1$-skeleton. By construction, in the interior of the $2$-cells the map is a branched covering. So we are left with checking the local topology along pre-images of edges, and at vertices.

    In a polygonal $2$-complex, the local topology at a point inside an edge is easy to describe. Let $k$ denote the number of occurrences of that edge (and its inverse) along the boundary labels of all the attached $2$-disks. Then locally, a closed neighborhood of the point is homeomorphic to the product $I \times C_k$, where $C_k$ is the cone over a discrete set of $k$ points. Under this identification the edge corresponds to the product of $I$ times the cone point. 

    So to decide whether the canonical map $X(\sigma) \rightarrow X$ is a covering map along the interior of edges, it suffices to compare, for an edge $e \subset X^{(1)}$ and a fixed pre-image edge $\bar e \subset X(\sigma)^{(1)}$, the number of occurrences of those edges along labels on the boundaries of the disks in $X$ and $X(\sigma)$ respectively. Let $k$ denote the number of occurrences of the edge $e$, and $\bar k$ denote the number of occurrences of the edge $\bar e$. Our goal is to show $\bar k=k$.

    Given a $2$-cell in $X$, obtained by attaching a disk $D$, we can fix an occurrence of the edge $e$ along $\partial D$. Consider all the disks $D_1, \ldots , D_m$ in $X(\sigma)$ lying above $D$. These disks are attached along all the possible lifts of the curve $\partial D$. Corresponding to these lifts, we have covering maps $\partial D_j \rightarrow \partial D$, and the fixed occurrence of the edge $e$ in $\partial D$ has pre-images in the $\partial D_1, \ldots , \partial D_m$. Since these are all possible lifts of $\partial D$, these pre-images of the fixed occurrence of $e$ in $D$ all have distinct indices -- so at most one of these lifts can be $\bar e$. Since each occurrence of $e$ along a disk $D$ gives rise to at most one occurrence of $\bar e$ along one of the lifts of $D$, we obtain the inequality $\bar k \leq k$.
    
    As a cautionary note, observe that without the hypothesis that the complex is acceptable, it is possible that {\it none} of the pre-images has a given label $e_i$ (see Example \ref{example:projective-plane}). So for non-acceptable complexes, the inequality $\bar k\leq k$ could be strict. This is essentially due to the fact that $\sum \deg(\partial D_i\rightarrow \partial D)$ could potentially be smaller than $n$, so that the number of pre-images of $e$ in the $\coprod \partial D_i$ might be less than $n$. This also tells us that, to prove equality, we will need to make use of the ``acceptable'' hypothesis.

    Conversely, for each distinct occurrence of $e$ along a boundary label of a disk in $X$, we can look at the attaching map for that disk, viewed as a loop starting at $e$. We can then lift that loop starting at the chosen pre-image edge $\bar e$, and find the attaching map for a disk in $X(\sigma)$. For a general polygonal $2$-complex, there is no guarantee that the lifted attaching maps starting at distinct occurrences of $e$ give you distinct attaching loops in $X(\sigma)$, see e.g. Example \ref{example:projective-plane}. However, any two distinct occurrences of the edge $e$ in the label of disks $D_i, D_j$ form a sub-overlap. In an acceptable $2$-complex, Lemma \ref{lem-bound-on-overlap-length} tells us this sub-overlap extends to a maximal sub-overlap of bounded length. In particular, the lifted loops based at the two occurrences must eventually diverge, so they correspond to {\bf distinct} lifted disks in $X(\sigma)$. This implies that, in an acceptable $2$-complex, we also have the inequality $\bar k\geq k$, which establishes the equality $\bar k=k$.
    
    Finally, from the covering condition on edges, it is straightforward to check the covering condition also holds at vertices. We leave the details to the interested reader.    
\end{proof}

\begin{remark}\label{rem:withmult}
We conclude this section by discussing how to extend our random model to the case where the original $2$-complex $X$ is {\bf not} acceptable. Consider the situation where a disk is attached along a map that is a proper power in $\pi_1(X^{(1)})$, e.g. the attaching map is along $\alpha = w^k$ for some word $w$ and $k\geq 2$. We may assume that $w$ itself is non-trivial, and not a proper power. As was discussed earlier, the key problem that can arise are issues with the multiplicity of the lifted disk along edges. To analyze this, consider a lift $\tilde w$ of the loop $w$ of degree $d$. Then the corresponding lift $\tilde \alpha$ of the attaching map is the power $\tilde w^r$, where $r=LCM(k,d)$. As a covering of $\alpha$, this lifted map has degree $r/k$ -- which can potentially be different from the desired degree $d$. As discussed in Example \ref{example:projective-plane}, there are several ways to resolve the local multiplicity issue, but we will focus on just one: attaching multiple copies of the lifted disk, each along the map $\tilde \alpha$. In order to get degree $d$, we need to attach $d/(r/k) = kd/r = dk/LCM(k,d)=GCD(k,d)$ copies of the disk along the same map $\tilde \alpha$. We call this the random model {\it with multiplicity}. 
\end{remark}

\medskip

We now have, for each natural number $n$, models that randomly produces a degree $n$ branched cover $X(\sigma)$ of the finite presentation $2$-complex $X$. We will be interested in understanding topological and geometric properties of $X(\sigma)$, as $n$ gets large.

\begin{definition}
Given an event $E=E_n$ depending on a parameter $n$, $E$ holds \textit{asymptotically almost surely} if it holds with probability $1-o(1)$. Thus the probability of success goes to $1$ in the limit as $n\rightarrow\infty$.
\end{definition}

We have defined a random model for acceptable {\it presentation} $2$-complexes. This model can easily be extended to general acceptable $2$-complexes, as detailed in Section \ref{sec:multiple-vertex-case} below.

\vskip 10pt

\subsection{Connectedness of branched covers}

Our random model associates to $t$-tuples of elements in the symmetric group, chosen independently with uniform distribution, a corresponding branched cover. Our goal is now to translate interesting properties of the branched cover $X(\sigma)$ into properties of the $t$-tuple in the symmetric group. We can then hope to leverage our understanding of random elements in symmetric groups to analyze whether or not the property holds for random branched covers in our model. As an easy example, let us consider the connectedness of the branched cover.

\begin{lemma}
    The branched cover $X(\sigma)$ is connected if and only if the subgroup generated by the permutations $\sigma_1 , \ldots ,\sigma_t$ acts transitively on the vertex set.
\end{lemma}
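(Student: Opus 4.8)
The plan is to reduce the statement to the classical dictionary between connected covers of a graph and transitive monodromy actions, combined with the elementary fact that attaching $2$-cells to a space does not change its set of path-components.

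First I would recall that, by construction of the random labeled branched cover model, the $1$-skeleton of $X(\sigma)$ is exactly the degree $n$ cover $\rho\colon \widehat{X^{(1)}} \to X^{(1)}$ determined by the tuple $\sigma = (\sigma_1,\dots,\sigma_t)$: here $X^{(1)}$ is a bouquet of $t$ circles, its fundamental group is free of rank $t$ on the loops $x_1,\dots,x_t$, and the monodromy homomorphism $\pi_1(X^{(1)}) \to \Sym(n)$ sends $x_i$ to $\sigma_i$, so its image is precisely the subgroup $\langle \sigma_1,\dots,\sigma_t\rangle$. Covering space theory for the connected graph $X^{(1)}$ then says that $\widehat{X^{(1)}}$ is connected if and only if this monodromy image acts transitively on the fiber $V = \{v_1,\dots,v_n\}$ over the basepoint. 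Concretely, two vertices $v_a$ and $v_b$ lie in the same component of $\widehat{X^{(1)}}$ exactly when some edge-path joins them; such a path projects to a loop in $X^{(1)}$ whose monodromy carries $a$ to $b$, and conversely every word in the $\sigma_i^{\pm 1}$ is realized by lifting the corresponding loop starting at $v_a$. Hence $\widehat{X^{(1)}}$ is connected $\iff$ $\langle \sigma_1,\dots,\sigma_t\rangle$ is transitive on $V$.

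Next I would observe that $X(\sigma)$ is obtained from $\widehat{X^{(1)}}$ by attaching the $2$-cells $D_{ij}$ along their lifted attaching loops $\partial D_{ij}$, as described in Section~\ref{subsec:brcov}. For each such cell the attaching map $\partial D_{ij} \to \widehat{X^{(1)}}$ has connected domain $\partial D_{ij} \cong S^1$, so its image is connected and therefore lies entirely inside a single path-component of $\widehat{X^{(1)}}$; attaching a connected cell along a connected subset of a single component cannot merge two distinct components. Iterating over all the $2$-cells, the path-components of $X(\sigma)$ are in natural bijection with those of $\widehat{X^{(1)}}$. In particular $X(\sigma)$ is connected if and only if $\widehat{X^{(1)}}$ is connected, and combining this with the previous paragraph yields the claim.

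There is no real obstacle here: the only points that warrant a line of care are that the $1$-skeleton of $X(\sigma)$ is genuinely the full cover $\widehat{X^{(1)}}$ (so that no vertex of $V$ is omitted or isolated) and that every $2$-cell is glued along an honest loop rather than a more complicated subcomplex, both of which are immediate from the construction. For a general (multi-vertex) acceptable polygonal $2$-complex one first contracts a spanning tree of $X^{(1)}$ to pass to a one-vertex complex, as in Section~\ref{sec:multiple-vertex-case}, which does not affect connectedness, and then applies the one-vertex case verbatim.
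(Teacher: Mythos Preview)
Your proposal is correct and follows essentially the same approach as the paper: reduce to connectedness of the $1$-skeleton (since attaching $2$-cells along loops does not change the set of path-components), and then identify edge-paths in the cover $\widehat{X^{(1)}}$ with words in the $\sigma_i^{\pm1}$ to equate connectedness with transitivity of $\langle\sigma_1,\dots,\sigma_t\rangle$. You spell out the monodromy and the cell-attaching step in a bit more detail than the paper does, but the logical skeleton is identical.
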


\begin{proof}
    Connectedness of $X(\sigma)$ is completely determined by connectedness of its $1$-skeleton. Given an edge path joining a pair of vertices, one can read off the corresponding product of permutations (and their inverses) taking the initial vertex to the terminal vertex. So if $X(\sigma)$ is connected, then $\langle \sigma_1, \ldots, \sigma_t\rangle$ acts transitively on the vertex set. Conversely, if the subgroup acts transitively on the vertex set, then given any two vertices in $X(\sigma)$ we can find a product of permutations (and their inverses) taking one of these vertices to the other. This then gives us a sequence of edges connecting the two vertices, showing the $1$-skeleton of $X(\sigma)$ is connected. 
\end{proof}

So understanding connectedness of our random branched covers is completely equivalent to understanding when a randomly selected $t$-tuple of elements in $\Sym(n)$ generates a transitive subgroup. This is a classically studied problem, and we have the following result of Dixon \cite{dixon1969probability}:

\begin{proposition}\label{prop.conn}
The proportion of ordered pairs $(\sigma_a,\sigma_b)$, where  $\sigma_a,\sigma_b\in \Sym(n)$ generate a transitive subgroup of $\Sym(n)$ is $1-\frac{1}{n}+O(\frac{1}{n^2})$ as $n\rightarrow\infty$.
\end{proposition}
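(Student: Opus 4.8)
The plan is to follow Dixon's original counting argument, estimating the proportion of pairs $(\sigma_a, \sigma_b) \in \Sym(n)^2$ that generate an \emph{intransitive} subgroup, and showing this proportion is $\frac{1}{n} + O(\frac{1}{n^2})$. The key observation is that if $\langle \sigma_a, \sigma_b\rangle$ is intransitive, then the vertex set $V = \{1, \ldots, n\}$ decomposes into a nontrivial union of orbit blocks; in particular there is a proper nonempty subset $W \subsetneq V$ that is invariant under both $\sigma_a$ and $\sigma_b$. Conversely, the existence of such an invariant set certifies intransitivity. So I would bound
$$
\mathbb{P}(\text{intransitive}) \le \sum_{k=1}^{n-1} \binom{n}{k} \cdot \mathbb{P}\big(\sigma_a(W_k) = W_k\big) \cdot \mathbb{P}\big(\sigma_b(W_k) = W_k\big),
$$
where $W_k$ is a fixed $k$-element subset, since for a fixed set $W$ of size $k$ the probability that a uniformly random permutation stabilizes $W$ setwise is $\frac{k!\,(n-k)!}{n!} = \binom{n}{k}^{-1}$, and $\sigma_a, \sigma_b$ are independent. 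This gives the clean bound $\mathbb{P}(\text{intransitive}) \le \sum_{k=1}^{n-1} \binom{n}{k}^{-1}$.

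The next step is to analyze the sum $S_n = \sum_{k=1}^{n-1}\binom{n}{k}^{-1}$. The dominant contributions come from the extreme terms $k=1$ and $k=n-1$, each equal to $\frac{1}{n}$, contributing $\frac{2}{n}$ in total; the $k=2$ and $k=n-2$ terms contribute $\frac{2}{\binom{n}{2}} = O(\frac{1}{n^2})$, and the remaining interior terms ($3 \le k \le n-3$) are each at most $\binom{n}{3}^{-1} = O(\frac{1}{n^3})$ with $O(n)$ of them, so they contribute $O(\frac{1}{n^2})$ as well. Hence $S_n = \frac{2}{n} + O(\frac{1}{n^2})$, giving the upper bound $\mathbb{P}(\text{intransitive}) \le \frac{2}{n} + O(\frac{1}{n^2})$. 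This alone is not sharp enough — it has the wrong constant — so the argument must be refined by inclusion–exclusion, or more cleverly, by counting only \emph{minimal} invariant blocks, or by the standard trick of noting that an intransitive pair has an invariant set $W$ with $1 \le |W| \le n/2$ together with a complementary structure, and using a more careful first-moment computation on the number of orbits. Dixon's sharper count isolates the leading term: the pairs that fail transitivity "because of a single fixed point or a single two-block split" dominate with coefficient exactly $1$, not $2$, once one avoids double-counting the decomposition $W \sqcup W^c$.

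The main obstacle, and the only genuinely delicate point, is pinning down the exact leading constant $1$ (rather than $2$ or some other value) in $1 - \frac{1}{n} + O(\frac{1}{n^2})$. The crude union bound over all invariant subsets double-counts: an intransitive pair typically admits \emph{both} $W$ and $V \setminus W$ as invariant sets, so the extreme terms $k=1$ and $k=n-1$ are really counting the same phenomenon from two sides. The fix is to count ordered set partitions or to restrict to invariant sets of size $\le \lfloor n/2 \rfloor$ and then handle the self-complementary case $n = 2k$ separately, or alternatively to run a genuine inclusion–exclusion (Bonferroni) estimate where the second-order term $\sum_{W \ne W'} \mathbb{P}(\sigma_a, \sigma_b \text{ fix both } W, W')$ is shown to be $O(\frac{1}{n^2})$. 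Carrying out this bookkeeping carefully yields the stated asymptotic; since the result is quoted from \cite{dixon1969probability}, one may alternatively cite it directly, but the first-moment skeleton above is the honest proof. For the application in the present paper, the only feature we need is that the proportion tends to $1$, i.e.\ that $S_n = o(1)$, which follows immediately and much more cheaply from $S_n \le \frac{2}{n} + O(\frac{1}{n^2}) \to 0$.
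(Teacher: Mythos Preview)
The paper does not prove this proposition at all: it is stated as a quotation of Dixon's result \cite{dixon1969probability} and used as a black box. So there is no ``paper's own proof'' to compare against; your proposal supplies strictly more than the paper does.

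Your sketch of Dixon's argument is essentially correct. The union bound over proper invariant subsets gives $\mathbb{P}(\text{intransitive}) \le \sum_{k=1}^{n-1}\binom{n}{k}^{-1}$, and you correctly diagnose why this overshoots by a factor of $2$: each intransitive pair is witnessed by both $W$ and its complement. Your suggested fix---restrict to invariant sets of size at most $\lfloor n/2\rfloor$---works cleanly, since any intransitive pair has an orbit of size $\le n/2$, giving $\mathbb{P}(\text{intransitive}) \le \sum_{k=1}^{\lfloor n/2\rfloor}\binom{n}{k}^{-1} = \tfrac{1}{n} + O(\tfrac{1}{n^2})$. For the matching lower bound (which you gesture at but do not spell out), one can simply estimate the probability that $\sigma_a$ and $\sigma_b$ share a common fixed point: by inclusion--exclusion this is $\tfrac{n}{n^2} - O(\tfrac{1}{n^2}) = \tfrac{1}{n} + O(\tfrac{1}{n^2})$, and a common fixed point forces intransitivity. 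Your final remark is also on point: the only consequence the paper draws (Corollary~\ref{cor.conn}) needs merely that the proportion tends to $1$, for which the crude bound $\tfrac{2}{n} + O(\tfrac{1}{n^2})$ already suffices.
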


Translating this result back to our model gives us:

\begin{cor}\label{cor.conn}
    Let $\Gamma=\langle u_1,\dots,u_t \mid r_1,\dots,r_s\rangle$, with corresponding presentation $2$-complex $X$. If $t\geq 2$, then $X(\sigma)$ is asymptotically almost surely connected.
\end{cor}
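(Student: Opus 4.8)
The plan is to deduce Corollary~\ref{cor.conn} directly from the preceding lemma (connectedness of $X(\sigma)$ is equivalent to transitivity of $\langle \sigma_1,\dots,\sigma_t\rangle$ on the vertex set $V = \{v_1,\dots,v_n\}$) together with Proposition~\ref{prop.conn} (Dixon's result). The only point requiring a small argument is that Dixon's statement is phrased for \emph{pairs} of permutations, whereas our presentation may have $t \geq 2$ generators. So the main content is a monotonicity observation: adding more permutations can only enlarge the generated subgroup, hence can only make it ``more transitive.''

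First I would recall the equivalence: by the lemma, $X(\sigma)$ is connected $\iff$ $\langle \sigma_1,\dots,\sigma_t\rangle$ acts transitively on $V$. Next, I would observe that the events are nested: if the subgroup $\langle \sigma_1,\sigma_2\rangle$ generated by the first two permutations already acts transitively on $V$, then a fortiori the larger subgroup $\langle \sigma_1,\dots,\sigma_t\rangle$ acts transitively, since $\langle \sigma_1,\sigma_2\rangle \leq \langle \sigma_1,\dots,\sigma_t\rangle$. Therefore
\[
    \mathbb{P}\big(X(\sigma)\text{ connected}\big) \;\geq\; \mathbb{P}\big(\langle \sigma_1,\sigma_2\rangle \text{ transitive on } V\big).
\]
Here I am using that the coordinates $\sigma_1,\dots,\sigma_t$ are chosen independently and uniformly from $\Sym(n)$, so the marginal distribution of $(\sigma_1,\sigma_2)$ is exactly the uniform distribution on $\Sym(n)\times\Sym(n)$, and Proposition~\ref{prop.conn} applies verbatim to this pair.

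Finally, by Proposition~\ref{prop.conn}, the right-hand side equals $1 - \tfrac{1}{n} + O(\tfrac{1}{n^2})$, which tends to $1$ as $n\to\infty$. Hence $\mathbb{P}(X(\sigma)\text{ connected}) = 1 - o(1)$, i.e. $X(\sigma)$ is asymptotically almost surely connected, which is the assertion of the corollary. (If one prefers to keep track of the error term, the same nesting inequality shows $\mathbb{P}(X(\sigma)\text{ connected}) \geq 1 - \tfrac{1}{n} + O(\tfrac{1}{n^2})$, and since probabilities are at most $1$ this is in fact a sharp-up-to-lower-order estimate, though only the $1-o(1)$ conclusion is needed.)

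I do not anticipate a genuine obstacle here; the one subtlety worth stating explicitly is the independence/marginalization step, ensuring that restricting attention to the subtuple $(\sigma_1,\sigma_2)$ does not disturb its being uniform on $\Sym(n)^2$ — this is immediate from the product structure of the measure on $\Sym(n)^t$. It is also worth noting the hypothesis $t\geq 2$ is used precisely so that there \emph{are} at least two coordinates to which Dixon's pair estimate can be applied; this matches the acceptability hypothesis that the rank of $\pi_1(X^{(1)})$ is at least two.
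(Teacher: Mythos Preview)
Your proposal is correct and follows essentially the same approach as the paper's proof: reduce to the lemma equating connectedness with transitivity, invoke Dixon's result for the pair $(\sigma_1,\sigma_2)$, and use the monotonicity observation that $\langle \sigma_1,\sigma_2\rangle \leq \langle \sigma_1,\dots,\sigma_t\rangle$ to handle $t\geq 3$. Your version is slightly more explicit about the marginalization step, but the argument is the same.
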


\begin{proof}
The $t=2$ case follows immediately from Proposition \ref{prop.conn}. For $t\geq 3$, it follows easily from the two generator case, because adding an additional generator just adds more edges to an already connected graph. Equivalently, if the first two elements $\sigma_1, \sigma_2$ already generate a transitive group, then adding additional generators $\sigma_3, \ldots ,\sigma_t$ does not change transitivity of the action.
\end{proof}

\begin{remark}\label{rmk-one-generator}
For the connectedness in Corollary \ref{cor.conn}, we had to assume that there is more than one generator. For the single generator case, random coverings are {\bf not} asymptotically almost surely connected. In particular, the cover will be connected if and only if the chosen permutation is an $n$ cycle. Thus, the probability of connectedness is $\frac{(n-1)!}{n!}$, which goes to $0$ when $n\rightarrow\infty$. 

This is the reason for our requirement that the rank of the $1$-skeleton is $\geq 2$ in our {\bf Main Theorem} (see definition of acceptable $2$-complex). Nevertheless, it is obvious that for a single generator case, any random branched covering has a Gromov hyperbolic fundamental group. Indeed, the presentation $2$-complex has $1$-skeleton consisting of a single loop. Thus any covering of the $1$-skeleton is just a disjoint union of cycles. It follows that each connected component will have fundamental group that is generated by a single element, hence cyclic. It will be isomorphic to $\mathbb{Z}$ if there are no relators, and isomorphic to a (potentially larger) finite cyclic group if there is at least one relator. In either case, the fundamental group is an (elementary) Gromov hyperbolic group.
\end{remark}

\vskip 10pt

\subsection{Overlaps in branched covering spaces}

Let $X$ be the presentation $2$-complex for the presentation 
\[\Gamma=\langle u_1,\dots,u_t \mid r_1,\dots,r_s\rangle\]
where $t\geq 2$
and let $f:X(\sigma)\rightarrow X$ be the $n$-fold branched covering obtained from the $t$-tuple of permutations $\sigma=(\sigma_1,\dots,\sigma_t)$, where $\sigma_i\in \Sym(n),i=1\dots t$. Recall that each disk $D_i\subset X$ corresponding to the relator $r_i$ has finitely many lifts $D_{ij}\subset X(\sigma)$ in the branched cover. For each lift $D_{ij}\subset X(\sigma)$, the index of $D_{ij}$, given by $\ind(D_{ij})$, is the branching index of $f$ at the center of $D_{ij}$.
Clearly, the sum $\sum_{j=1} \ind(D_{ij})=n$ for each $i=1,\dots,s$.

We are interested in studying the small cancellation properties for a random branched cover. We can view a piece $b$ in $X$ (respectively $X(\sigma)$) as a combinatorial path in the $1$-skeleton $X^{(1)}$ (resp. $X(\sigma)^{(1)}$). It is tempting to use the covering map $\rho:X(\sigma)^{(1)} \rightarrow X^{(1)}$ to compare pieces in $X(\sigma)$ with pieces in $X$. Unfortunately, the image of piece in $X(\sigma)$ might not be a piece in $X$, as it might map to a path that has length greater than the boundary of the disks. Similarly, the connected lift of a piece in $X$ might not be a piece in $X(\sigma)$, as it might be properly contained in an overlap in $X$. For this reason, it is more convenient to work with overlaps. As overlaps are defined in terms of the universal cover of the attaching loops, these behave better with respect to the branched covering map $\rho$ restricted to the one skeleton. Indeed, we have the following:

\begin{lemma}\label{lem-lifting-overlaps}
    Let $X$ be an acceptable presentation $2$-complex,
    $X(\sigma)$ a branched cover of $X$, and $\rho: X(\sigma) \rightarrow X$ the branched covering map. Then:
    \begin{itemize}
        \item if $(\bf{p}, \bf{p'})$ is an overlap for the pair of $2$-cells $D, D'$ in $X$, then $(\bf{p}, \bf{p'})$ is an overlap for $X(\sigma)$ for a pair of disks $\hat D, \hat D'$ that are branched covers of $D, D'$ respectively;
        \item if $(\bf{\hat p}, \bf{\hat p'})$ is an overlap in $X(\sigma)$ for the pair of disks $\hat D, \hat D'$, then the pair defines an overlap for the image pair of disks $D=\rho(\hat D), D'=\rho(\hat D')$ in $X$.
    \end{itemize}
\end{lemma}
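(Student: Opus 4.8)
The plan is to reduce everything to two standard facts. First, if $\hat D$ is a lift (in the sense of Section~\ref{subsec:brcov}) of a $2$-cell $D$ of $X$, then the boundary map $\partial \hat D \to \partial D$ is an honest $\ind(\hat D)$-fold covering of circles; consequently $\widetilde{\partial \hat D}$ and $\widetilde{\partial D}$ are both canonically $\mathbb R$ with the standard simplicial structure, the covering lifts to the identity of $\mathbb R$, and the lifted attaching maps are $\rho$-compatible: $\rho \circ \tilde{\hat\alpha} = \tilde\alpha$ as maps $\mathbb R \to X^{(1)}$, where $\tilde{\hat\alpha}$ is the lifted attaching map of $\hat D$. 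Second, $\rho$ restricted to $1$-skeleta is a covering map, so a combinatorial path in $X^{(1)}$ together with a choice of lift of its initial vertex determines a unique combinatorial path in $X(\sigma)^{(1)}$; conversely any combinatorial path in $X(\sigma)^{(1)}$ projects to one in $X^{(1)}$; and two lifts of the same path that agree at one point agree everywhere. Since an overlap is exactly a maximal pair of subpaths of these copies of $\mathbb R$ on which the lifted attaching maps coincide, the lemma amounts to saying that ``coincidence of lifted attaching maps on a subpath'' is transported faithfully by $\rho$ and its path-lifts. I may assume all overlaps have positive length, the length-zero case being vacuous.

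For the first bullet, let $(\mathbf p, \mathbf{p'})$ be an overlap for $D, D'$ in $X$. Because $X$ has a single vertex, the initial vertices of $\mathbf p$ and $\mathbf{p'}$ both lie over it; choosing any lift $v_a$ in $X(\sigma)^{(1)}$ and lifting the combinatorial paths $\tilde\alpha|_{\mathbf p}$ and $\tilde\beta|_{\mathbf{p'}}$ starting at $v_a$ produces the \emph{same} path $\hat{\mathbf p}$, since they are the same labeled path in $X^{(1)}$ with the same initial vertex. Reading the appropriate cyclic rotation of the relator of $D$ forward and backward off $\hat{\mathbf p}$ determines a unique lifted disk $\hat D$ of $D$, and likewise a lifted disk $\hat D'$ of $D'$, so that $\mathbf p \subset \widetilde{\partial\hat D} = \mathbb R$, $\mathbf{p'}\subset\widetilde{\partial\hat D'} = \mathbb R$, and $\tilde{\hat\alpha}|_{\mathbf p} = \hat{\mathbf p} = \tilde{\hat\beta}|_{\mathbf{p'}}$, where $\tilde{\hat\beta}$ is the lifted attaching map of $\hat D'$. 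Thus $(\mathbf p, \mathbf{p'})$ is a sub-overlap upstairs for $\hat D, \hat D'$. For maximality, a strict extension $(\mathbf q, \mathbf{q'})$ upstairs would, upon composing with $\rho$ and using $\rho\circ\tilde{\hat\alpha}=\tilde\alpha$, $\rho\circ\tilde{\hat\beta}=\tilde\beta$, give $\tilde\alpha|_{\mathbf q} = \tilde\beta|_{\mathbf{q'}}$, a strictly larger sub-overlap downstairs, contradicting that $(\mathbf p, \mathbf{p'})$ was an overlap.

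For the second bullet, let $(\hat{\mathbf p}, \hat{\mathbf{p'}})$ be an overlap for $\hat D, \hat D'$ in $X(\sigma)$, put $D = \rho(\hat D)$, $D' = \rho(\hat D')$, and view $\hat{\mathbf p}, \hat{\mathbf{p'}}$ inside $\widetilde{\partial D} = \widetilde{\partial\hat D} = \mathbb R$ and $\widetilde{\partial D'} = \widetilde{\partial\hat D'} = \mathbb R$. Composing the coinciding lifted attaching maps with $\rho$ shows $\tilde\alpha$ and $\tilde\beta$ coincide on $\hat{\mathbf p}, \hat{\mathbf{p'}}$, giving a sub-overlap $(\mathbf p,\mathbf{p'})$ for $D, D'$. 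For maximality, if $(\mathbf p,\mathbf{p'})$ extended to a strictly larger sub-overlap $(\mathbf q,\mathbf{q'})$ downstairs, then restricting the already-defined maps $\tilde{\hat\alpha},\tilde{\hat\beta}$ to $\mathbf q,\mathbf{q'}\subset\mathbb R$ gives two lifts of the single path $\tilde\alpha|_{\mathbf q}=\tilde\beta|_{\mathbf{q'}}$ which agree on $\mathbf p$ (resp.\ $\mathbf{p'}$), hence agree everywhere by uniqueness of path-lifting; this contradicts maximality upstairs.

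The main obstacle is not the topology above but the bookkeeping with the definitional side-conditions of a (sub-)overlap, which bite only when $D=D'$ downstairs. One must rule out a maximal sub-overlap upstairs projecting to the excluded ``degenerate'' configuration downstairs (two translates of one segment of $\tilde\alpha$), and conversely must know the extensions produced above are themselves \emph{valid} (sub-)overlaps. The decisive point is that a lift $\hat D$ of $D_i$ has $\ind(\hat D)$ equal to the order, under the permutation of the vertex set induced by $r_i$, of the breakpoint vertex on its attaching loop. Hence if two initial points of subpaths of $\mathbb R$ differ by a multiple $m|r_i|$ of the downstairs period (so they are identified in $\partial D$), then either $\ind(\hat D)\mid m$, so they are identified in $\partial\hat D$ as well (the upstairs overlap was degenerate too), or the vertex labels along the two translates never match, forcing the overlap length to be zero; so a genuine maximal overlap upstairs never collapses downstairs. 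When $\hat D\neq\hat D'$ but $D=D'$, a positive-length overlap forces the two initial breakpoints into a common $r_i$-cycle, contradicting $\hat D\neq\hat D'$, so the initial points stay in distinct orbits in $\partial D$. Finally, the identification $\widetilde{\partial\hat D}=\widetilde{\partial D}=\mathbb R$ is orientation-preserving and $\rho$ and its path-lifts ignore orientation, so the orientation clauses of the definition transfer verbatim, and in particular the extensions in the two maximality arguments are valid (sub-)overlaps on both levels.
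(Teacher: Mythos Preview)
Your proof is correct and follows essentially the same covering-space argument as the paper: identify $\widetilde{\partial\hat D}$ with $\widetilde{\partial D}$, transport coincidence of the lifted attaching maps via $\rho$ and its path-lifts, and deduce maximality from the fact that an extension on one level forces an extension on the other. You are in fact more careful than the paper about the definitional side-conditions in the $D=D'$ case (distinct $\pi_1(\partial D)$-orbits for the initial points), which the paper's proof leaves implicit; your breakpoint/cycle argument handling this is correct.
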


\begin{proof}
Recall that the overlap $(\bf{p}, \bf{p'})$ is actually a pair of (equivalence classes of) subpaths ${\bf p} \subset \widetilde{\partial D}$ and ${\bf p'} \subset \widetilde{\partial D'}$, with the property that the lifted attaching maps $\tilde \alpha:\widetilde{\partial D} \rightarrow X^{(1)}$, $\tilde \beta :\widetilde{\partial D'} \rightarrow X^{(1)} $ coincide on the subpaths $\bf{p}, \bf{p'}$. Given any pre-image $v_i$ of the unique vertex $v\in X^{(1)}$, covering space theory tells us we can lift the maps $\tilde \alpha, \tilde \beta$ to maps $\bar \alpha : \widetilde{\partial D} \rightarrow X(\sigma)^{(1)}, \bar \beta: \widetilde{\partial D'} \rightarrow X(\sigma)^{(1)}$ based at the vertex $v_i$. Since the original maps $\tilde \alpha, \tilde \beta$ coincide on the subpaths $\bf{p}, \bf{p'}$, the lifted maps will have the same property. Moreover, since the original maps $\tilde \alpha, \tilde \beta$ {\it differ} on the two edges immediately preceding (respectively following) the subpaths $\bf{p}, \bf{p'}$, the same property holds for the lifted maps. Finally, we note that the lifted maps $\bar \alpha, \bar \beta$ are periodic, as they will cover one of the connected lifts $\hat \alpha, \hat \beta$ of the attaching maps $\alpha, \beta$ (the lifts based at the vertex $v_i$). These lifted attaching maps 
$\hat \alpha: \widehat{\partial D} \rightarrow X(\sigma)^{(1)}$, $\hat \beta: \widehat{\partial D'} \rightarrow X(\sigma)^{(1)}$ 
are defined on finite covers $\widehat{\partial D} \rightarrow \partial D$ and $\widehat{\partial D'} \rightarrow \partial D'$. These 
define a pair of $2$-cells $\hat D, \hat D'$ in $X(\sigma)$. We conclude that the pair $(\bf{p}, \bf{p'})$ is an overlap for the $2$-cells $\hat D, \hat D'$.

Conversely, if we have an overlap $(\bf{p}, \bf{p'})$ in $X(\sigma)$ for a pair of $2$-cells $\hat D, \hat D'$, we can project the overlap via the covering map $\rho$. More precisely, from the construction of $X(\sigma)$, the attaching maps $\hat \alpha: \partial \hat D \rightarrow X(\sigma)^{(1)}$, $\hat \beta: \partial \hat D' \rightarrow X(\sigma)^{(1)}$ are lifts of the attaching maps $\alpha: \partial D \rightarrow X^{(1)}$, $\beta: \partial D' \rightarrow X^{(1)}$ for the pair of $2$-cells $D, D'$ in $X$. This means there are finite covering maps $\pi: \partial \hat D \rightarrow \partial D$, $\pi': \partial \hat D' \rightarrow \partial D'$, and commutative diagrams $\alpha \circ \pi = \rho \circ \hat \alpha$, $\beta \circ \pi' = \rho \circ \hat \beta$. 

The covering map $\pi$ allow us to identify the universal covers of $\partial \hat D$ and $\partial D$, via the lift of the covering map $\widetilde \pi: \widetilde{\partial \hat D} \rightarrow \widetilde{\partial D}$ (and similarly for $\pi'$, $\partial \hat D'$, and $\partial D'$). With this identification, the map $\bar \alpha: \widetilde{\partial \hat D} \rightarrow X(\sigma)^{(1)}$ descends to a map $\tilde \alpha:=\rho \circ \bar \alpha \circ \tilde \pi^{-1}: 
\widetilde{\partial D} \rightarrow X^{(1)}$. Similarly, we have a map $\tilde \beta := \rho \circ \bar \beta \circ (\tilde \pi')^{-1}: \widetilde{\partial D'} \rightarrow X^{(1)}$. Since the maps $\bar \alpha, \bar \beta$ agree on the subpaths $\bf{p}, \bf{p'}$ but differ on the immediately preceding (and immediately following) edges, the same property is true for the composite maps $\rho \circ \bar \alpha, \rho \circ \bar \beta$. Using the identification $\tilde \pi, \tilde \pi'$ of universal covers, we can view $\bf{p}, \bf{p'}$ as subpaths in $\widetilde{\partial D}, \widetilde{\partial D'}$. This shows that $(\bf{p}, \bf{p'})$ defines an overlap for the $2$-cells $D, D'$, completing the proof of the Lemma.
\end{proof}

An immediate consequence of the lemma is the following

\begin{cor}\label{rmk-piecelength}  
    Let $X$ be an acceptable presentation
    $2$-complex, and $X(\sigma)$ a branched cover of $X$. If the $2$-cell $\bar D$ in $X(\sigma)$ is an index $k$ branched cover of the $2$-cell $D$ in $X$, then the overlap ratios are related by $o(\bar D) = o(D)/k$. In particular, $o(X(\sigma)) \leq o(X)$.
\end{cor}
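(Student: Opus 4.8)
The plan is to directly compare overlaps of $\bar D$ in $X(\sigma)$ with overlaps of $D$ in $X$ using Lemma~\ref{lem-lifting-overlaps}, and to track how the combinatorial lengths transform. First I would fix the key length relation: since $\bar D$ is an index $k$ branched cover of $D$, the attaching map $\partial \bar D \to \partial D$ is a degree $k$ cover of cycle graphs, so $\ell(\partial \bar D) = k \cdot \ell(\partial D)$. Moreover, the universal covers $\widetilde{\partial \bar D}$ and $\widetilde{\partial D}$ are canonically identified (via $\widetilde\pi$ from the proof of Lemma~\ref{lem-lifting-overlaps}) in a way that is an isometry for the combinatorial metric on $\mathbb{R}$; under this identification a subpath ${\bf p} \subset \widetilde{\partial \bar D}$ and its image in $\widetilde{\partial D}$ have the same length $\ell({\bf p})$. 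The only thing that changes is the period: $\pi_1(\partial \bar D)$ acts by translation by $\ell(\partial\bar D) = k\ell(\partial D)$, while $\pi_1(\partial D)$ acts by translation by $\ell(\partial D)$.

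Next I would check that this identification sets up a bijection between overlaps. Given an overlap $({\bf p}, {\bf p'})$ for $(\bar D, \bar D')$ in $X(\sigma)$, Lemma~\ref{lem-lifting-overlaps} says the same pair of subpaths, viewed in $\widetilde{\partial D}, \widetilde{\partial D'}$, is a sub-overlap for $(D, D')$ where $D = \rho(\bar D)$, $D' = \rho(\bar D')$; maximality is preserved because the ``immediately preceding/following edges differ'' condition is local and survives the identification, so it is genuinely an overlap. Conversely every overlap of $D$ with any $D'$ lifts (again by Lemma~\ref{lem-lifting-overlaps}) to an overlap of $\bar D$ with some branched cover $\bar D'$ of $D'$. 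The subtlety here is that we must be slightly careful about the $D = D'$ self-overlap case and about equivalence classes under the respective $\pi_1$-actions, but since the length $\ell({\bf p})$ of the underlying path is what enters the overlap ratio, and $\ell({\bf p})$ is unchanged, these bookkeeping issues do not affect the final count: the set of lengths $\{\ell({\bf p})\}$ appearing in overlaps of $\bar D$ equals the set appearing in overlaps of $D$.

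Then the computation is immediate. For any overlap $({\bf p}, {\bf p'})$ we have
\[
\frac{\ell({\bf p})}{\ell(\partial \bar D)} = \frac{\ell({\bf p})}{k \, \ell(\partial D)} = \frac{1}{k}\cdot \frac{\ell({\bf p})}{\ell(\partial D)}.
\]
Taking the supremum over all overlaps $({\bf p},{\bf p'})$ — which by the previous paragraph ranges over the same collection of numerators $\ell({\bf p})$ whether we view them in $X(\sigma)$ or in $X$ — gives $o(\bar D, \bar D') = o(D, D')/k$ for each matched pair, and hence $o(\bar D) = \sup_{\bar D'} o(\bar D, \bar D') = (1/k)\sup_{D'} o(D, D') = o(D)/k$. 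Finally, every $2$-cell $\bar D$ of $X(\sigma)$ has index $k \geq 1$ over its image $D$, so $o(\bar D) = o(D)/k \le o(D) \le o(X)$, and taking the supremum over all $\bar D$ yields $o(X(\sigma)) \le o(X)$.

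The main obstacle I anticipate is purely bookkeeping rather than conceptual: verifying that the lifting/projection correspondence of Lemma~\ref{lem-lifting-overlaps} really is a bijection on the level of overlap \emph{lengths} (not necessarily on equivalence classes of overlaps themselves), in particular handling the self-overlap case $D = \bar D$ where distinct lifts of one disk could a priori contribute, and making sure no overlap of $\bar D$ can be \emph{longer} (relative to its own boundary) than the corresponding overlap of $D$ — which is exactly where the factor $1/k$ comes from and where one must use that $\ell(\partial \bar D) = k\,\ell(\partial D)$ while the path length $\ell({\bf p})$ is preserved.
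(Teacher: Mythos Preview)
Your proposal is correct and follows essentially the same approach as the paper: use Lemma~\ref{lem-lifting-overlaps} to identify the set of overlap lengths for $\bar D$ with that for $D$, then divide by the boundary-length factor $\ell(\partial\bar D)=k\,\ell(\partial D)$. The paper's proof is just a two-sentence version of exactly this; your extra care with the bijection on overlap lengths and the self-overlap case is fine but not strictly needed beyond what the lemma already provides.
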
 

\begin{proof}
    From the lemma, we see that the lengths of overlaps for $D$ coincide with the lengths of overlaps for $\bar D$. Since $\ell(\partial \bar D) = k\cdot \ell(\partial D)$, the result follows. 
\end{proof}

As we remarked earlier, the small cancellation condition $C'(\lambda)$ (where $0<\lambda <1$) for a $2$-cell $D$ is implied by $o(D)<\lambda$. So we immediately obtain:

\begin{cor} \label{lemma:sc}
If a $2$-cell in $X$ satisfies the $o(D)<\lambda$, then all of its lifts in $X(\sigma)$ satisfy $C'(\lambda)$. 
\end{cor}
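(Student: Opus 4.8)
The plan is to deduce this directly from Corollary~\ref{rmk-piecelength}, together with the observation made just after the definition of the overlap ratio that a strict bound on the overlap ratio implies the corresponding geometric small cancellation condition. Concretely, I would fix an arbitrary $2$-cell $\bar D$ in $X(\sigma)$ that is a lift of $D$, and let $k=\ind(\bar D)\geq 1$ be its index. Since by hypothesis $o(D)<\lambda$, Corollary~\ref{rmk-piecelength} yields $o(\bar D)=o(D)/k\leq o(D)<\lambda$, so it remains only to convert this bound on the overlap ratio into the statement that $\bar D$ satisfies $C'(\lambda)$.

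For that, I would spell out why $o(\bar D)<\lambda$ forces every piece of $\bar D$ to be short. Let $b$ be a piece of $\bar D$, i.e.\ a maximal subpath appearing with matching oriented labels in $\partial \bar D$ and $\partial \bar D'$ for some $2$-cell $\bar D'$ of $X(\sigma)$ (possibly $\bar D'=\bar D$). Lifting $b$ to the universal covers $\widetilde{\partial \bar D}$ and $\widetilde{\partial \bar D'}$ exhibits it as a sub-overlap of $\bar D$ with $\bar D'$, which is contained in some overlap $(\mathbf{p},\mathbf{p}')$; hence $\ell(b)\leq \ell(\mathbf{p})$. Dividing by $\ell(\partial \bar D)$ and passing to the supremum over overlaps gives $\ell(b)/\ell(\partial \bar D)\leq o(\bar D)<\lambda$. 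Since $b$ was an arbitrary piece of $\bar D$, the $2$-cell $\bar D$ satisfies $C'(\lambda)$, and as $\bar D$ was an arbitrary lift of $D$, the corollary follows.

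I do not expect any real obstacle here: the substantive work has already been carried out in Lemma~\ref{lem-lifting-overlaps} and Corollary~\ref{rmk-piecelength}. The only point that warrants a moment of care is to confirm that the notion of piece entering the definition of $C'(\lambda)$ for a single $2$-cell — which permits $\bar D'=\bar D$ with a different choice of base vertex, i.e.\ self-pieces — is genuinely captured by a sub-overlap in the sense used to define $o(\bar D)$; this is exactly the clause in the definition of sub-overlap allowing $D=D'$ provided the initial points lie in distinct $\pi_1(\partial D)$-orbits, so the bound $o(\bar D)<\lambda$ applies uniformly to all pieces. With that check in place, the argument is a one-line consequence of the preceding results.
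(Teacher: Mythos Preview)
Your proposal is correct and follows exactly the approach the paper intends: the corollary is stated without proof in the paper, as an immediate consequence of Corollary~\ref{rmk-piecelength} together with the earlier observation that $o(D)<\lambda$ implies $C'(\lambda)$ for a $2$-cell. Your write-up simply spells out that implication in more detail (including the self-piece check), which is fine but not strictly necessary.
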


\begin{remark}\label{good-worrisome-remark}
Consider a $2$-cell $D\subset X$ that {\bf does not} satisfy  $o(D)< \lambda$. If a lift $\overline{D}\subset X(\sigma)$ has index $k$ satisfying $k> \frac{o(D)}{\lambda}$, then by Corollary \ref{rmk-piecelength} we have
    $$o(\bar D) = \frac{o(D)}{k} < \lambda$$ 
so $\overline{D}$ satisfies $C'(\lambda)$ in $X(\sigma)$.
\end{remark}

We denote by $R_L$ (respectively $R_S$) the length of the longest (resp. shortest) relation in the presentation $\Gamma$. Let us introduce some constants associated to the presentation $2$-complex $X$.
    
By Lemma~\ref{lem-bound-on-overlap-length} there is a uniform bound on the length of overlaps in $X$. We introduce the parameter $\mathcal O := R_L^2 +R_L$, which serves as a global upper bound on the length of overlaps in $X$. In view of Lemma~\ref{lem-lifting-overlaps}, $\mathcal O$ also serves as an upper bound on the length of overlaps in {\bf any} of the branched covers $X(\sigma)$. Since all the relators in $\Gamma$ have length $\geq R_S$, we also obtain the upper bound on the overlap ratio $o(X) < \frac{\mathcal O}{R_S}$. We are looking for branched covers with overlap ratio less than $\lambda$. To this end, let us introduce the critical index $I:=\frac{\mathcal O}{\lambda R_S}$. From Remark \ref{good-worrisome-remark}, we know that any disk $D$ in a branched cover $X(\sigma)$ whose index is $\geq I$ automatically has $o(D)<\lambda$, thus satisfies $C'(\lambda)$. 

\begin{definition}\label{disks}
Given an acceptable presentation
$2$-complex $X$, and $\lambda\in(0,1)$, a disk $D$ in a branched cover $X(\sigma)$ is called a \textit{$\lambda$-good disk} if its index is greater than or equal to $I$. A disk that is not a good-disk is called a \textit{$\lambda$-worrisome disk}. We will typically be working with a fixed value of $\lambda$, and refer to $\lambda$-good disks as {\it good disk} and $\lambda$-worrisome disk as a {\it worrisome disk}.
\end{definition}

All the $\lambda$-good disks satisfy $C'(\lambda)$ small cancellation in $X(\sigma)$. However, this is not true for $\lambda$-worrisome disks, which may or may not satisfy the $C'(\lambda)$ small cancellation condition.

\vskip 10pt

\subsection{Disks in random branched covering spaces}

For the relators $r_1,\dots,r_s$ and a random choice of permutations $\sigma=(\sigma_1,\dots,\sigma_t)$, $\sigma_i\in \Sym(n)$, we define another type of permutation $r_i(\sigma)\in \Sym(n)$ that represents the structure of lifts of the disk $D_i$. In $X(\sigma)$, we first attach a disk for a lift of $D_i$ of the relator $r_i$ starting from the vertex $v_1$. Let $v_{i(1)}:=v_1$ and let $v_{i^j(1)}$ be the vertex that is arrived at after following the letters of $r_i$ a total of $j$ times. Let $k\geq 1$ be the smallest such that $v_{i^k(1)}=v_1$. Then $(v_{i(1)} \cdots v_{i^k(1)})$ forms a cycle of length $k$ with entries corresponding to the indices of the vertices appearing in the procedure. We repeat the same process at a vertex that is not already obtained in a previous cycle until there are no vertices remaining. Finally we get a permutation of $n$ elements and denote it $r_i(\sigma)$. One checks the cycle lengths of $r_i(\sigma)$ have a one-to-one correspondence with the indices of lifts of $D_i$. The permutation $r_i(\sigma)$ is the result of applying the {\it word map} $r_i : \Sym(n) \times \cdots \times \Sym(n) \rightarrow \Sym(n)$ (see \cite{hanany2023word}) to the $t$-tuple $\sigma$ of permutations. We will use the following result from \cite[Corollary 1.5]{hanany2023word}:

\begin{proposition}\label{expnumber}
    Let $k\geq 2$ be a fixed integer. If the permutation $r_i$ is not a proper power, then the expected number of cycles of length $k$ in $r_i(\sigma)$ is $\frac{1}{k}+\mathbf{O}(n^{-\pi(r_i)})$, where $\pi(r_i)\geq 2$ is the primitive rank defined in \cite{hanany2023word}. 
\end{proposition}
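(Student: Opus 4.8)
The plan is to reduce the count of $k$-cycles of the word-map image $r_i(\sigma)\in\Sym(n)$ to a count of fixed points of powers, and then feed in the known asymptotics for expected fixed-point counts of word maps. Write $w:=r_i$, so $w(\sigma)^{d}=w^{d}(\sigma)$ for every $d$. For a permutation $\tau$ let $N_k(\tau)$ be its number of $k$-cycles and $\mathrm{fix}(\tau)$ its number of fixed points; since the fixed points of $\tau^{m}$ are precisely the points lying in $\tau$-orbits of size dividing $m$, we have $\mathrm{fix}(\tau^{m})=\sum_{d\mid m}d\,N_d(\tau)$, and Möbius inversion gives $k\,N_k(\tau)=\sum_{d\mid k}\mu(k/d)\,\mathrm{fix}(\tau^{d})$. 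Taking $\tau=w(\sigma)$ and expectations,
\[
k\,\mathbb{E}\!\left[N_k\bigl(w(\sigma)\bigr)\right]=\sum_{d\mid k}\mu(k/d)\,\mathbb{E}\!\left[\mathrm{fix}\bigl(w^{d}(\sigma)\bigr)\right],
\]
so it suffices to control $\mathbb{E}[\mathrm{fix}(w^{d}(\sigma))]$ for the finitely many divisors $d$ of $k$.

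The input I would quote is the structure theory of word measures on symmetric groups (Nica, Puder and coauthors, and the sharp version in \cite{hanany2023word}): for a nontrivial cyclically reduced word $v=u^{d}$ with $u$ not a proper power, one has an expansion
\[
\mathbb{E}\!\left[\mathrm{fix}\bigl(v(\sigma)\bigr)\right]=\tau(d)+\frac{c(u)}{n^{\,\pi(u)-1}}+\mathbf{O}\!\bigl(n^{-\pi(u)}\bigr),
\]
where $\tau(d)$ is the number of divisors of $d$ (the orbit sizes dividing $d$), $\pi(u)$ is the primitive rank of the root, and $c(u)$ counts the rank-$\pi(u)$ critical (algebraic-extension) subgroups of $\langle u\rangle$. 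Two features matter: (i) by acceptability $w=r_i$ is not a proper power, so $\pi(w)\ge 2$; and (ii) the critical subgroups of a power $w^{d}$ coincide with those of the root $w$, so the $n^{-(\pi(w)-1)}$-order correction does not depend on $d$. Granting these, $\mathbb{E}[\mathrm{fix}(w^{d}(\sigma))]=\tau(d)+c\,n^{-(\pi-1)}+\mathbf{O}(n^{-\pi})$ with $\pi:=\pi(r_i)$ and $c$ independent of $d$.

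Assembling the Möbius sum would then finish the argument. The leading terms contribute $\sum_{d\mid k}\mu(k/d)\tau(d)=(\mu\ast\mathbf 1\ast\mathbf 1)(k)=\mathbf 1(k)=1$, using the Dirichlet convolutions $\tau=\mathbf 1\ast\mathbf 1$ and $\mu\ast\mathbf 1=\varepsilon$ (the Dirichlet identity, $\varepsilon(1)=1$ and $\varepsilon(m)=0$ for $m>1$). The $n^{-(\pi-1)}$-order terms contribute $c\,n^{-(\pi-1)}\sum_{d\mid k}\mu(k/d)=c\,n^{-(\pi-1)}\,\varepsilon(k)=0$ because $k\ge 2$, and the finitely many remaining $\mathbf{O}(n^{-\pi})$ errors survive the sum. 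Hence $k\,\mathbb{E}[N_k(w(\sigma))]=1+\mathbf{O}(n^{-\pi})$, i.e.\ the expected number of $k$-cycles of $r_i(\sigma)$ is $\tfrac1k+\mathbf{O}(n^{-\pi(r_i)})$. Note that for $k=1$ the same computation keeps the $n^{-(\pi-1)}$ term, which is exactly why the statement is restricted to $k\ge 2$.

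The main obstacle is the quoted input of the second paragraph: the refined asymptotics of $\mathbb{E}[\mathrm{fix}(v(\sigma))]$, and in particular the assertion that the subleading correction for a power $w^{d}$ is governed by the root $w$ alone (equivalently, that critical subgroups are insensitive to taking powers of a non-proper-power). This is precisely the content of \cite{hanany2023word}; everything else above is elementary bookkeeping. A more self-contained alternative would be to estimate $\mathbb{E}[N_k(w(\sigma))]$ directly, as the expected number of embedded index-$k$ lifts of the loop $\partial D_i$ into the random Schreier graph $X(\sigma)^{(1)}$, organizing the computation by the folded quotients of the $k$-fold cyclic cover of $\partial D_i$; this again yields the $\pi(r_i)$-governed error but requires the same core-graph combinatorics.
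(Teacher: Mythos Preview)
The paper does not prove this proposition at all: it is quoted verbatim as \cite[Corollary~1.5]{hanany2023word} and used as a black box. Your M\"obius-inversion reduction from $k$-cycle counts to fixed-point counts of powers is correct and standard, and the Dirichlet-convolution arithmetic ($\mu\ast\tau=\mathbf 1$, $\sum_{d\mid k}\mu(k/d)=0$ for $k\ge 2$) is fine.

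The gap is in your justification (ii). The assertion that ``the critical subgroups of a power $w^d$ coincide with those of the root $w$'' is false as written: for $d\ge 2$ the word $w^d$ is a proper power, so its primitivity rank is $\pi(w^d)=1$, and $\mathrm{Crit}(w^d)$ consists of the rank-one subgroups $\langle w^e\rangle$ with $e\mid d$, $e\neq d$. There are $\tau(d)-1$ of these, which is precisely what produces the constant $\tau(d)=1+(\tau(d)-1)\cdot n^{0}$ in the Puder--Parzanchevski expansion; they have nothing to do with $\mathrm{Crit}(w)$, which sits in rank $\pi(w)\ge 2$. What you actually need is that in the full algebraic-extension expansion of $\mathbb{E}[\mathrm{fix}(w^d(\sigma))]$ there is no contribution from ranks strictly between $1$ and $\pi(w)$, and that the rank-$\pi(w)$ contribution equals $|\mathrm{Crit}(w)|$ independently of $d$. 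This is true, but establishing it is exactly the work done in \cite{hanany2023word}; indeed the expansion you want is \emph{equivalent} to the $k$-cycle statement (run your M\"obius identity in the forward direction, $\mathrm{fix}(w^d(\sigma))=\sum_{j\mid d} j\,N_j(w(\sigma))$, and plug in the proposition together with the $j=1$ case from Puder--Parzanchevski). So the argument is circular: you are trading the citation of one corollary of the Hanany--Puder machinery for an equivalent one, with an incorrect heuristic bridging them. Your closing remark is the honest summary --- everything beyond the cited input really is bookkeeping, and the cited input really is the whole proposition.
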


\medskip

Let $L_n(k)$ be the total number of cycles of length at most $k$ in all of the permutations $r_1(\sigma),\dots,r_s(\sigma)\in Sym(n)$. Note that, by hypothesis, none of our $r_i$ are proper powers, so applying Proposition~\ref{expnumber}, we have
 \begin{align*}
     \mathbb{E}\big(L_n(k)\big)=s(1+\frac{1}{2}+\dots+\frac{1}{k})+\mathbf{O}(n^{-\pi(r_1)}+\dots+n^{-\pi(r_s)}).
 \end{align*}
 As a result, when $n\rightarrow\infty$ the expected value $\mathbb{E}(L_n(k))\rightarrow s(1+\frac{1}{2}+\dots+\frac{1}{k})$. Knowing the asymptotics of the expected value allows us to deduce information about the tails of the probability distributions.

\begin{lemma}\label{p_n,m}
    For any $\epsilon>0$, there exists $N,m\in\mathbb{N}$ such that if $n\geq N$, then 
$$\mathbb{P}\big(L_n(k)\leq m\big)>1-\frac{\epsilon}{2}.$$
\end{lemma}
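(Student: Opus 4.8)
The plan is to deduce the tail bound from the asymptotic computation of $\mathbb{E}(L_n(k))$ via Markov's inequality, after first fixing $k$. Recall that Proposition~\ref{expnumber} gives, for each fixed $k\geq 2$ and each relator $r_i$ (none of which is a proper power by acceptability), that the expected number of length-$k$ cycles in $r_i(\sigma)$ is $\tfrac1k + \mathbf{O}(n^{-\pi(r_i)})$ with $\pi(r_i)\geq 2$. Summing over cycle lengths $2,\dots,k$ and over the $s$ relators, and noting that length-$1$ cycles (fixed points) contribute at most one extra cycle per relator per vertex in a bounded way — more precisely, the expected number of fixed points of $r_i(\sigma)$ is $1+\mathbf O(n^{-\pi(r_i)})$ as well, this being the $k=1$ instance which one can either cite or absorb into the error term — we obtain
\begin{align*}
    \mathbb{E}\big(L_n(k)\big) = s\Big(1+\tfrac12+\cdots+\tfrac1k\Big) + \mathbf{O}\big(n^{-\pi(r_1)}+\cdots+n^{-\pi(r_s)}\big),
\end{align*}
which converges to the finite constant $C_k := s\big(1+\tfrac12+\cdots+\tfrac1k\big)$ as $n\to\infty$.

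Now I would argue as follows. Fix $\epsilon>0$. Since $\mathbb{E}(L_n(k)) \to C_k$, there is an $N_0$ so that $\mathbb{E}(L_n(k)) \leq C_k + 1$ for all $n\geq N_0$. Set $m := \big\lceil (2/\epsilon)(C_k+1)\big\rceil$. Because $L_n(k)$ is a nonnegative integer-valued random variable, Markov's inequality gives, for $n\geq N_0$,
\begin{align*}
    \mathbb{P}\big(L_n(k) > m\big) = \mathbb{P}\big(L_n(k) \geq m+1\big) \leq \frac{\mathbb{E}(L_n(k))}{m+1} \leq \frac{C_k+1}{m+1} < \frac{C_k+1}{(2/\epsilon)(C_k+1)} = \frac{\epsilon}{2}.
\end{align*}
Taking complements, $\mathbb{P}\big(L_n(k)\leq m\big) > 1 - \tfrac{\epsilon}{2}$ for all $n\geq N_0$, so the conclusion holds with $N := N_0$ and this choice of $m$.

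There is essentially no hard obstacle here; the statement is a soft consequence of the expected-value asymptotics together with Markov's inequality. The only point requiring a little care is the bookkeeping of the error term and the treatment of fixed points ($k=1$ cycles), which are not covered by the literal statement of Proposition~\ref{expnumber} (which assumes $k\geq 2$); one resolves this either by invoking the corresponding statement for fixed points from \cite{hanany2023word} or by observing that the number of fixed points of $r_i(\sigma)$ is also asymptotically almost surely bounded, which is all that is needed. A secondary bit of care: $m$ and $N$ genuinely depend on both $\epsilon$ and the fixed value of $k$, which is consistent with the statement of the lemma. Once these are noted, the Markov estimate above completes the argument.
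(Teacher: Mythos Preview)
Your argument is correct and is essentially the same as the paper's: both deduce the tail bound from the convergence of $\mathbb{E}(L_n(k))$ via the first-moment (Markov) inequality, with the paper phrasing it as a proof by contradiction while you apply Markov directly. Your version is arguably cleaner, and your remark about the $k=1$ case is a fair point of bookkeeping that the paper glosses over.
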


\begin{proof}
Suppose not. Then there exists $\epsilon>0$ with the property that for each $m$, we can find a sequence $n_j\rightarrow \infty$ such that $\mathbb{P}\big(L_{n_j}(k)\geq  m+1\big)>\frac{\epsilon}{2}$. We can now estimate the expectation of $L_{n_j}(k)$ from below:
\begin{align*}
     \mathbb{E}\big(L_{n_j}(k)\big)&=\sum_{i=0}^\infty i\mathbb{P}\big(L_{n_j}(k)=i\big)\\ &\geq\sum_{i=m+1}^\infty (m+1) \cdot \mathbb{P}\big(L_{n_j}(k)=i\big)\\ 
     &=(m+1) \cdot \mathbb{P}\big(L_{n_j}(k)\geq m+1\big)>(m+1) \cdot \frac{\epsilon}{2}
 \end{align*}
For a fixed choice of $m$, this estimate holds for all the $n_j$ in the sequence. Applying this to the specific case where $m=\frac{2s}{\epsilon}\cdot \big(1+\frac{1}{2}+\dots+\frac{1}{k}\big)$, we obtain an infinite sequence of integers $n_j\rightarrow \infty$ where
\begin{align*}
     \mathbb{E}\big(L_{n_j}(k)\big)&>(m+1) \cdot \frac{\epsilon}{2}\\
     &> s\big(1+\frac{1}{2}+\dots+\frac{1}{k}\big) + \frac{\epsilon}{2}
 \end{align*}

On the other hand, we saw earler that the expected value $\mathbb{E}(L_{n}(k))$ converges to $s(1+\frac{1}{2}+\dots+\frac{1}{k})$ as $n\rightarrow\infty$, giving us a contradiction. 
\end{proof}

Note that in the proof above, we are not assuming the existence of limiting distribution of $L_n(k)$, but using the expected value of $L_n(k)$ for finite $n\in\mathbb{N}$ and its limit as $n\rightarrow \infty$. We can interpret Proposition~\ref{expnumber} and Lemma \ref{p_n,m} in terms of random branched coverings. 

\begin{cor}\label{cor-number-of-small-disks}
    Let $X$ be an acceptable presentation $2$-complex. Then given any integer $k$, and $\epsilon >0$, we can find an integer $M = M(k, \epsilon)$ with the following property: for any $n$ sufficiently large, with probability at least $1-\frac{\epsilon}{2}$ a random degree $n$ branched cover $X(\sigma)$ contains at most $M$ disks of index less than or equal to $k$.
\end{cor}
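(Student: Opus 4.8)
The plan is to translate the statement directly into the language already set up: a ``disk of index $\leq k$'' in $X(\sigma)$ corresponds, via the correspondence between lifts of $D_i$ and cycles of the word-map permutation $r_i(\sigma)$, to a cycle of length $\leq k$ in one of the permutations $r_1(\sigma), \ldots, r_s(\sigma)$. Thus the number of such disks is exactly $L_n(k)$, the quantity studied in Lemma~\ref{p_n,m}. (One should be slightly careful about cycles of length $1$: a length-$1$ cycle in $r_i(\sigma)$ corresponds to a lift of $D_i$ that is attached along a single copy of the loop $r_i$, i.e. an index-$1$ disk, so these are legitimately counted, and $L_n(k)$ as defined counts cycles of length \emph{at most} $k$, including length $1$.) With this identification the corollary is essentially a restatement of Lemma~\ref{p_n,m}.

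Concretely, first I would fix the integer $k$ and the given $\epsilon > 0$. Apply Lemma~\ref{p_n,m} with this $\epsilon$ to obtain constants $N$ and $m$ such that for all $n \geq N$ we have $\mathbb{P}(L_n(k) \leq m) > 1 - \frac{\epsilon}{2}$. Then set $M := m$ and $N(k,\epsilon) := N$. For any $n \geq N$, the event $\{L_n(k) \leq M\}$ has probability greater than $1 - \frac{\epsilon}{2}$, and on this event the number of disks in $X(\sigma)$ of index at most $k$ is at most $M$, since that number equals $L_n(k)$ by the cycle–lift correspondence recorded just before Proposition~\ref{expnumber}. This gives exactly the asserted conclusion with $M = M(k,\epsilon)$ depending only on $k$ and $\epsilon$ (through Lemma~\ref{p_n,m}, which in turn uses Proposition~\ref{expnumber} and the hypothesis that no $r_i$ is a proper power, guaranteed by acceptability).

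There is no real obstacle here — the substantive work (the expected-value computation from \cite{hanany2023word} via Proposition~\ref{expnumber}, and the Markov-type tail argument) has already been done in the proof of Lemma~\ref{p_n,m}. The only point requiring a sentence of care is the bookkeeping of the correspondence ``cycles of $r_i(\sigma)$ of length $\ell$ $\leftrightarrow$ index-$\ell$ lifts of $D_i$'', including the edge case $\ell = 1$, so that counting disks of index $\leq k$ in $X(\sigma)$ is genuinely the same as counting all cycles of length $\leq k$ across $r_1(\sigma), \ldots, r_s(\sigma)$; once that is stated, the corollary follows immediately.
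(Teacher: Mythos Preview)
Your proposal is correct and matches the paper's approach exactly: the paper presents this corollary as a direct reinterpretation of Lemma~\ref{p_n,m} via the cycle--lift correspondence, and that is precisely what you do. Your explicit bookkeeping of the $\ell=1$ case is, if anything, more careful than the paper's own treatment.
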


Next we turn our attention to topological properties of disks 

\begin{lemma}\label{emb}
Let $X$ be an acceptable presentation  $2$-complex, and $m$ a given integer. Then asymptotically almost surely the random branched covers of $X$ have all disks of index $m$ that are injectively embedded.
\end{lemma}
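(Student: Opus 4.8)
The plan is to translate the event ``some disk of index $m$ fails to be injectively embedded'' into a statement about the random permutations $\sigma=(\sigma_1,\dots,\sigma_t)$, and then show this event has probability $o(1)$ by a first-moment (union bound) argument. First I would fix a relator $r_i$ and recall that a lift $D_{ij}$ of index $m$ corresponds to a length-$m$ cycle of the word-map permutation $r_i(\sigma)$; its attaching loop in $X(\sigma)^{(1)}$ traverses $m$ consecutive copies of $r_i$, i.e. a word of length $m|r_i| \le m R_L$ in the lifted edges. Such a lift fails to be injectively embedded precisely when this attaching loop is not an embedded cycle in the $1$-skeleton $X(\sigma)^{(1)}$, equivalently when the $m|r_i|$-step walk (on vertices $v_1,\dots,v_n$, with steps dictated by the letters of $r_i^m$ and the permutations $\sigma_k$) revisits a vertex before closing up, OR when two distinct such loops of index $m$ share an edge or vertex (here one must be slightly careful about whether ``injectively embedded'' is being asked of the closed $2$-cell into $X(\sigma)$, or just of its boundary — I will read it as the stronger statement and handle both). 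In either case the failure forces a ``coincidence'': two distinct positions along words of total length $O(R_L\cdot m \cdot s)$ map to the same vertex under the appropriate product of $\sigma_k^{\pm 1}$'s.

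The key step is the probability estimate for a single such coincidence. For a fixed pair of positions, the event that a prescribed (reduced, nontrivial) product of the $\sigma_k^{\pm 1}$ sends a prescribed vertex $v_a$ to a prescribed vertex $v_b$ has probability $O(1/n)$: this is the standard fact that applying a nontrivial reduced word in uniformly random permutations behaves, on any fixed input, like a uniformly random point for the first step that hasn't yet been pinned down, so the probability of hitting a specific target is $\Theta(1/n)$ up to lower-order corrections (this is exactly the flavor of the Dixon/word-map estimates already invoked as Proposition \ref{prop.conn} and Proposition \ref{expnumber}). One has to ensure the relevant ``partial word'' is genuinely nontrivial in the free group, which is where acceptability enters: since no $r_i$ is a proper power and the $r_i$ are pairwise non-conjugate (Lemma \ref{lem-bound-on-overlap-length} and its corollary), a self-intersection or mutual intersection of index-$m$ lifts of bounded length forces a sub-overlap, hence is constrained, and in particular the subword recording the coincidence is reduced and nonempty in $\pi_1(X^{(1)})$.

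Then I would run the union bound: there are $n$ choices of basepoint vertex, a bounded number ($\le s$) of relators, and — crucially — for index exactly $m$ the relevant words have length bounded by the constant $m R_L$, so the number of pairs of positions at which a coincidence could first occur is $O(m^2 R_L^2 s^2)$, a constant independent of $n$. Hence the expected number of index-$m$ disks that fail to embed, plus the expected number of intersecting pairs of index-$m$ disks, is at most $O(n) \cdot O(1) \cdot O(1/n) = O(1/n) \to 0$. Wait — I should be careful: a naive count gives $O(n)$ candidate disks each failing with probability $O(1/n)$, which is only $O(1)$, not $o(1)$. The fix is that a disk of index exactly $m$ is already a rare event by Corollary \ref{cor-number-of-small-disks} (there are only $O(1)$ of them a.a.s.), so one conditions on that: a.a.s. there are at most $M=M(m,\varepsilon)$ disks of index $\le m$, and then among those boundedly many disks, each fails to be embedded with probability $O(1/n)$ by the coincidence estimate, so a.a.s. none of them does. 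Combining, a.a.s. every index-$m$ disk is injectively embedded.

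The main obstacle I anticipate is making the ``coincidence has probability $O(1/n)$'' step fully rigorous in the presence of conditioning: after conditioning on the cycle structure of $r_i(\sigma)$ (to guarantee index-$m$ lifts exist), the permutations are no longer uniform, so one cannot directly quote the unconditioned word-map estimates. The cleanest route is probably to avoid conditioning on cycle structure altogether: instead, directly bound $\mathbb{P}(\exists$ index-$m$ disk that is non-embedded$)$ by summing, over all potential non-embedded configurations (basepoint, relator, pair of coincident positions), the unconditioned probability of the corresponding word-equation in $\sigma$, using the bounded count of configurations and the $O(1/n)$ per-configuration bound, and show this sum is $o(1)$ outright — which works as soon as the per-configuration bound is genuinely $o(1/\#\text{configurations}\cdot\text{(basepoint count)})$, i.e. $o(1/n)$ with room to spare, or by noting the count of configurations that produce a *specific* pre-existing index-$m$ disk is only $O(1)$ and there are a.a.s. only $O(1)$ such disks. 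I expect the write-up to hinge on carefully organizing this union bound so that the polynomial-in-$n$ factors cancel.
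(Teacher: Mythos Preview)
Your first-moment instinct is right, and you correctly locate the obstacle: a naive union bound over the $n$ basepoints, with an $O(1/n)$ coincidence estimate, gives only $O(1)$. But neither of your two proposed repairs is actually carried through. The conditioning route (bound the number of index-$m$ disks by Corollary~\ref{cor-number-of-small-disks}, then argue each one fails to embed with probability $O(1/n)$) runs into exactly the problem you flag: once you condition on the cycle structure of $r_i(\sigma)$, the $\sigma_k$ are no longer uniform and the $O(1/n)$ coincidence estimate is no longer available for free. The direct route needs a per-configuration bound of $O(1/n^2)$, not $O(1/n)$, and you never say where the second factor of $1/n$ comes from. (It comes from imposing \emph{both} constraints at once: ``the walk closes up after exactly $m$ copies of $r_i$'' is one coincidence, and ``two interior positions collide'' is a second, and one must argue these are independent enough to give $O(1/n^2)$ per (basepoint, relator, position-pair).)

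The paper sidesteps this entirely by counting the complement. Instead of bounding the number of \emph{non}-embedded index-$m$ lifts via coincidence events, it counts the \emph{embedded} ones exactly: an embedded index-$m$ lift of $r$ is specified by an ordered choice of $m|r|$ distinct vertices (that is $n(n-1)\cdots(n-m|r|+1)$ choices, divided by $m$ for the cyclic redundancy), and then the remaining edges of each generator $u_i$ can be completed in $(n-\ell_i)!$ ways, where $\ell_i$ is the number of $u_i$-edges already used. On the other hand, Proposition~\ref{expnumber} gives the total expected number of index-$m$ lifts as $\tfrac{1}{m}+O(n^{-\pi(r)})$. Subtracting and dividing by $(n!)^t$ yields an explicit upper bound on $\mathbb{P}(\exists\text{ non-embedded index-}m\text{ lift of }r)$ which is a difference of two ratios of polynomials; both numerator and denominator have the same degree $m|r|$ in $n$ and the same leading coefficient, so the difference tends to $0$. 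No conditioning, no delicate independence argument. Finally, note that ``injectively embedded'' here means only that $\partial D_{ij}$ is an embedded cycle in $X(\sigma)^{(1)}$; pairwise disjointness of small disks is the separate Lemma~\ref{dd>1}, so you need not fold that into this argument.
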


\begin{proof}
We will count all possible random branched coverings and see how many of them contain non-injective lifts of index $m$. Since the number of generators is $t$ and $|\Sym(n)|=n!$, the number of choices for permutations $\sigma=(\sigma_1,\dots,\sigma_t)$ is $(n!)^{t}$. In other words, there are $(n!)^{t}$ random branched coverings of $X$. 

Let $r$ be a relator. By the Proposition~\ref{expnumber}, the expected number of lifts of $r$ of index $m$ in a random branched covering is $\frac{1}{m}+\mathbf{O}(n^{-\pi(r)})$. Since there are $(n!)^{t}$ random branched coverings, the total number of lifts of $r$ of index $m$ in all possible random branched coverings is $(n!)^{t}\left(\frac{1}{m}+\mathbf{O}(n^{-\pi(r)})\right)$.

Now, in all possible branched coverings, we count the total number of injective lifts of the relator $r$ of index $m$. Let $r=w_1\dots w_{|r|}$ where each \[w_i\in\{u_1,\dots,u_t,u_1^{-1},\dots,u_t^{-1}\},\] the symmetric generating set. For such a lift $D\subset X(\sigma)$ of $r$, since it has index $m$, the length of $\partial D$ is $m|r|$. 

To be injective, $\partial D$ has to contain $m|r|$ distinct vertices. Assume that $n$ is sufficiently large so that $n \geq m|r|$. Choosing the $m|r|$ distinct vertices in order amounts to $n(n-1)\dots(n-m|r|+1)$ possibilities. Once we choose the labels on the vertices of $\partial D$, the labels on the oriented edges along $\partial D$ will be determined by $r=w_1\dots w_{|r|}$ and our labeling convention (see Remark \ref{rmk:label-convention}). Thus there are $n(n-1)\dots(n-m|r|+1)$ different ways of labeling the boundary of an injective lift $D$. Again, we remark that such a labeled lift may occur in different branched covers. Note that since cyclic permutations of the labeling on disk's boundary represent the same lift, there are $\frac{1}{m}n(n-1)\dots(n-m|r|+1)$ different ways of labeling the boundary up to cyclic relabeling.

Now we count all possible branched coverings $X(\sigma)$ that contain the choice of $\partial D$ with a fixed given injective labeling. Let $\ell_i$ be the number of occurrences of a lift of the generator $u_i$ along $\partial D$. Obviously, $\sum_{i=1}^t\ell_i=\ell(\partial D)=m|r|$. For the remaining $n-l_i$ lifts of $u_{i}$ that are not contained in the labeled $\partial D$, there are $n-\ell_i$ possible initial vertices for the lifts, and $n-\ell_i$ possible ending vertices for the lifts. Hence the number of ways to place the remaining lifts of $u_i$ boils down to choosing a pairing between the possible initial vertices and terminal vertices. There are $(n-\ell_i)!$ such pairings. Ranging over all the edges, we obtain $(n-\ell_1)!\dots(n-\ell_t)!$ labeled branched coverings that contain the lift $D$ with the prescribed (injective) labeling on $\partial D$.

Thus in all possible branched coverings, the total number of injective lifts of $r$ of index $m$ will be
\begin{align*}
    \frac{\big[n(n-1)\dots(n-m|r|+1)\big]\cdot \big[(n-\ell_1)!\dots(n-\ell_t)!\big]}{m}
\end{align*}
and therefore the total number of non-injective lifts of $r$ in all possible branched coverings is

\begin{align*}
    \frac{(n!)^{t}}{m}-\frac{\big[n(n-1)\dots(n-m|r|+1)\big]\cdot \big[(n-\ell_1)!\dots(n-\ell_t)!\big]}{m}+(n!)^t\mathbf{O}(n^{-\pi(r)}).
\end{align*}

The total number of branched coverings that contain a non-injective lifts will be bounded above by the total number of non-injective lifts in all branched coverings. Let us denote by $P_n(r)$ by the probability that a branched covering contains a non-injective lift of $r$ of index $m$. Then using the above estimate we obtain the upper bound
\begin{align*}
    &P_n(r)\leq\frac{(n!)^{t}-n(n-1)\dots(n-m|r|+1)(n-\ell_1)!\dots(n-\ell_t)!}{m(n!)^t}+\mathbf{O}(n^{-\pi(r)}).
\end{align*}
With this upper bound in hand, we can easily compute the limit of $P_n(r)$ as $n\rightarrow\infty$: 
\begin{align*}
    \lim_{n \to \infty}P_n(r) &\leq \lim_{n \to \infty}\frac{(n!)^{t}-n(n-1)\dots(n-m|r|+1)(n-\ell_1)!\dots(n-\ell_t)!}{m(n!)^t}\\
    & =\frac{1}{m}- \frac{1}{m}\cdot\lim_{n \to \infty}\frac{n(n-1)\dots(n-m|r|+1)}{\left(\displaystyle\prod_{i=0}^{\ell_1-1}(n-i)\right)\dots\left(\displaystyle\prod_{i=0}^{\ell_t-1}(n-i)\right)}.\\
\end{align*}
Note that in the last term, the numerator and the denominator are both  $m|r|$-degree monic polynomials in $n$. As $n\rightarrow \infty$, the ratio tends to one, and as a result $\lim_{n \to \infty}P_n(r) = 0$. This tells us that in a random branched covering, all lifts of the single relator $r$ of index $m$ are embedded asymptotic almost surely. 

The probability that a branched covering contains a non-injective index $m$ lift of one of the finitely many relators $r_1,\dots,r_s$ is less than $\sum_{i=1}^s P(r_i)$. Since the sum goes to zero as $n\rightarrow\infty$, we conclude that every lift of an $r_i$ index $m$ are embedded in a random branched covering asymptotic almost surely.
\end{proof}



A similar argument can be used to control intersections of disks.

\begin{lemma}\label{dd>1}
    Let $X$ be an acceptable presentation $2$-complex, and $I$ a given integer. Then asymptotically almost surely in the random branched covers of $X$ all disks of index at most $I$ are pairwise disjoint. 
\end{lemma}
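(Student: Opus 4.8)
The plan is to extend the first–moment count of Lemma~\ref{emb} from single disks to intersecting pairs. Since there are only finitely many relators and only the indices $1,\dots,I$ are relevant, a union bound reduces the statement to the following: for each fixed quadruple $(r,m,r',m')$ with $m,m'\le I$, the probability $p_n$ that $X(\sigma)$ contains distinct disks $D\neq D'$ --- with $D$ a lift of $r$ of index $m$ and $D'$ a lift of $r'$ of index $m'$ --- whose attaching loops share a vertex, tends to $0$. Such an ordered pair is encoded by the labeled subgraph $H=\partial D\cup\partial D'\subset X(\sigma)^{(1)}$ together with the two immersed loops sitting inside it; the isomorphism type (its ``combinatorial shape'' $S$) takes only finitely many values, since $H$ has at most $m|r|+m'|r'|\le 2IR_L$ vertices, bounded valence, and edges labeled by the fixed generating set.

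For a shape $S$ with $V_S$ vertices and $E_S$ edges, of which $e_i^S$ carry the label of a lift of $u_i$ (so $\sum_{i=1}^t e_i^S=E_S$), a labeled realization of $S$ in $X(\sigma)^{(1)}$ amounts to an injection of its vertex set into $\{1,\dots,n\}$ --- at most $n^{V_S}$ of these, the edge labels then being forced --- and a covering $\sigma$ contains such a realization only if each $\sigma_i$ restricts to the prescribed $e_i^S$ edges, leaving at most $(n-e_i^S)!$ completions. Summing over shapes, the number of covers admitting a bad pair of type $(r,m,r',m')$ is at most $\sum_S n^{V_S}\prod_{i=1}^t(n-e_i^S)!$, whence
\[
p_n\;\le\;\sum_S\frac{n^{V_S}\prod_{i=1}^t(n-e_i^S)!}{(n!)^t}
\;=\;\sum_S\frac{n^{V_S}}{\prod_{i=1}^t n(n-1)\cdots(n-e_i^S+1)} .
\]
The $S$-th summand is a ratio of a degree-$V_S$ polynomial to a monic degree-$E_S$ polynomial, hence $O(n^{V_S-E_S})$; since the sum is finite, $p_n\to 0$ provided $V_S<E_S$ for every shape $S$.

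So the whole argument rests on the graph-theoretic inequality $V_S<E_S$. As $X$ is acceptable each relator is cyclically reduced, so each attaching loop is locally injective; hence $\partial D$ and $\partial D'$, viewed as subgraphs, are connected with every vertex of valence $\ge 2$, and so is their union $H$ (the two loops meet). A connected graph with minimum valence $\ge 2$ satisfies $E\ge V$, with equality precisely when it is a single cycle; and if $H$ were a single cycle then $\partial D=\partial D'=H$, so $D$ and $D'$ would be attached along the very same loop. Acceptability rules this out for $D\neq D'$: no relator is a proper power --- so $(r)^m$ has minimal period $|r|$ and all lifts wrapping around a given cycle coincide as cells --- and distinct relators are not freely homotopic even after reversal, so two genuinely distinct $2$-cells cannot share their entire boundary loop. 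Hence $V_S<E_S$ whenever $D\neq D'$, giving $p_n=O(1/n)$; summing over the finitely many quadruples $(r,m,r',m')$ completes the proof. I expect the only real work here to be the bookkeeping: organizing the shapes $S$ uniformly in $n$ and, above all, making sure the boundary case $V_S=E_S$ is genuinely vacuous under the acceptability hypotheses. (One may instead first invoke Lemma~\ref{emb} for each $m\le I$, assume all index-$\le I$ disks embedded, and note that then $\partial D\cap\partial D'$ is a disjoint union of at least one arc, so $V_S-E_S$ equals minus the number of those arcs; this streamlines the combinatorics at the cost of still having to exclude $\partial D=\partial D'$.)
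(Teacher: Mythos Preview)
Your proposal is correct and follows the same first-moment strategy as the paper: enumerate the finitely many combinatorial shapes that $\partial D\cup\partial D'$ can take, and show each occurs with probability $O(n^{V-E})=O(1/n)$. The paper's version first invokes Lemma~\ref{emb} so that $\partial D,\partial D'$ are embedded cycles, then parametrizes the shape $Z$ by the number $\beta\ge 1$ of shared overlap arcs and reads off $E_Z-V_Z=\beta$ directly from the gluing; your main route skips that reduction and obtains $E_S>V_S$ from the minimum-valence-two argument on the union graph, while the alternative you sketch at the end is precisely the paper's path. Your treatment of the boundary case $E_S=V_S$ (a single cycle $H$) is terse but sound: the not-a-proper-power hypothesis forces any lift of $r$ with image $H$ to wrap exactly once and to start in a single $r(\sigma)$-orbit, and the opposite-orientation sub-case is vacuous because no nontrivial cyclically reduced word in a free group is conjugate to its own inverse --- so two distinct $2$-cells can never share a single cycle as their common boundary image.
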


\begin{proof}
    We want to compute the probability that two lifts of index at most $I$ intersect in some branched cover and show that it goes to $0$ as $n\rightarrow\infty$. Among the $(n!)^t$ possible labeled branched coverings, this amounts to counting the ones that contain a pair $(D, D')$ of lifts of index at most $M$ that intersect non-trivially, and showing that the proportion of such coverings tends to 0 as $n$ tends to infinity. 
    
    The idea of the proof is very similar to the proof of Lemma \ref{emb}. We will provide a very crude upper bound on the number of such covers, proceeding in two steps. The first step is to identify and count the possible images of the boundaries of the intersecting disks inside all possible branched covers. The second step is to look at each such possible image, and estimate the number of covers that contain that specific pattern of intersection.
    
    We now provide the details for the first step. Consider a pair $r, r'$ of relators, and assume that we have disks $D, D'$ that are lifts of the relators $r, r'$ of respective fixed index $m, m' \leq I$. Without loss of generality, assume that $m\leq m'$. We want to identify the possible intersecting images of $\partial D$ and $\partial D'$ inside a branched cover. Note that by Lemma \ref{emb}, any such lift of a disk with index at most $M$ is injective asymptotically almost surely. So we may assume that the images are injective on both $\partial D$ and $\partial D'$.

    By hypothesis, we have that the two images have non-empty intersection. Any such intersection gives rise to an overlap. Suppose that the number of overlaps between $r$ and $r'$ is $\alpha$. By Lemma \ref{lem-lifting-overlaps}, every overlap between $\partial D$ and $\partial D'$ must be lifts of one of the overlaps between $r, r'$. Since $\partial D$ is an $m$-fold cover of $r$, there are at most $m\alpha$ possible paths in $\partial D$ that can be part of an overlap. So in the lift $\partial D\cap\partial D'$ cannot contain more than $m\alpha$ overlaps. Fix $1\leq \beta \leq m\alpha$ and consider the case that $\partial D$ and $\partial D'$ contain exactly $\beta$ overlaps. By abuse of notation, we let $\{(\bf{p_1}, \bf{q_1}), \ldots,(\bf{p_\beta}, \bf{q_\beta})\}$ be the collection of images of the overlaps inside $\partial D$ and $\partial D'$ (recalling that each $\bf{p_i}$ actually lives in $\widetilde{\partial D}$ and similarly for $\bf{q_i}$). Note that, since the map on $\partial D'$ is injective, the subpaths $\{\bf{p_1}, \ldots , \bf{p_\beta}\}\subset \partial D$ must be pairwise disjoint. Similarly, injectivity of the map on $\partial D$ implies the subpaths $\{\bf{q_1}, \ldots , \bf{q_\beta}\}\subset \partial D'$ must be pairwise disjoint. Finally, $Z$ is obtained as a quotient space of $\partial D \coprod \partial D'$ by identifying each of the subpaths $\bf{p_i} \subset \partial D$ with the corresponding $\bf{q_i}\subset \partial D'$, see Figure \ref{fig-construct-Z}.

    \begin{figure}[h!]
    \includegraphics[width=8cm,height=5.33cm]{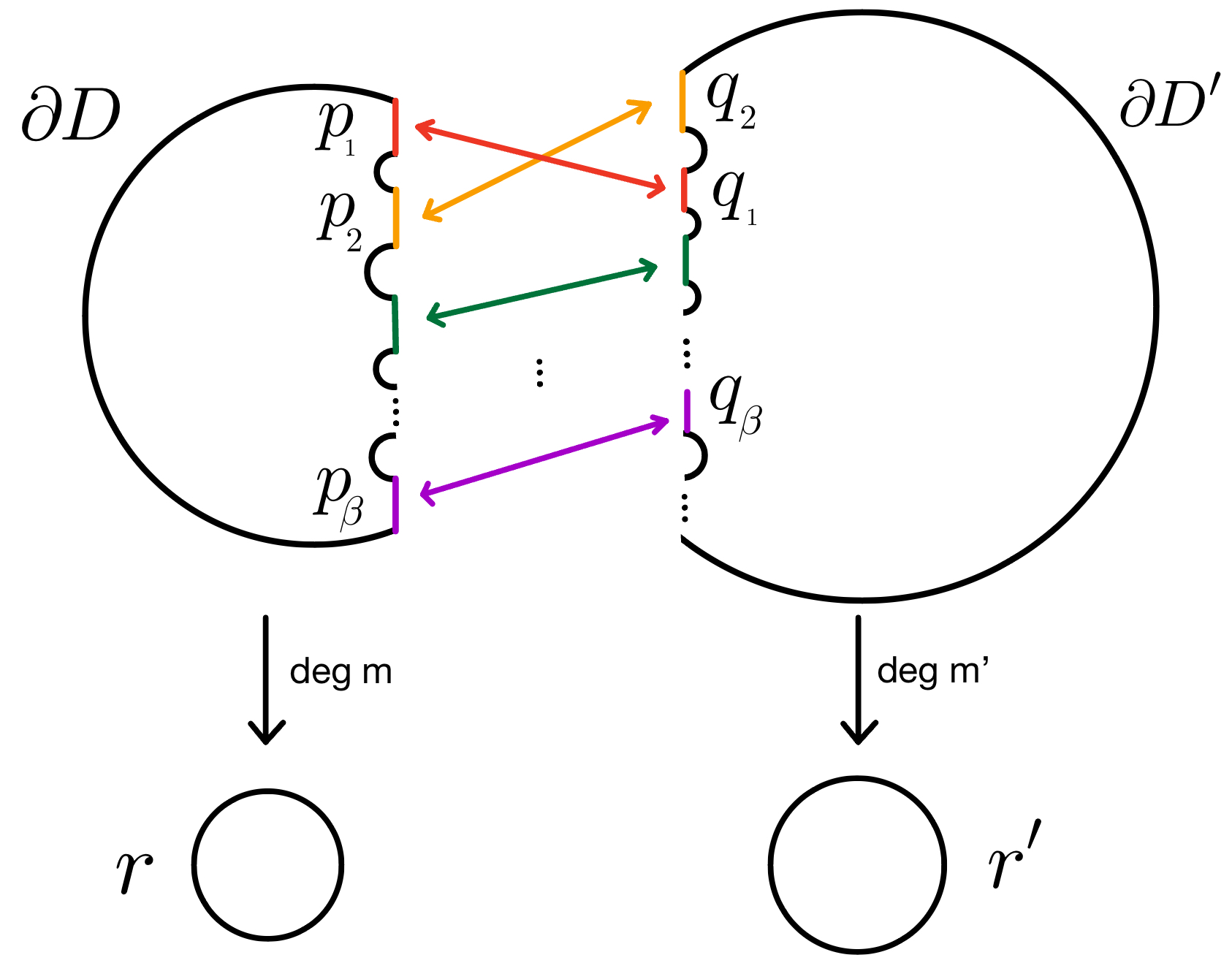}
    \caption{Construction of space $Z$}\label{fig-construct-Z}
    \end{figure}

    Before proceeding to the second step, let us count the number of such $Z$ we can construct. Since we are only interested in obtaining a crude upper bound, the number of such $Z$ is certainly no larger than $$\binom{m\alpha}{\beta}\binom{m'\alpha}{\beta}\cdot \beta!$$
    This estimate comes from the choices for the overlap paths $p_1, \dots ,p_\beta$ for $\partial D$, the choices for the overlap paths $q_1, \dots ,q_\beta$ for $\partial D'$, and the choices of pairings between the $p_i$ and $q_i$. This bounds the number of $Z$ produced, where $\partial D, \partial D'$ are degree $m$, $m'$ covers of $r, r'$, with exactly $\beta$ overlaps.

    If we want to account for {\bf all} possible overlapping disks, we need to also take into account the choices of relators $r, r'$, the choice of degree of covers $m, m'$, and the choice of the number of overlaps $\beta$. let $A$ denote the uniform upper bound on the number of overlaps occurring along any of the relations (see Corollary \ref{cor-bound-on-number-of-overlaps}); this quantity only depends on $X$. Then we have the inequalities $m\alpha \leq IA$, $m'\alpha \leq IA$, and $\beta \leq IA$. Thus the upper bound obtained in the previous paragraph satisfies the following uniform upper bound:
     $$\binom{m\alpha}{\beta}\binom{m'\alpha}{\beta}\cdot \beta! \leq \binom{IA}{\frac{1}{2}IA}^2\cdot (IA)!$$
    Assume our presentation $2$-complex $X$ has exactly $s$ $2$-cells (i.e. the group has $s$ relations). Then there are at most $s^2$ choices for the relations $r, r'$, at most $I^2$ choices for the degrees $m, m'$, and at most $IA$ choices for the number $\beta$ of overlaps. Along with the uniform upper bound given above, we see that the total possible number of spaces $Z$ we can construct is bounded above by:
    \begin{align}\label{eqn-number-of-terms}
        s^2I^3A\binom{IA}{\frac{1}{2}IA}^2 \cdot (IA)!
    \end{align}
    a number that only depends on the acceptable presentation $2$-complex $X$ and the upper bound $I$ on the index of disks. This completes the first step of the argument. 

    \vskip 5pt

    For the second step, we fix one of the $Z$ constructed above, and denote by $P_n(Z)$ the probability that one of the degree $n$ covers of $X^{(1)}$ contains an embedded copy of $Z$. We then have that the probability that some degree $n$ branched cover contains a pair of intersecting disks of index at most $I$ is bounded above by
    $\sum_Z P_n(Z)$. Note that the number of terms in this sum is uniformly bounded above independent of $n$, see Equation (\ref{eqn-number-of-terms}). Thus to complete the proof it suffices to show that, for each fixed $Z$, $\lim_{n\rightarrow \infty} P_n(Z) =0$. 

    \vskip 5pt
    
    To estimate $P_n(Z)$, we count the number of degree $n$ covers of $X^{(1)}$ that contain an injective copy of $Z$. To construct such a cover, we first define the injective map on $Z$. This amounts to assigning a distinct label to each vertex of $Z$. Recall that $Z$ is built from a combinatorial cycle of length $m|r|$ and a combinatorial cycle of length $m'|r'|$, by identifying together $\beta$ pairwise disjoint paths $p_i\subset \partial D$ with $\beta$ pairwise disjoint paths $q_i \subset \partial D'$. Each path of length $|p_i|=|q_i|$ has exactly $|p_i|+1$ vertices, so the total number of vertices in the graph $Z$ is exactly
    $$m|r| + m'|r'| - \big(\beta +\sum |p_i|\big)$$
    The number of ways to label the vertices of $Z$ with distinct labels is thus exactly:
    \begin{align}\label{egn-F-degree}
        F_Z(n):=n \cdot(n-1) \cdot (n-2) \cdot \ldots \cdot \Big(n+1-\big(m|r| + m'|r'| - \beta - \sum |p_i|\big)\Big)
    \end{align}
    Observe that $F_Z$ is a polynomial in $n$.

    Once we have identified the image of $Z$, to construct a labeled branched cover containing $Z$ we have to consider the remaining edges. Let $a_i$ be number of lifts of the generator $u_i$ that appear in the label of $Z$. Obviously, every edge in $Z$ is labeled by the lift of one of the generators, so we have 
    \begin{align}\label{eqn-G-degree}
        \sum_{i=1}^{t}a_i =m|r|+m'|r'|-(\sum_{j=1}^\beta|p_i|).
    \end{align}
    Then there are $(n-a_i)$ lifts of the $u_i$-edge that still remain to be determined. There are $(n-a_i)$ choices for the initial vertex of each lifted edge, and $(n-a_i)$ choices for the terminal vertex. Any pairing of the initial vertices with terminal vertices will give a valid covering of the $u_i$-edge. Thus there are $(n-a_i)!$ possibilities for the full lift of the $u_i$-edge. As we range over all the generators, we see that the number of ways to complete the labeling of $Z$ to a covering of $X^{(1)}$ is 
    $$\prod_{i}(n-a_i)!$$
    This means that the probability $P_n(Z)$ we wanted to compute is given by:
    $$P_n(Z)  =  \frac{F_Z(n) \cdot\prod_{i=1}^t(n-a_i)!}{(n!)^t}.$$
    Defining the polynomial $G_Z(n)$ via
    $$G_Z(n):=\frac{(n!)^t}{\prod_{i=1}^t(n-a_i)!}$$
    we see that $P_n(Z) = F_Z(n)/G_Z(n)$ is a rational function of $n$. The asymptotics of $P_n(Z)$ as $n\rightarrow\infty$ is then completely determined by comparing the degrees of $F_Z(n)$ and $G_Z(n)$. From the definition of $F_Z(n)$ in Equation (\ref{egn-F-degree}), we immediately obtain
    $$\deg(F_Z) = m|r|+m'|r'|-\beta - \sum|p_i|.$$
    On the other hand, the definition of $G_Z$ and Equation (\ref{eqn-G-degree}) tells us that
    \begin{align*}
        \deg(G_Z)=\sum_{i=1}^t a_i =m|r|+m'|r'|-\sum|p_i|
    \end{align*}
    Since $\beta\geq 1$, we see that $\deg(G_Z(n))>\deg(F_Z(n))$. It follows that $\lim_{n\rightarrow \infty} P_n(Z) =0$, completing the proof.

\end{proof}

\begin{cor}\label{d-cor}
    The property that all worrisome-disks are pairwise disjoint in $X(\sigma)$ holds asymptotically almost surely.
\end{cor}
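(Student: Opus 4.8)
The plan is to obtain this as an immediate consequence of Lemma~\ref{dd>1}. Recall that the critical index $I = \mathcal{O}/(\lambda R_S)$, with $\mathcal{O} = R_L^2 + R_L$, is a fixed real number determined entirely by the acceptable presentation $2$-complex $X$ and the chosen constant $\lambda$; crucially, it does not depend on $n$. By Definition~\ref{disks}, a disk $D$ in $X(\sigma)$ is $\lambda$-worrisome precisely when $\ind(D) < I$. Since the index of a disk is a positive integer, this means every $\lambda$-worrisome disk has index at most the fixed integer $I' := \lceil I \rceil - 1$ (or any integer $\geq I - 1$ that one prefers to name).

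First I would invoke Lemma~\ref{dd>1} with the ``given integer'' in its hypothesis taken to be $I'$. This yields that asymptotically almost surely all disks of index at most $I'$ in $X(\sigma)$ are pairwise disjoint. Since the collection of $\lambda$-worrisome disks is contained in the collection of disks of index at most $I'$, it follows that asymptotically almost surely all $\lambda$-worrisome disks in $X(\sigma)$ are pairwise disjoint, which is exactly the assertion of the corollary.

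There is no genuine obstacle here beyond the bookkeeping already done: the only subtlety is that Lemma~\ref{dd>1} is a statement about a \emph{fixed} index bound as $n \to \infty$, so one must observe that $I$ (hence $I'$) is a constant independent of $n$ — which holds because $R_L$, $R_S$ and $\lambda$ are all fixed once $X$ and $\lambda$ are chosen. Optionally, one may also record, using Corollary~\ref{cor-number-of-small-disks} with $k = I'$, that asymptotically almost surely there are only boundedly many such disks in the first place, but this is not needed for the disjointness conclusion.
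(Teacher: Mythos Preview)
Your proposal is correct and matches the paper's intended argument: the corollary is stated immediately after Lemma~\ref{dd>1} with no proof, indicating it is meant to follow directly by applying that lemma with the fixed index bound $I$ (which depends only on $X$ and $\lambda$, not on $n$). Your observation that the worrisome disks are exactly those of index $< I$, hence of index at most a fixed integer, is the only bookkeeping needed.
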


\bigskip

\section{Proof of the Main Theorem}\label{Section:Proof}

Let $\Gamma=\langle u_1,\dots,u_t \mid r_1,\dots,r_s\rangle$ be a finitely presented group and $X$ the presentation $2$-complex where $t\geq 2$. Let $f:X(\sigma)\rightarrow X$ be a $n$-fold random branched covering obtained from random permutations $\sigma=(\sigma_1,\dots,\sigma_t)$ where $\sigma_i\in \Sym(n),i=1\dots t$. 

We are now ready to prove the {\bf Main Theorem} in the special case of the presentation $2$-complex. For the convenience of the reader, we restate the special case. 

\begin{thm}
    Let $X$ be the presentation $2$-complex associated to the finite presentation $\Gamma=\langle u_1,\dots,u_t \mid r_1,\dots,r_s\rangle$. We assume that the relators are cyclically reduced, that none of the relators are conjugate to proper powers, and that no relations are conjugate to each other. Let $X(\sigma)$ be an $n$-fold random branched cover of $X$. Then for any given constant $\lambda>0$, $X(\sigma)$ is asymptotically almost surely homotopy equivalent to a $2$-complex satisfying geometric $C'(\lambda)$-small cancellation.
\end{thm}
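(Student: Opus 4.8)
Throughout I may assume that $\lambda$ is as small as I wish, since $C'(\lambda')$ implies $C'(\lambda)$ whenever $\lambda'\le\lambda$; in particular I am free to absorb multiplicative constants. I keep the notation $I$, $\mathcal O$, $R_S$, $R_L$ from the previous subsection and set $P:=\mathcal O/\lambda$. The first thing I would do is record the structural facts that hold asymptotically almost surely, noting that a conjunction of finitely many a.a.s.\ events is again a.a.s.: $X(\sigma)$ is connected (Corollary~\ref{cor.conn}); every good disk satisfies $C'(\lambda)$ (this is automatic; see the discussion around Definition~\ref{disks} and Corollary~\ref{rmk-piecelength}); there is a uniform bound $M$ on the number of worrisome disks (Corollary~\ref{cor-number-of-small-disks} with $k=\lceil I\rceil$); every worrisome disk is injectively embedded (Lemma~\ref{emb}, applied to each of the finitely many indices $m<I$); and the worrisome disks are pairwise disjoint (Corollary~\ref{d-cor}). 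Fixing a branched cover in this a.a.s.\ event, its worrisome disks $D^{(1)},\dots,D^{(q)}$ ($q\le M$) have attaching loops $w^{(1)},\dots,w^{(q)}$ which are \emph{embedded} combinatorial circles in $X(\sigma)^{(1)}$ of length $<\lceil I\rceil R_L$, and since the $D^{(i)}$ are disjoint the loops $w^{(i)}$ share no edges.

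The plan is then to delete each worrisome disk and glue in its place a small contractible complex engineered to satisfy $C'(\lambda)$, one disk at a time. The topological input is: if $Z=Z_0\cup_w D^2$ is obtained by attaching a $2$-cell along an embedded circle $w\subset Z_0$, and $A$ is any contractible polygonal $2$-complex containing the cycle graph $w$ as a subcomplex, then $Z_0\cup_w A\simeq Z$. Indeed $w\hookrightarrow A$ is a cofibration, so $Z_0\cup_w A$ is the homotopy pushout of $Z_0\hookleftarrow w\hookrightarrow A$; since $A\simeq\ast$ this agrees with the homotopy pushout of $Z_0\hookleftarrow w\to\ast$, namely $Z_0\cup_w\operatorname{Cone}(w)=Z_0\cup_w D^2=Z$. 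Applying this in turn to $D^{(1)},\dots,D^{(q)}$ — each new patch meets the complex built so far only along $w^{(i)}$, hence is disjoint from the other $w^{(j)}$, so the modifications do not interfere — produces a complex $X'\simeq X(\sigma)$ whose $2$-cells are the good disks of $X(\sigma)$ together with the patch cells.

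It remains to choose the patches so that $X'$ is $C'(\lambda)$, and here is where I expect the real work. I would give all interior edges of every patch fresh, pairwise distinct labels; since each $w^{(i)}$ is embedded its edges also carry distinct labels, so ``common subpaths'' of cells of $X'$ are genuine shared edge-paths. A common subpath of a patch cell with a good disk $R$ must then run along edges of some $w^{(i)}$, so it is a common subpath of the worrisome relator $w^{(i)}$ with $R$ inside $X(\sigma)$, of length $<\mathcal O$; as $R$ has perimeter $\ge IR_S\ge P$, this keeps $R$ in $C'(\lambda)$, and it keeps the patch cell in $C'(\lambda)$ as long as I build each patch with all cell perimeters $>P$. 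Thus everything reduces to the following combinatorial problem: given an embedded loop $w$ of length $L<\lceil I\rceil R_L$, construct a contractible polygonal $2$-complex $A\supseteq w$ in which every $2$-cell has perimeter $>P$ and any two $2$-cells — and any cell with itself — meet in a path of length $<\lambda P=\mathcal O$; in other words, a ``thick small-cancellation disk'' with prescribed boundary.

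My approach to that problem would be to take a large piece of the $\{k,3\}$ tiling with $k>P$ (whose cells have perimeter $k>P$ and meet pairwise in at most one edge) and reconcile its boundary with the short prescribed word $w$ by interposing one or more ``funnel'' layers of large polygonal cells that expand the boundary length out from $L$ to a length the tiled piece can realize. The main obstacle is carrying out this bookkeeping so that contractibility, the large-perimeter condition, and the short-overlap condition hold simultaneously while the outer boundary is exactly $w$ — together with disposing of a few degenerate small-$L$ cases (length $\le 2$), which are either already $C'(\lambda)$ or patched directly. Once the patches are in hand, $X'$ is a polygonal $2$-complex satisfying $C'(\lambda)$ and homotopy equivalent to $X(\sigma)$, which is what we want.
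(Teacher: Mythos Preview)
Your plan diverges from the paper at the crucial step. The paper does \emph{not} attempt to replace each worrisome disk by an engineered $C'(\lambda)$ patch; instead it simply \emph{collapses} every small disk to a point, obtaining a quotient complex $Y(\sigma)$. This avoids introducing any new $2$-cells. The price is that collapsing the small-disk boundaries can concatenate overlaps of the surviving disks, and the bulk of the paper's argument (its ``Facts 3 and 4'') is devoted to bounding how many such concatenations can occur; this is why the paper needs the second threshold $K$ and the trichotomy small/medium/large, which your setup lacks.

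Your patching approach, by contrast, runs into a genuine obstruction that is not mere bookkeeping. If $X'$ were $C'(\lambda)$ with $\lambda<1/6$, then the embedded loop $w^{(i)}$ is null-homotopic in $X'$ and bounds a reduced van Kampen diagram; Greendlinger's lemma then produces a $2$-cell $c$ with a connected arc of length $>(1-3\lambda)\,\ell(\partial c)$ along $w^{(i)}$. One checks (using $\lambda<1/4$, the bound $\mathcal O$ on overlaps with good disks, and the freshness of interior patch labels) that $c$ cannot be a good disk or a cell of another patch, so $c$ is a cell of $A_i$ with a long arc on $w^{(i)}$. But any such cell meets good disks along $w^{(i)}$ in pieces of length up to $\mathcal O$, so $C'(\lambda)$ forces $\ell(\partial c)>\mathcal O/\lambda$, whence the Greendlinger arc has length $>(1-3\lambda)\mathcal O/\lambda$, exceeding $L=\ell(w^{(i)})$ once $\lambda$ is small. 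Your ``funnel layers towards a $\{k,3\}$ tiling'' do nothing to evade this: the funnel cells are exactly the ones Greendlinger finds. In short, a contractible $C'(\lambda)$ patch with the boundary and perimeter constraints you impose cannot exist in general, and what you flag as the ``main obstacle'' is in fact a wall. The collapsing trick in the paper is precisely what sidesteps it: after collapsing, there is no short null-homotopic loop left for Greendlinger to bite on.
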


\begin{proof}
    Observe that the presentation $2$-complex $X$ is an acceptable $2$-complex. Given an $\epsilon >0$, we want to show that for all sufficiently large $n$, a random branched cover $X(\sigma)$ is homotopy equivalent to a $2$-complex satisfying $C'(\lambda)$ with probability greater than $1-\epsilon$. 
    
    We remind the reader of some constants associated to the presentation $2$-complex $X$ and the given $\lambda >0$:
    \begin{itemize}
        \item $R_L$ and $R_S$ denote the length of the longest (resp. shortest) relator in the presentation $\Gamma$;
        \item the parameter $\mathcal{O}:= R_L^2+R_L$ serves as a uniform upper bound on the length of overlaps in $X$ and in any $X(\sigma)$ (see Lemma \ref{lem-bound-on-overlap-length} and Lemma \ref{lem-lifting-overlaps});
        \item the critical index $I:=\frac{\mathcal O}{\lambda R_S}$ has the property that any disk $D$ whose index is is at least $I$ automatically satisfies $C'(\lambda)$.
    \end{itemize}
    Next we introduce some parameters that also depend on the given $\epsilon$. Recall from Corollary \ref{cor-number-of-small-disks} that we can find an integer $M:=M(I, \epsilon)$ with the property that for $n$ sufficiently large, a random branched cover $X(\sigma)$ will contain at most $M$ disks of index $\leq I$ with probability at least $1-\epsilon/2$. Finally, we set the parameter $K$ to be
    \begin{equation}\label{eqn-K}
        K:=R_S^{-1}(1+\lambda^{-1})\mathcal O\big(M^2I(R_L\mathcal O)^2\big)+R_S^{-1}\lambda^{-1}\mathcal O
    \end{equation}
    Thus the choice of $K$ depends (via $M$) on the choice of $\epsilon$.
    
     From Corollary \ref{cor-number-of-small-disks}, Lemma~\ref{emb}, and Lemma~\ref{dd>1}, we can choose an $N$ sufficiently large, so that for all $n>N$, with probability greater than $1-\epsilon$, a random branched cover $X(\sigma)$ has the following three properties:
    \begin{enumerate}
        \item the number of disks in $X(\sigma)$ of index at most $I$ is at most $M$;
        \item all disks of index at most $K$ are injective;
        \item all disks of index at most $K$ are pairwise disjoint.
    \end{enumerate}
    So to complete the proof, it suffices to show that these branched covers are homotopy equivalent to a complex satisfying $C'(\lambda)$-small cancellation.

    Observe that, if $X(\sigma)$ has {\bf no} disks of index $\leq I$, then all disks in $X(\sigma)$ have overlap ratio less than $\lambda$ (see Remark \ref{good-worrisome-remark}) and $X(\sigma)$ itself satisfies $C'(\lambda)$-small cancellation. But in general, for most $\sigma$ the space $X(\sigma)$ will contain some disks of index $\leq I$. To analyze these branched covers, we partition the disks in $X(\sigma)$ according to their index:
    \begin{itemize}
        \item {\it small disks} are those with index at most $I$,
        \item {\it medium disks} are those with index $>I$ but $\leq K$
        \item {\it large disks} have index greater than $K$.
    \end{itemize}
    By the definition of worrisome disks, the set of small disks are exactly the same as the set of worrisome disks. Since these are the disks which might have overlap ratio greater than $\lambda$, we construct a new space $Y(\sigma)$ by collapsing each of the small disks in $X(\sigma)$ to a point. There is a natural quotient map $q: X(\sigma) \rightarrow Y(\sigma)$. 

    \vskip 5pt
    
    \noindent {\bf Fact 1:} The map $q$ is a homotopy equivalence.

    \vskip 5pt
    
    To check that the quotient map is a homotopy equivalence, recall that quotienting out a contractable subcomplex from a CW-complex yields a homotopy equivalence. From property (2), small disks are embedded, hence have image in $X(\sigma)$ that are homeomorphic to $\mathbb D^2$. From property (3), small disks are pairwise disjoint. Collapsing them one by one yields a finite sequence of homotopy equivalences from $X(\sigma)$ to $Y(\sigma)$.

    Next we need to establish that $Y(\sigma)$ satisfies $C'(\lambda)$-small cancellation. It suffices to check that all the overlap ratios of disks in $Y(\sigma)$ are less than $\lambda$. Since disks in $Y(\sigma)$ are images of disks in $X(\sigma)$, we will use the same terminology of ``medium'' and ``large'' disks in $Y(\sigma)$. There will not be any ``small'' disks in $Y(\sigma)$ as those disks are collapsed to points.

    \vskip 5pt

    \noindent {\bf Fact 2:} If $\hat D$ is a medium disk in $Y(\sigma)$, then it has overlap ratio $o(\hat D) < \lambda$.

    \vskip 5pt

    The disk $\hat D$ is the image of a medium disk $D$ in $X(\sigma)$. Since the index of $D$ is greater than the critical index $I$, we have $o(D)<\lambda$. From property (3), the disk $D$ is disjoint from all the small disks. So the quotient map $q$ leaves $D$ and all edges incident to the curve $\partial D$ unchanged. It follows that $o(\hat D)=o(D) <\lambda$, as desired.

    This leaves us with checking the overlap ratio of large disks in $Y(\sigma)$. In order to do this, we need to give a lower bound on the length of the large disk, and an upper bound on the length of the overlaps in the large disk. As before, we let $\hat D$ be a large disk in $Y(\sigma)$, which is the image of a large disk $D$ in  $X(\sigma)$. The boundary $\partial \hat D$ is obtained from $\partial D$ by collapsing the subpaths that are images of overlaps with small disks. 
    
    Note that, since $K> \mathcal O$, the length of $\partial D$ exceeds the length of any of the overlaps in $X(\sigma)$. So if $({\bf p}, {\bf p'})$ is an overlap with ${\bf p} \subset \widetilde{\partial D}$, we can instead view ${\bf p}$ as an embedded path in $\partial D$. We know from Corollary \ref{cor-bound-on-number-of-overlaps} that there are only finitely many overlaps in $X(\sigma)$, so we can list out all the overlaps $({\bf p}, {\bf p'})$ between $\hat D$ (so ${\bf p} \subset \widetilde{\partial D}$) and small disks. This gives us a finite list of overlaps $\{ ({\bf p_1}, {\bf p_1'}), \ldots ({\bf p_k}, {\bf p_k'})\}$, cyclically ordered according to the initial vertex of the paths ${\bf p_i} \subset \partial D$. Each ${\bf p_i'}$ lies in $\widetilde{\partial D_i}$ where $D_i$ is a small disk in $X(\sigma)$. The boundary $\partial \hat D$ is obtained from $\partial D$ by collapsing the images of the intervals ${\bf p_i} $ in  $\partial D$ to points. 

    \vskip 5pt

    \noindent {\bf Fact 3:} For any large disk $D$, there are at most $\leq M^2I(R_L\mathcal O)^2$
    many overlaps with small disks, i.e. $({\bf p}, {\bf p'})$, where ${\bf p} \subset \widetilde{\partial D}$ and ${\bf p'}$ is contained in any of the small disks.

    \vskip 5pt

    Applying Lemma~\ref{lem-lifting-overlaps}, any such overlap covers an overlap $({\bf q}, {\bf q'})$ in $X$. There are at most $M$ small disks in $X(\sigma)$, hence at most $M$ images of small disks in $X$. Since each image disk $E$ in $X$ has at most $R_L\mathcal O$ paths that could serve as ${\bf q'}$, the total number of possible image overlaps $({\bf q}, {\bf q'})$ in $X$ is bounded above by $M(R_L\mathcal O)^2$. 
    
    Lastly, given a candidate image overlap $({\bf q}, {\bf q'})$ in $X$, we need to check how many pre-images in $X(\sigma)$ correspond to overlaps between $D$ and one of the small disks. The image of the common path ${\bf q'}$ in $X^{(1)}$ lifts to $n$ paths inside $X(\sigma)^{(1)}$, each of them lying on some lift of the disk $E$. However, we know that there are at most $\leq M$ lifts that are small disks, and as each of them have index $\leq I$, there are $\leq MI$ lifts of ${\bf q'}$ along small disks. Since there is a bijection between the lifts of ${\bf q'}$ and those of ${\bf q}$ (they define the same path in $X^{(1)}$), we see that the pair $({\bf q}, {\bf q'})$ has at most $\leq MI$ lifts that are overlaps between the given disk $D$ and one of the short disk lifts of $E$. Combining this with the estimate on the number of possible projected overlaps in $X$ from the previous paragraph, {\bf Fact 3} follows.

    From the upper bound on the number of overlaps, we can deduce a lower bound on the length of the boundary $\partial \hat D$ for the quotient disk. The other ingredient we will need is to compute an upper bound on the size of the overlaps for the quotient disk $\hat D$. We have:

    \vskip 5pt
    
    \noindent {\bf Fact 4:} Any overlap for $\hat D$ has length $\leq \big(M^2I(R_L\mathcal O)^2+1\big)\mathcal O$.

    \vskip 5pt

    To see this, let us consider an overlap between $\hat D$ and some other disk $\hat E$ in the quotient space $Y(\sigma)$. These are images of disks $D, E$ inside $X(\sigma)$, and we would like to relate the overlaps in $X(\sigma)$ between $D, E$ with those in $Y(\sigma)$ between $\hat D, \hat E$. Observe that if we have an overlap in $X(\sigma)$ with the property that the two edges preceding and following survive in the quotient space, then the image will be an overlap in $Y(\sigma)$. But if the preceding and/or following edges are in the subsets being collapsed, then we can potentially lose the ``witness'' to the start/end of the overlap. In that case, in the quotient space the overlap could continue, as the subsets where they differed can be collapsed down to points. This would result in a potentially longer overlap in $Y(\sigma)$, obtained by concatenating two overlaps in $X(\sigma)$ (see Figure \ref{fig-overlaps-concatenate}).

    Now the only way such a concatentation can occur is if the overlap in $X$ started and ended on part of a short disk. More precisely, along the disk $D$ we have a collection of paths ${\bf p_i} \subset \partial D$ that come from overlaps with small disks. From {\bf Fact 3} there are at most $\leq M^2I(R_L\mathcal O)^2$ such overlaps. So at most $\leq M^2I(R_L\mathcal O)^2+1$ concatenations can occur. Since overlaps in $X(\sigma)$ have length at most $\mathcal O$, {\bf Fact 4} follows.

    \begin{figure}[h!]
    \includegraphics[width=9cm,height=3.5cm]{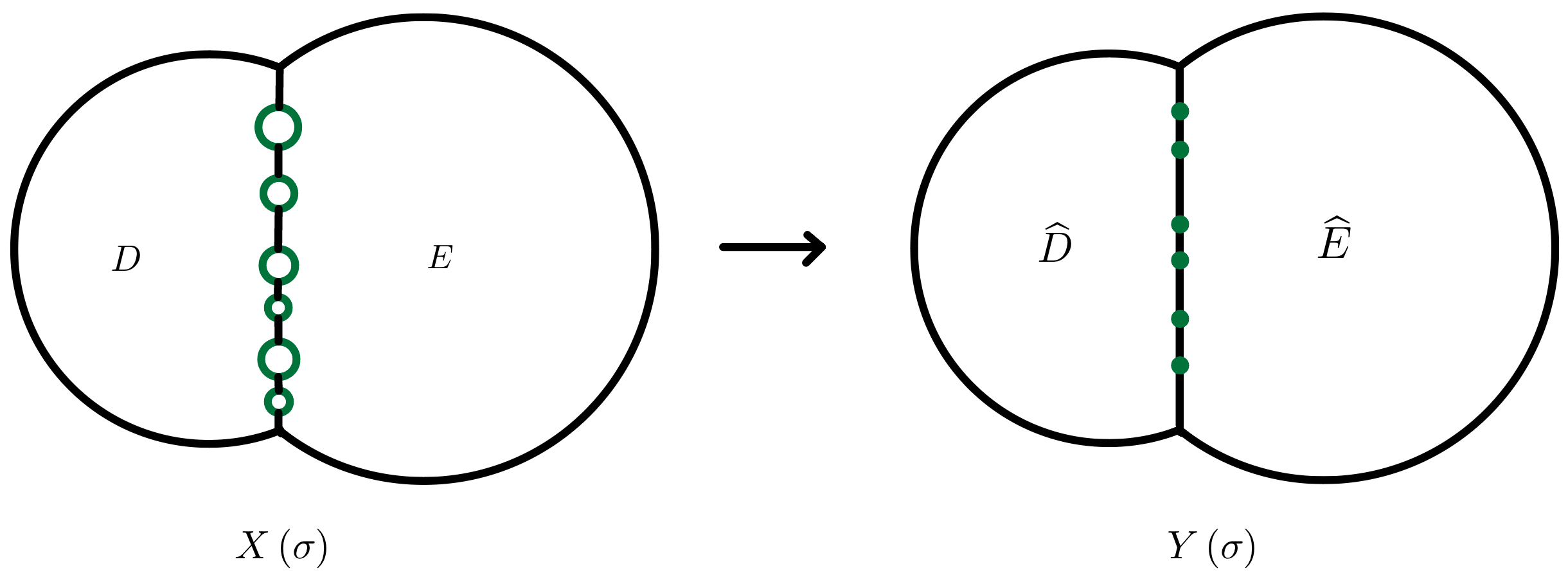}
    \caption{Collapsing small disks can concatenate overlaps in $X(\sigma)$ to give longer overlaps in $Y(\sigma)$}\label{fig-overlaps-concatenate}
    \end{figure}

    \vskip 5pt

    Finally, with {\bf Fact 3} and {\bf Fact 4} in hand, it is straightforward to estimate the overlap ratio of $\hat D$. Indeed, $\hat D$ is the image of the large disk $D$ in $X(\sigma)$ under the quotienting map. Since $D$ is a large disk in $X(\sigma)$, it is a branched cover of a disk in $X$, with index $\geq K$. So the length of $D$ is bounded below by $\geq KR_S$. When computing the length of $\hat D$, we see from {\bf Fact 3} that at most $\leq M^2I(R_L\mathcal O)^2$ overlaps with short disks get collapsed to points. Since each of these overlaps has length $\leq \mathcal O$, we get the lower bound
    \begin{align*}
            \ell (\partial \hat D) &\geq KR_S - M^2I(R_L\mathcal O)^2\mathcal O\\
            &\geq \lambda^{-1}\mathcal OM^2I(R_L\mathcal O)^2 + \lambda^{-1}\mathcal O
    \end{align*}
    where the second inequality follows from the chosen value of $K$, see Equation (\ref{eqn-K}). Now using the estimate for the overlap length from {\bf Fact 4} we get:
    $$o(\hat D) \leq \frac{\big(M^2I(R_L\mathcal O)^2+1\big)\mathcal O}{\lambda^{-1}\mathcal OM^2I(R_L\mathcal O)^2 + \lambda^{-1}\mathcal O} =\lambda$$
    as desired. Since this estimate holds for any large disk in our $X(\sigma)$ satisfying conditions (1), (2), (3), it concludes the proof of the theorem.
    
\end{proof}

\section{Concluding Remarks}

We end our paper with some general remarks on topics related to our random model and our main theorem.

\vskip 10pt

\subsection{Multiple vertex case}\label{sec:multiple-vertex-case}

The attentive reader will notice that our {\bf Main Theorem} is stated for acceptable $2$-complexes, but that in our proofs we work exclusively with the special case of an acceptable {\it presentation} $2$-complex. In fact, the two statements are equivalent, as we now explain.

Given an arbitrary finite acceptable $2$-complex $X$, we can take a spanning tree $T$ in the $1$-skeleton of $X$, and create a new $2$-complex $Z$ by collapsing $T$ to a point. By construction, the $1$-skeleton $Z^{(1)}$ is a bouquet of circles, so $Z$ has a single vertex. The quotient map $\phi: X\rightarrow Z$ is a homotopy equivalence, since it is obtained by collapsing the contractable set $T$. Each polygonal $2$-cell in $X$ gives rise to a polygonal $2$-cell in $Z$. Moreover, the restriction of $\phi$ to the $1$-skeleton is a homotopy equivalence between $X^{(1)}$ and $Z^{(1)}$, so provides an isomorphism $\phi_\#: \pi_1(X^{(1)}) \rightarrow \pi_1(Z^{(1)})$ . It follows that an attaching map $\alpha$ for a disk in $X$ is a proper power in $\pi_1(X^{(1)})$ if and only if the corresponding attaching map $\phi\circ \alpha$ for a disk in $Z$ is a proper power. Similarly a pair of disks have identical attaching map in $\pi_1(X^{(1)})$ if and only if the corresponding attaching maps in $\pi_1(Z^{(1)})$ are identical. This shows that $Z$ is also an acceptable $2$-complex, but with a single vertex, so can be viewed as a presentation $2$-complex. 

Finally, our model for random branched covers of $Z$ are obtained by taking ordinary degree $n$ covers of the $1$-skeleton $Z^{(1)}$, and inducing a branched cover by attaching disks along all the connected lifts of an attaching map (see Section \ref{subsec:brcov} and Section \ref{subsec:random-model}). From covering space theory, all information on lifting is encoded in the fundamental group of the $1$-skeletons. Hence the group isomorphism $\phi_\#$ allows you to obtain a corresponding finite cover of $X^{(1)}$, and a homotopy equivalence between this finite cover and the $1$-skeleton of the branched cover $Z(\sigma)$. Under this homotopy equivalence, we can transfer the lifts of the attaching maps to the finite cover of $X^{(1)}$ and form a corresponding branched cover $X(\sigma)$ of $X$. By construction, there is then an induced homotopy equivalence $X(\sigma) \simeq Z(\sigma)$. It follows that topological results about the random model can be transferred from the acceptable presentation $2$-complex case to the general case of acceptable polygonal $2$-complexes.

\vskip 10pt
\subsection{Non-acceptable $2$-complexes}\label{subsec:non-acceptable}

In Section \ref{subsec:random-model}, we discussed a possible extension of our random model to general $2$-complexes (not necessarily acceptable). To understand the topology of a random branched cover in the non-acceptable case, the following observation will be useful.

\begin{lemma}\label{lem:pi2}
    Let $X$ be a $2$-complex, where a pair of $2$-cells are attached along homotopic maps $\alpha \simeq \beta : S^1\rightarrow X^{(1)}$. Then $\pi_2(X)\neq 0$, so $X$ is not aspherical.
\end{lemma}

\begin{proof}
    Up to homotopy equivalence, we can assume that $\alpha = \beta$, and that these are the last two disks attached. Denote by $X_\beta$ the space obtained from $X$ by removing the disk attached along $\beta$. Then the attaching map $\beta$, viewed as a map into $X_\alpha$, can be homotoped across the disk attached along $\alpha$ to a point map. It follows that $X$ is homotopy equivalent to the join $X_\beta \vee S^2$. Applying the $\pi_2$-functor, and noting that $S^2$ is a retract of $X_\beta \vee S^2$, we see that 
    $$\mathbb Z \cong \pi_2(S^2) \hookrightarrow \pi_2(X_\beta \vee S^2)\cong \pi_2(X)$$
    establishing the result.
\end{proof}    

For the random model with multiplicity, any disk whose attaching map is a proper power $\alpha ^k$ ($\alpha$ not a proper power, and $k\geq 2$) can give rise to multiple disks in $X(\sigma)$ attached along the same lifted map $\tilde \alpha$. This occurs anytime the degree $d$ of the lifted map $\tilde \alpha \rightarrow \alpha$ has a divisor in common with the power $k$. In those situations, Lemma \ref{lem:pi2} applies and guarantees that $X(\sigma)$ is not aspherical. Note that the degrees $d$ of the lifted map are the lengths of the cycles in the image of the word map $\alpha(\sigma)$. Since $\alpha$ is not a proper power, the permutations $\alpha(\sigma)$ are uniformly distributed in the symmetric group as $\sigma$ ranges over all tuples of permutations. In particular, for $n$ sufficiently large, there is a positive proportion of $\sigma$ that will produce a permutation $\alpha(\sigma)$ containing a $k$-cycle. This gives us the following

\begin{cor}
    Assume the $2$-complex $X$ has a disk where the attaching map is a proper power $\alpha^k$ in $\pi_1(X^{(1)})$. Then for $n$ large, a positive proportion of the random branched covers $X(\sigma)$ in the random model with multiplicity will satisfy $\pi_2(X(\sigma)) \neq 0$.
\end{cor}

\vskip 10 pt

\subsection{Effect on the fundamental group}

\begin{lemma}
    If $\rho: \bar X \rightarrow X$ is a branched covering map between finite, connected $2$-complexes, then the induced map $\rho_\sharp$ maps $\pi_1(\bar X)$ onto a finite index subgroup of $\pi_1(X)$.
\end{lemma}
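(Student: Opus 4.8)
The plan is to exploit the fact that, away from the branch locus, $\rho$ restricts to an honest covering map, and to recover the fundamental group of $\bar X$ from this restriction together with the (simple, local) behaviour of $\rho$ near branch points. First I would set $B\subset X$ to be the branch locus, a finite set of points lying in the interiors of $2$-cells, and let $\bar B = \rho^{-1}(B)$, which is again finite. Write $X^\circ = X\setminus B$ and $\bar X^\circ = \bar X \setminus \bar B$. By definition of a branched cover, $\rho|_{\bar X^\circ}: \bar X^\circ \to X^\circ$ is an ordinary covering map of finite degree $n$, so $\rho_\sharp \pi_1(\bar X^\circ)$ has index $\le n$ in $\pi_1(X^\circ)$; in particular it is finite index.

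The key step is to compare $\pi_1(\bar X^\circ)$ with $\pi_1(\bar X)$, and likewise $\pi_1(X^\circ)$ with $\pi_1(X)$, and show the inclusions induce surjections on $\pi_1$. Since $B$ consists of finitely many points in the interiors of $2$-cells, $X^\circ$ is obtained from $X$ by deleting finitely many interior points of discs; equivalently $X$ is obtained from $X^\circ$ by coning off finitely many small circles (the boundaries of deleted disc-neighbourhoods of the points of $B$), i.e. by attaching $2$-cells. Attaching $2$-cells only adds relations and never changes generators, so the inclusion $X^\circ \hookrightarrow X$ induces a surjection $\pi_1(X^\circ) \twoheadrightarrow \pi_1(X)$; the same argument applied upstairs gives a surjection $\pi_1(\bar X^\circ)\twoheadrightarrow \pi_1(\bar X)$. (One can also phrase this via van Kampen, or via the fact that removing a point from the interior of a $2$-disc, then re-gluing, is exactly a cellular attachment.) Then in the commutative square induced by $\rho$ and the inclusions, the composite $\pi_1(\bar X^\circ)\to\pi_1(X^\circ)\to\pi_1(X)$ has image of finite index in $\pi_1(X)$ — because $\pi_1(\bar X^\circ)\to\pi_1(X^\circ)$ has finite-index image and $\pi_1(X^\circ)\to\pi_1(X)$ is onto — while this composite also equals $\pi_1(\bar X^\circ)\twoheadrightarrow\pi_1(\bar X)\xrightarrow{\rho_\sharp}\pi_1(X)$. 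Since the first map in the latter factorization is surjective, the images of $\rho_\sharp$ and of the composite coincide, so $\rho_\sharp\pi_1(\bar X)$ has finite index in $\pi_1(X)$, as desired.

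The main obstacle, and the point requiring the most care, is verifying that $X$ really is obtained from $X^\circ$ by attaching $2$-cells along the small linking circles of the branch points — in other words, that a punctured disc neighbourhood of a branch point deformation retracts onto its boundary circle and that re-gluing the missing open disc is a cellular attachment up to homotopy. This is elementary when a $2$-cell contains a single branch point, where its preimage components are discs; when a $2$-cell of $X$ contains several branch points the local picture in $\bar X$ is a planar surface with several boundary circles rather than a disc, but the downstairs statement about $X$ only involves a single branch point at a time, so coning off the linking circles one at a time still works. A clean way to package this is to choose disjoint closed disc neighbourhoods $U_b$ of each $b\in B$, note $X = X^\circ \cup \bigcup_b U_b$ with $X^\circ \cap U_b$ an annulus, and apply van Kampen inductively; each step kills (at most) the class of a linking circle and adds no generators. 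The analogous step upstairs, which I would also need, is the only place where the non-connectedness of $\rho^{-1}(B)$ and of the disc preimages must be tracked, but again it only ever adds relations, never generators.
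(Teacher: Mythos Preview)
Your argument is correct, but the paper takes a shorter route. Instead of removing the branch locus, the paper uses the $1$-skeleton: the inclusion $\iota:X^{(1)}\hookrightarrow X$ induces a surjection $\pi_1(X^{(1)})\twoheadrightarrow\pi_1(X)$ (immediate for CW-complexes), and since the branched cover restricts to an honest finite cover on $1$-skeletons, $\rho_\sharp\pi_1(\bar X^{(1)})$ is finite index in $\pi_1(X^{(1)})$. Pushing this forward by $\iota_\sharp$ gives a finite-index subgroup of $\pi_1(X)$ contained in $\rho_\sharp\pi_1(\bar X)$, and you are done. This is the same commutative-square idea as yours, just with $X^{(1)}\subset X$ playing the role of $X^\circ\subset X$.

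What your approach buys is that it never touches the CW structure and works for any reasonable notion of branched cover over isolated points; what it costs is the discussion in your final paragraph about re-attaching discs near branch points. That discussion, incidentally, is more cautious than necessary: both $B$ and $\bar B$ are finite sets of points each admitting a small disc neighbourhood (downstairs because $B$ sits in open $2$-cells, upstairs because the local model $z\mapsto z^m$ is a map of discs), so in both cases re-inserting the deleted points is exactly attaching $2$-cells along linking circles, regardless of how many branch points lie in a single $2$-cell of $X$ or whether preimages of whole $2$-cells are discs. The paper's $1$-skeleton argument sidesteps this entirely.
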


\begin{proof}
    We know that the inclusion of the $1$-skeleton $\iota:X^{(1)}\hookrightarrow X$ induces a surjection from a finite rank free group $\iota_\sharp:\pi_1(X^{(1)}) \twoheadrightarrow \pi_1(X)$. The branched covering map $\rho$ restricts to a covering map on the $1$-skeletons, so we have a well-defined finite index subgroup $\rho_{\sharp}\big(\pi_1(\bar X^{(1)})\big) \leq \pi_1(X^{(1)})$. The $\iota_\sharp$-image of this subgroup will be finite index in $\pi_1(X)$, and will be contained in the $\rho_\sharp\big(\pi_1(\bar X)\big)$, completing the proof.
\end{proof}

In the special case where $X$ is a presentation $2$-complex associated to a finitely presented group $\Gamma$, we see that $\pi_1(\bar X)$ surjects onto a finite index subgroup of $\Gamma$. We do not expect, in general, to have good finiteness properties on the kernel of this map, in contrast with e.g. the Rips construction. Nevertheless, one can wonder whether geometric or topological properties of the group $\Gamma$ can be inherited by the group $\pi_1(\bar X)$. For example, Corollary \ref{lemma:sc} tells us that if the presentation $2$-complex $X$ satisfies geometric small cancellation, then so does the branched cover $\bar X$.

\vskip 10pt

\subsection{Non-uniform measures}

In our model for random branched covers, we always use the sequence of uniform measures on the symmetric groups $\Sym(n)$. It is reasonable to consider the branched covers associated to other sequences of measures on the symmetric groups, and to see if one can obtain other types of prescribed behavior for the corresponding random branched covers. For example, if you start with an $X$ that is {\bf not} small cancellation, can one bias the measures to guarantee that a random branched cover remains asymptotically almost surely not small cancellation? Or would that force the sequence of measures to concentrate support on some small set of permutations (e.g. looking more and more like Dirac measures)? Since the attaching maps in the branched covers are determined by word maps associated to the relators, these questions will likely require finer understanding of how word maps interact with measures on the symmetric group.

\bigskip

\bibliographystyle{alpha}
\bibliography{Bibliography}

\end{document}